\documentclass[sort&compress,preprint,3p]{elsarticle}
\usepackage[T1]{fontenc}
\usepackage{lmodern}
\biboptions{}
\usepackage{amsthm, amssymb, amsmath, amsfonts, mathtools, thmtools}
\usepackage{tikz-cd, graphicx}
\usetikzlibrary{arrows}
\usepackage{floatrow}
\usepackage[labelformat=empty]{caption}
\usepackage[shortlabels]{enumitem}
\usepackage{bbm}
\usepackage{hyperref}
\usepackage[nameinlink]{cleveref}
\usepackage{stmaryrd} 
\usepackage{todonotes}

\newcommand\myshade{85}
\definecolor{mycitecolor}{rgb}{0.94, 0.8, 0.0}
\definecolor{mylinkcolor}{rgb}{0.56, 0.0, 1.0}
\definecolor{myurlcolor}{rgb}{0.5, 1.0, 0.83}
\hypersetup{
  linkcolor  = mylinkcolor!\myshade!black,
  citecolor  = mycitecolor!\myshade!black,
  urlcolor   = myurlcolor!\myshade!black,
  colorlinks = true,
}

\makeatletter
\def\namedlabel#1#2{\begingroup
    #2%
    \def\@currentlabel{#2}%
    \phantomsection\label{#1}\endgroup
}

\declaretheorem[name=Theorem,refname={Theorem,Theorems},Refname={Theorem,Theorems},numberwithin=section,]{theorem}
\declaretheorem[name=Proposition,refname={Proposition,Propositions},Refname={Proposition,Propositions},sibling=theorem,]{proposition}
\declaretheorem[name=Lemma,refname={Lemma,Lemmas},Refname={Lemma,Lemmas},sibling=theorem,]{lemma}
\declaretheorem[name=Corollary,refname={Corollary,Corollaries},Refname={Corollary,Corollaries},sibling=theorem,]{corollary}


\declaretheoremstyle[  spaceabove=2pt, spacebelow=2pt,
  bodyfont=\normalfont\itshape,
 postheadspace=1em
]{claimnonum}
\declaretheorem[name=Claim,numbered=no,style=claimnonum]{claim*}

\declaretheorem[name=Definition,refname={Definition,Definitions},Refname={Definition,Definitions},sibling=theorem,style=definition,]{definition}
\declaretheorem[name=Notation,refname={Notation,Notations},Refname={Notation,Notations},sibling=theorem,style=definition,]{notation}

\declaretheorem[name=Remark,refname={Remark,Remarks},Refname={Remark,Remarks},sibling=theorem,style=remark,]{remark}
\declaretheorem[name=Remark,style=remark,numbered=no,]{remark*}

\newcommand{\2}{\mathbbm{2}}

\newcommand{\df}{\coloneqq}

\newcommand{\epi}{\twoheadrightarrow}
\newcommand{\repi}{\rightarrowtriangle}
\newcommand{\mono}{\rightarrowtail}
\newcommand{\rmono}{\hookrightarrow}

\newcommand{\C}{\mathsf{C}} 
\newcommand{\MV}{{\mathsf{MV}}} 
\newcommand{\MVlf}{{\mathsf{MV}_\mathsf{lf}}} 
\newcommand{\MS}{\mathsf{MS}} 
\newcommand{\BS}{\mathsf{BS}}
\newcommand{\BA}{\mathsf{BA}}
\newcommand{\Set}{\mathsf{Set}}
\newcommand{\Top}{\mathsf{Top}}

\newcommand{\GG}{\ensuremath{\mathcal G}}
\newcommand{\NN}{\ensuremath{\mathcal N}}

\newcommand{\N}{\mathbb{N}} 
\newcommand{\Nnot}{\N^+} 
\renewcommand{\P}{\mathbb{P}} 
\newcommand{\Q}{\mathbb{Q}}

\newcommand{\Dn}{\mathrm{D}_{n}}
\newcommand{\D}[1]{\mathrm{D}_{{#1}}}

\newcommand{\Quot}{\mathbf{Q}} 
\newcommand{\CS}{\mathbf{R}} 

\newcommand{\seq}{\subseteq}
\newcommand{\eps}{\varepsilon}

\begin{document}

\title{Are locally finite MV-algebras a variety?}
\author[1]{Marco~Abbadini}
\ead{mabbadini@unisa.it}
\author[1]{Luca~Spada\corref{cor1}}
\ead{lspada@unisa.it}

\affiliation[1]{
organization={Dipartimento di Matematica, Universit\`a degli Studi di Salerno}, 
addressline={Piazza Renato Caccioppoli, 2},
postcode={84084}, 
city={Fisciano (SA)}, 
country={Italy}
}
\cortext[cor1]{Corresponding author}

\begin{abstract}
We answer Mundici's problem number 3 (D.\ Mundici.\ \emph{Advanced {\L}ukasiewicz calculus.} Trends in Logic Vol.\ 35.\ Springer 2011, p.\ 235): \emph{Is the category of locally finite MV-algebras equivalent to an equational class?}  We prove:
\begin{enumerate}
\item The category of locally finite MV-algebras is not equivalent to any finitary variety.
\item More is true: the category of locally finite MV-algebras is not equivalent to any finitely-sorted finitary quasi-variety.
\item The category of locally finite MV-algebras is equivalent to an infinitary variety; with operations of at most countable arity.
\item The category of locally finite MV-algebras is equivalent to a countably-sorted finitary variety.
\end{enumerate}
Our proofs rest upon the duality between locally finite MV-algebras and the category of ``multisets'' by R.\ Cignoli, E.\ J.\ Dubuc and D.\ Mundici, and known categorical characterisations of varieties and quasi-varieties.  In fact, no knowledge of MV-algebras is needed, apart from the aforementioned duality.
\end{abstract}
%

\maketitle

\section{Introduction}
\label{s:introduction}
MV-algebras were introduced in \cite{chang1958algebraic} to serve as algebraic semantics for the many-valued {\L}ukasiewicz propositional logic.  They are defined as algebras of the form $\langle A, \oplus, \neg, 0\rangle$ such that $\langle A,\oplus, 0\rangle$ is a monoid, the unary operation $\neg$ satisfies $\neg\neg x=x$ and $\neg 0\oplus x=\neg 0$, and the following axiom holds: $\neg(\neg x\oplus y)\oplus y)=\neg(\neg y\oplus x)\oplus x)$ ---commutativity of $\oplus$ follows from the other axioms \cite{Kolarik2013}.
In their over fifty years of history, MV-algebras have found surprising applications in many fields of mathematics (see, e.g., \cite{cabrer2017classifying,cicalese2011recent,marra2007lebesgue,marra2017stone,mundici2017fans,mundici2018word}); the reader is referred to \cite{zbMATH01382786} for their basic theory and to \cite{Mun2011} for more advanced topics.   Examples of MV-algebras are given by the real interval $[0,1]$ endowed with the operations:
\begin{align}\label{eq:standard}
x\oplus y\df \min\{x+y,1\} \text{ and } \neg x\df 1-x,
\end{align}
as well as  its subalgebra of rational numbers, and the ones generated by any rational number; the latter all take the form $S_{n}\df\{0,\frac{1}{n},\frac{2}{n}\dots,\frac{n-1}{n},1\}$ for some $n\in \Nnot\df\{1,2,3,\dots\}$.  It is important to notice at this point that the MV-operations $\oplus$ and $\neg$ always allow to define a lattice order by setting
\[x\leq y \text{ if and only if } \neg(\neg x\oplus y)\oplus y=y. \]

An MV-algebra is called a \emph{chain} if it is linearly ordered under the above-defined order.  The finite MV-chains are exactly the algebras $S_{n}$ for $n \in \Nnot$ and every finite MV-algebra is a direct product of finite chains \cite[Proposition 3.6.5]{zbMATH01382786}. This simple observation provides an intuitive entryway to a rich front in MV-algebras: their duality theory.  Indeed, any finite MV-chain is completely characterised by the natural number $n$ as above.  Thus, every finite MV-algebra can be recovered from a finite set in which every element has attached a natural number different from 0, i.e.\ from a \emph{finite multiset}.  We call these numbers the denominators (somewhere else called multiplicities) of the elements, for reasons that will be made clear in \cref{r:duality}.  Furthermore, a homomorphism exists from an MV-chain $S_{m}$ to an MV-chain $S_{n}$ if and only if $m$ divides $n$.  Therefore, we will consider maps between multisets that decrease ---in the order of divisibility--- denominators, more precisely: if $(S_{1}, d_{1} \colon S_{1} \to \Nnot)$ and $(S_{2},d_{2}\colon S_{2}\to \Nnot)$ are two finite multisets, a function $f\colon S_{1}\to S_{2}$ is said to \emph{decrease denominators} if, for any $s\in S_{1}$, $d_{2}(f(s))$ divides $d_{1}(s)$. This correspondence is in fact a categorical equivalence between the category $\MV_{\mathsf{f}}$ of finite MV-algebras, together with their homomorphisms,  and the category $\MS_{\mathsf{f}}$ of finite multisets and functions that decrease denominators.

As often happens, the ``toy'' duality described above hints at a more interesting one that is obtained by taking, on the one hand, the completion of $\MV_{\mathsf{f}}$ under all directed co-limits ---called $\mathsf{ind\text{-}}\MV_{\mathsf{f}}$--- and, on the other, the completion of $\MS_{\mathsf{f}}$ under all co-directed limits ---called $\mathsf{pro\text{-}}\MS_{\mathsf{f}}$--- (see \cite[Chapter~6]{Joh86} for more details and applications of this technique).  Indeed, one immediately obtains a formal categorical duality: $\mathsf{ind\text{-}}\MV_{\mathsf{f}}\simeq (\mathsf{pro\text{-}}\MS_{\mathsf{f}})^{\text{op}}$.
Recall that, as a general concept in universal algebra, an algebra $A$ is called \emph{locally finite} if every finitely generated subalgebra of $A$ is finite. In any variety $V$, the locally finite algebras are exactly the direct limits (= directed co-limits) of finite algebras in $V$.  Thus, $\mathsf{ind\text{-}}\MV_{\mathsf{f}}$ is equivalent to the category $\MVlf$ of locally finite MV-algebras and it remains to provide a more familiar description of  $\mathsf{pro\text{-}}\MS_{\mathsf{f}}$.  This is done in \cite{CigDubMun04}, where the authors prove that a certain category $\MS$ of ``multisets'' (with possibly infinite underlying set and possibly infinite denominator function) is the pro-completion of the category of finite multisets (see \cref{d:multiset} for details).

As a matter of fact, some MV-algebras are not locally finite, the MV-algebra $[0,1]$ with the operations defined in \eqref{eq:standard} being a prime example.  Indeed, it can be easily seen that any subalgebra of $[0,1]$ generated by an irrational number is infinite.  Therefore, locally finite MV-algebras are a proper subclass of MV-algebras.  The class of locally finite MV-algebras is easily seen to be closed under homomorphic images, subalgebras, and finite products.  However, it is not closed under arbitrary products.  Nonetheless, the category of locally finite MV-algebras has all products in the categorical sense and they can be described as certain subalgebras of the classical algebraic product \cite[Theorem 5.4]{CigDubMun04}.  Driven by these considerations, in one of the eleven problems at the end of \cite{Mun2011}, D.\ Mundici asks whether the category of locally finite MV-algebras is equivalent to an equational class.  We point out that if such an equivalence exists it is necessarily not concrete, because the underlying set functor from $\MVlf$ to $\Set$ does not preserve products. 

We answer Mundici's question using categorical results that give precise descriptions of the categories that are equivalent to varieties of algebras.  It turns out that Mundici's question can be answered both in the negative and in the positive, depending on the restrictions on the language that one wants to assume.

The study of abstract characterisations of varieties and quasi-varieties has a long history in category theory, starting with the works of M.\ Barr \cite{barr1966acyclic}, J.\ Beck \cite{Beck:1967}, F.\ W.\ Lawvere \cite{Lawvere:1963} and F.\ E.\ J.\ Linton \cite{Linton:1966} in the 1960s and improved by many others. The book \cite{AlgThe} provides an updated account on the subject.

 We conclude this section by briefly describing our results and the techniques we use to prove them.

\Cref{s:preliminaries} includes the definition and some basic information on the category $\MS$. A presentation of the results about algebraic categories needed in the paper is given.

In \Cref{s:finite-language} we prove that the category $\MVlf$ of locally finite MV-algebras is not equivalent to any variety of finitary algebras (\cref{c:locally-finite-are-not-finitary-variety}). Our argument proceeds along the following lines. Every variety of finitary algebras admits a ``finitely generated regular generator'' (see \cref{d:regular-generator}): the free algebra over one generator (see \cref{t:char-quasi-varieties}).
We show that $\MVlf$ does not admit any finitely generated regular generator by proving a dual statement for $\MS$: $\MS$ does not admit any finitely co-generated regular co-generator.  Indeed we prove that, in $\MS$,
\begin{enumerate}[(T1)]
	\item \label{i:fin-co-gen-implies-fin}
		finitely co-generated objects have a finite underlying set (\cref{p:how-finitely-cogenerated});
	\item \label{i:reg-co-gen-implies-infin}
		regular co-generators have an infinite underlying set (\cref{c:reg-cogen}).
\end{enumerate}
\Cref{i:fin-co-gen-implies-fin,i:reg-co-gen-implies-infin} together imply that no object of $\MS$ is simultaneously finitely co-generated and a regular co-generator, thus proving that the opposite category $\MVlf$ does not admit any finitely generated regular generator.  We conclude that $\MVlf$ is not equivalent to any variety of finitary algebras.
Since every quasi-variety of finitary algebras admits a finitely generated regular generator, too, the argument above implies the stronger fact that  $\MVlf$ is not equivalent to any quasi-variety of finitary algebras, either.

In fact, our result is even more general: no finitary language with finitely many sorts affords a (quasi-)equational axiomatisation of $\MVlf$ (\cref{t:locally-finite-are-not-finitely-sorted-quasi-variety}). The ideas are similar: 
Every finitely sorted quasi-variety of finitary algebras admits a regularly generating finite set of finitely generated objects (these objects are ---roughly speaking--- one for each sort: they correspond to the free algebra over an element placed in that particular sort).
We have:
\begin{enumerate}[(T1)]\setcounter{enumi}{2}
	\item \label{i:reg-co-gen-implies-infin-more-sorts}
		a regularly co-generating set either has infinitely many objects or contains an object whose underlying set is infinite (\cref{t:how-reg-cogen}).	
\end{enumerate}
\Cref{i:fin-co-gen-implies-fin,i:reg-co-gen-implies-infin-more-sorts} together imply that $\MS$ admits no regularly co-generating finite set of finitely co-generated objects; thus $\MVlf$ is not equivalent to any finitely sorted quasi-variety of finitary algebras.

In \Cref{s:infinite-language} we exhibit a countable family of regular injective multisets that is a regularly co-generating abstractly co-finite set. Using once more the categorical characterisation outlined in \cref{t:char-quasi-varieties}, we deduce that $\MVlf$ is equivalent to a quasi-variety of finitary algebras in a language with countably many sorts (\cref{p:sorted-quasi-variety}).  At this point, we note that the product of the aforementioned family is a regular injective regular co-generator of $\MS$. We deduce that $\MVlf$ is equivalent to a quasi-variety of algebras in a language with operations of at most countable arity (\cref{p:MVlf-quasi-variety}).

In \Cref{s:from-quasi-variety-to-variety}, we improve the results of the previous section by studying the (internal) equivalence relations of $\MVlf$ (see \cref{d:equivalence-relation} for a short reminder of their definition).
Working once more in the dual category $\MS$, we prove that all reflexive relations in $\MVlf$ are effective equivalence relations;
thus $\MVlf$ is a Mal'tsev category (i.e.\ it has finite limits and every reflexive relation is an equivalence relation) and every equivalence relation in $\MVlf$ is effective.
Using once more the abstract characterisation of \cref{t:char-quasi-varieties} we deduce that $\MVlf$ is equivalent to a variety in a language:
\begin{enumerate}[(T1)]\setcounter{enumi}{3}
\item with one sort and operations of at most countable arity (\cref{t:MVlf-variety}), and 
\item with countably many sorts and operations of finite arity (\cref{t:sorted-variety}).
\end{enumerate} 
Finally, in the Appendix, we offer some supplementary results on the topology of the supernatural numbers (to be defined in \Cref{ss:lf-mv-algebras}) and on internal relations in $\MVlf$.  Although these results are not needed in the proofs outlined above, we think that they might be interesting in the study of the category $\MS$.

\section{Preliminaries}
\label{s:preliminaries}

\begin{notation}
We write $\N$ for the set of natural numbers $\{0,1,2, \dots\}$, $\Nnot$ for $\{1,2,3,\dots\}$ and  $\P$ for the set of prime numbers. Throughout the paper, we assume all categories to be locally small. 
Unless otherwise stated, varieties and quasi-varieties possibly admit operations of infinite arity in their signatures. As customary, we use the prefix ``co-'' for the dual of a categorical concept, obtained by switching the direction of any arrow involved in the definition; we depart from this notation only when the dual notion is traditionally indicated with a different name (e.g., monic and epic).
\end{notation}
As customary, we call \emph{Boolean space} any topological space that is compact, Hausdorff, and has a basis of clopen subsets.  We denote by $\BS$ the category of Boolean spaces with continuous maps and by $\BA$ the category of Boolean algebras and their homomorphisms. In the paper we only assume some basic knowledge on Boolean spaces, Boolean algebras, and their duality; for more information, the reader is referred to \cite{givant2008introduction,Kop89} ---in particular, Stone duality between $\BA$ and $\BS$ is \cite[Theorem 8.2]{Kop89}.

\subsection{Locally finite MV-algebras and multisets}\label{ss:lf-mv-algebras}

In \cite{CigDubMun04}, the authors give a description of $\mathsf{pro}\text{-}\MS_{\mathsf{f}}$  in terms of ``possibly infinite multisets''.  It turns out that infinity in $\mathsf{pro}\text{-}\MS_{\mathsf{f}}$ appears in two different aspects: infinite cardinality of the underlying set and infinitely-valued denominators.

\begin{definition}
A \emph{supernatural number} is a function
\[
\nu\colon \P\longrightarrow \N\cup\{\infty\}.
\]
Upon writing $\nu\leq \mu$ iff $\nu(p)\leq \mu(p)$ for each $p\in\P$, the set of supernatural numbers forms a complete lattice denoted by $\NN$.
We say that $\nu$ is \emph{finite} iff $\infty$ does not belong to the range of $\nu$ and $\nu(p)$ is nonzero only for finitely many $p$.
\end{definition}
Regarding $\nu$ as assigning exponents to prime numbers and using the Unique Factorisation Theorem, we may assign to each number $n\in\Nnot$ a corresponding finite supernatural number $\nu_n$. In particular, $\nu_1$ constantly takes value $0$ on every prime number.
The one-one correspondence $n\leftrightarrow \nu_n$ identifies the sub-lattice of finite supernatural numbers with the  lattice $(\Nnot, \mathrm{div})$, where $a \mathrel{\mathrm{div}} b$ if and only if $a$ divides $b$. 

We equip $\NN$ with the topology induced by the following open basis:
\begin{align}\label{e:basis-top-N} 
U_n\df \{\nu\in \NN\mid\nu\geq \nu_n\} \text{ for }n\in \Nnot
\end{align}
or equivalently by the open sub-basis of all sets of the form
\begin{align*}
U_{p,k} = \{\nu \in \NN \mid \nu(p) > k\}, \text{ for } p \in \P\text{ and } k \in \N.
\end{align*}
The definition of the basis differs from the one proposed in \cite{CigDubMun04} ---which probably contains a typo--- but agrees with the one in \cite[Section 8.4]{Mun2011}.  We postpone to the Appendix a discussion on the various possible formulations and their equivalences.

Supernatural numbers (sometimes called generalised natural numbers or Steinitz numbers) were introduced by E.\ Steinitz in 1910 under the name of ``$G$-Zahlen'' (where $G$ stands for ``Grad'') as a part of his study of the Galois group of field extensions of possibly infinite degree \cite[Section~16, p.\ 250]{Steinitz1910}.
The lattice of supernatural numbers can be regarded as the ideal-completion of the lattice $(\Nnot, \mathrm{div})$ of positive integers equipped with the divisibility order, where by ideal we mean here a up-directed downward-closed subset.

In the literature, supernatural numbers arise especially in the study of groups, since the divisibility relation between positive integers comes into play (cf., for example, Lagrange's theorem).
In the context of profinite groups (such as the Galois groups considered by Steinitz), supernatural numbers are used to define the order of a profinite group and the index of a closed subgroup of a profinite group, see \cite[Chapter~1, Section~2]{Shatz1972} or \cite[Chapter~1, Section~1.3]{Serre1994}.
The lattice of supernatural numbers is isomorphic to the lattice of additive subgroups of $\Q$ containing $1$, ordered by inclusion; for this reason, a quotient of the set of supernatural numbers is used to classify isomorphism classes of Abelian torsion-free groups of rank $1$, see \cite{Baer1937} or \cite[Chapter~XIII, Section~85]{Fuchs1973}.
Using similar ideas, J.\ G.\ Glimm showed in \cite[Theorem 1.12]{zbMATH03153447} that it is possible to associate to every uniformly hyperfinite C$^{*}$-algebra $A$ a supernatural number that completely characterises the isomorphism type of $A$.

\begin{definition}\label{d:multiset}
We define the category $\MS$ of \emph{multisets}.  Objects are pairs $(X,\zeta)$, where $X$ is a Boolean space, and $\zeta$ is a continuous map from $X$ to $\NN$, equipped with the topology defined in \eqref{e:basis-top-N}.
If $(X,\zeta_X)$ and  $(Y,\zeta_Y)$ are two multisets, an arrow between them is a continuous function $f\colon X\to Y$ such that, 
\begin{align}\label{eq:respect-denominators}
\text{for every  }x\in X,\quad  \zeta_X(x)\geq \zeta_Y(f(x)).
\end{align}
\end{definition}
When more than one multiset is involved in an argument we write $\zeta_X$ to make it clear that we refer to the map associated with the multiset $X$.
\begin{theorem}[\mbox{\cite[Theorem 6.8]{CigDubMun04}}]\label{t:duality}
The categories $\MVlf$ and $\MS$ are dually equivalent.
\end{theorem}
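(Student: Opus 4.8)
The plan is to establish \Cref{t:duality} not by constructing directly the (necessarily non-concrete) functors between $\MVlf$ and $\MS$, but by factoring the dual equivalence through the formal completions introduced above. Concretely, I would exhibit the chain of equivalences
\[
\MVlf \;\simeq\; \mathsf{ind\text{-}}\MV_{\mathsf{f}} \;\simeq\; (\mathsf{pro\text{-}}\MS_{\mathsf{f}})^{\op} \;\simeq\; \MS^{\op},
\]
and read off that $\MVlf$ and $\MS$ are dually equivalent. The first two links are essentially recorded in the introduction; the genuine content, and the step on which I would spend almost all of the effort, is the last one: the concrete identification $\mathsf{pro\text{-}}\MS_{\mathsf{f}} \simeq \MS$.

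For the first link I would invoke the standard universal-algebra fact that, in any finitary variety, the locally finite algebras are exactly the filtered colimits of finite algebras, each such algebra being the colimit of the directed system of its finite subalgebras. To upgrade this to an equivalence $\mathsf{ind\text{-}}\MV_{\mathsf{f}} \simeq \MVlf$ one checks that finite MV-algebras are finitely presentable, so that the comparison functor --- sending a formal filtered diagram to its actual colimit in $\MV$ --- is fully faithful as well as essentially surjective. For the middle link I would start from the finite-level correspondence between $\MV_{\mathsf{f}}$ and $\MS_{\mathsf{f}}$; this is contravariant --- a homomorphism $S_m\to S_n$ exists precisely when $m$ divides $n$, matching a denominator-decreasing map in the opposite direction --- so it is a duality $\MV_{\mathsf{f}}\simeq\MS_{\mathsf{f}}^{\op}$, obtained from the structure theorem (every finite MV-algebra is a product of the chains $S_n$) together with the classification of finite chains. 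Applying the $2$-functorial ind-completion and the identity $\mathsf{ind\text{-}}(\C^{\op})\simeq(\mathsf{pro\text{-}}\C)^{\op}$ then yields $\mathsf{ind\text{-}}\MV_{\mathsf{f}}\simeq(\mathsf{pro\text{-}}\MS_{\mathsf{f}})^{\op}$.

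The heart of the argument is to show that $\MS$ realises the free completion of $\MS_{\mathsf{f}}$ under cofiltered limits. I would verify the universal characterisation of pro-completions directly for $\MS$, in four steps. (i) $\MS$ has cofiltered limits: given a cofiltered diagram $(X_i,\zeta_i)$, take $X\df\lim_i X_i$ in $\BS$ (which has all limits) and set $\zeta\df\sup_i(\zeta_i\circ\pi_i)$, the supremum computed pointwise in the complete lattice $\NN$; one checks that $\zeta$ is continuous (its preimage of a subbasic $U_{p,k}$ is a union of opens) and that $(X,\zeta)$ with the projections is the limit. (ii) The inclusion $\MS_{\mathsf{f}}\hookrightarrow\MS$ is full and faithful, which is immediate. (iii) Finite multisets are finitely copresentable: for a finite multiset $F$ and a cofiltered limit $X=\lim_i X_i$, every arrow $X\to F$ factors essentially uniquely through some stage. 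Here compactness of $X$ lets the underlying continuous map into the finite discrete space $F$ factor through a finite quotient, while the crucial point is that the denominator condition $\zeta_X\geq\zeta_F\circ f$ against the finite-valued $\zeta_F$ is an \emph{open} condition, since the basic opens $U_n$ of $\NN$ are exactly the sets $\{\nu\geq\nu_n\}$; hence it is already witnessed at a finite stage. (iv) Every multiset $(X,\zeta)$ is a cofiltered limit of finite ones: write $X$ as the inverse limit of its finite discrete quotients and approximate $\zeta$ from below by finite-valued denominators constant on the fibres of finer and finer quotients, using that each supernatural number is the directed supremum of the finite numbers beneath it.

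The main obstacle I anticipate is precisely step (iv), together with the continuity bookkeeping in (i) and (iii): one must approximate $(X,\zeta)$ simultaneously in its two ``directions of infinity'' --- the possibly infinite underlying Boolean space and the possibly infinite-valued denominator function --- and must ensure that the finite approximants assemble into a genuinely cofiltered diagram whose limit, computed as in (i), returns $(X,\zeta)$ on the nose. This is exactly where the precise topology on $\NN$ matters (and where the choice between the candidate bases discussed before \Cref{d:multiset} must be made correctly): continuity of $\zeta$ into $\NN$ is what guarantees that the finite truncations can be chosen compatibly and that the decreasing-denominator conditions against finite targets are detected at finite stages. Once (i)--(iv) are in place, the universal property of $\mathsf{pro\text{-}}\MS_{\mathsf{f}}$ gives $\mathsf{pro\text{-}}\MS_{\mathsf{f}}\simeq\MS$, and composing the three links completes the proof.
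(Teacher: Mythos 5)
Your proposal is correct and follows essentially the same route as the paper, which does not prove this statement itself but imports it from \cite[Theorem 6.8]{CigDubMun04}: your factorisation $\MVlf \simeq \mathsf{ind\text{-}}\MV_{\mathsf{f}} \simeq (\mathsf{pro\text{-}}\MS_{\mathsf{f}})^{\op} \simeq \MS^{\op}$, with the identification of $\MS$ as the pro-completion of the finite multisets as the main burden, is exactly the strategy sketched in the paper's introduction and carried out in the cited source. The details you flag are genuinely the delicate ones but are fillable by standard means (in step (iii), since limit projections and transition maps need not be surjective, the openness argument must be supplemented by shrinking clopens around the images and killing the leftover clopen via the fact that a clopen with empty preimage in the limit becomes empty at some deeper stage, by the nonemptiness of cofiltered limits of nonempty Boolean spaces).
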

The duality of the preceding theorem was extended in \cite{Cignoli_2012} to encompass \emph{locally weakly finite} MV-algebras and \emph{real-valued} multisets.
\begin{remark}\label{r:duality}
The above duality is part of a larger duality between semisimple MV-algebras\footnote{Notice that every locally finite MV-algebra is semisimple. We warn the reader that C.\ C.\ Chang misleadingly used the term \emph{locally finite} as a synonym for \emph{semisimple}.} and closed subspaces of the hyper-cubes $[0,1]^{\kappa}$, for $\kappa$ that ranges among cardinals. See \cite{zbMATH06088860,zbMATH06137396} and in particular \cite[Remark 10]{zbMATH06688534} where the duality of \cref{t:duality} is framed in the setting of \cite{zbMATH06088860}.  Here we limit ourselves to notice that every multiset can be homeomorphically embedded into some power of $[0,1]$ in such a way that the denominator function $\zeta$ equals the least common multiple of the denominators of coordinates of the (necessarily rational) point in the image of the embedding.  For more information about this kind of embeddings and for a topological characterisation of the maps from a compact and Hausdorff space into $\N$ that are concretely representable as ``denominator maps'' we refer the reader to \cite{AbMaSpa20}.
\end{remark}

For the reasons sketched in \cref{r:duality}, if $(X,\zeta)$ is a multiset we call $\zeta(x)$ the \emph{denominator of $x$}, for every $x\in X$, and $\zeta$ is called \emph{denominator map}.
When there is no danger of confusion, we write $X$ instead of $(X,\zeta)$. Since $(\Nnot, \mathrm{div})$ embeds into $\NN$, we also say that a function $f \colon X \to Y$ \emph{decreases denominators} when it satisfy the property described in \eqref{eq:respect-denominators}. Further, we say that $f$ \emph{preserves denominators} whenever, for every $x \in X$, we have $\zeta_X(x) = \zeta_Y(f(x))$.

It should be noted that the requirement that the arrows in $\MS$ decrease denominators depart from several other generalisations of finite multisets to be found in  \cite{Blizard1989,Juergensen2020,Lake1975,Monro1987,Syropoulos2001}.  However, the category dual to $\MVlf$ ---which is of prime interest in this paper--- must contemplate this property as it directly reflects the behaviour of the arrows between finite MV-algebras, as explained in the Introduction.

\subsection{Properties of the category of multisets}
\label{ss:category-MS}
In this subsection, we investigate some basic categorical constructions in $\MS$.  The following crucial remark and the ensuing \cref{t:U-topological} greatly simplify calculations.

The category $\MS$ has a natural forgetful functor $U$ into the category $\BS$ of Boolean spaces which simply forgets denominator maps.
Vice versa, there are at least two natural denominator maps that can be attached to any Boolean space $X$ to make it into a multiset: the denominator map $\zeta_{1}$, where $\zeta_{1}(x)(p)\df0$ for every $x\in X$ and $p\in \P$, and the denominator map $\zeta_{\infty}$, where $\zeta_{\infty}(x)(p)\df\infty$ for every $x\in X$ and $p\in \P$. 
This relation between the two categories resembles the one between $\mathsf{Set}$ and $\mathsf{Top}$ in which one can endow any set with the discrete or the indiscrete topology.
In this subsection, we will see that this similarity can be formally stated by proving that the forgetful functor $U\colon \MS\to\BS$ is \emph{topological} (see \cref{t:U-topological} below).
From this fact, we will derive many consequences about limits, colimits, epic and monic arrows, some of which were already observed in \cite{CigDubMun04}.

We recall basic notions and results concerning topological functors. For more details, we refer to \cite[Chapter~21]{AdaEtal2006}.
Given a faithful functor $G\colon \mathsf{A}\to \mathsf{X}$, a family of arrows $\{f_i\colon A\to A_i\}_{i\in I}$ in $\mathsf{A}$ is called \emph{$G$-initial} provided that, for each arrow $h\colon G(B)\to G(A)$ in $\mathsf{X}$, if for every $i\in I$ there exists an arrow $g_{i}\colon B\to A_{i}$ in $\mathsf{A}$ with $G(g_{i})=G(f_i)\circ h$, then there exists an arrow $h_{0}\colon B\to A$ in $\mathsf{A}$ such that $G(h_{0})=h$ (in loose terms, $h$ is an $\mathsf{A}$-arrow whenever all compositions $G(f_i)\circ h$ are so).
\[\begin{tikzcd}
B \arrow[swap, dashed]{d}{\exists!h_{0}} \arrow{rd}{g_i} & \\
A \arrow[swap]{r}{f_i} & A_i
\end{tikzcd}
\quad \quad 
\begin{tikzcd}
G(B) \arrow[swap]{d}{G(h_{0})=h} \arrow{rd}{G(g_i)} & \\
G(A) \arrow[swap]{r}{G(f_i)} & G(A_i)
\end{tikzcd}
\]
Furthermore, we say that a family of arrows $\{\overline{f}_i\colon A\to A_i\}_{i\in I}$ in $\mathsf{A}$ is 
a \emph{lift} of $\{f_i\colon X\to G(A_i)\}_{i\in I}$ if $G(A)=X$ and $G(\overline{f}_i)=f_i$. Finally, a faithful functor $G\colon \mathsf{A}\to \mathsf{X}$ is called \emph{topological} (see e.g.\ \cite[Definition 21.1]{AdaEtal2006}) provided that every class-indexed family of arrows $\{f_i\colon X\to GA_i\}_{i\in I}$ in $\mathsf{X}$ has a unique $G$-initial lift.

\begin{lemma}\label{l:join-is-continuous}
Let $X$ be a topological space, and let $\{f_i\colon X\to \NN\}_{i\in I}$ be a family of continuous functions.
Then the function $\bigvee_{i \in I} f_i \colon X \to \NN$ that maps $x$ to $\bigvee_{i\in I} f_i(x)$ is continuous.
\end{lemma}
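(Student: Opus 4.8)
The plan is to exploit the sub-basis for the topology on $\NN$ recorded in the statement, namely the sets
\[
U_{p,k} = \{\nu \in \NN \mid \nu(p) > k\}, \quad p \in \P,\ k \in \N.
\]
Since a map into a topological space is continuous provided the preimage of each member of a fixed sub-basis is open, it suffices to show that $g^{-1}(U_{p,k})$ is open, where I abbreviate $g \df \bigvee_{i\in I} f_i$. Everything then reduces to understanding membership of $g(x)$ in $U_{p,k}$.

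First I would record that joins in the complete lattice $\NN$ are computed pointwise: for each $x \in X$ and each $p \in \P$ we have $g(x)(p) = \sup_{i\in I} f_i(x)(p)$, the supremum being taken in the complete lattice $\N \cup \{\infty\}$. This is immediate from the definition of the order $\nu \leq \mu \iff \nu(p) \leq \mu(p)$ for all $p$.

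The key step is the elementary observation that, for a family $(a_i)_{i\in I}$ in $\N \cup \{\infty\}$ and a \emph{finite} threshold $k \in \N$, one has $\sup_{i\in I} a_i > k$ if and only if $a_i > k$ for some $i \in I$: if every $a_i \leq k$ then the supremum is at most $k$, while any single $a_i > k$ already forces the supremum above $k$. Applying this with $a_i = f_i(x)(p)$ gives, for every $x \in X$,
\[
g(x) \in U_{p,k} \iff \sup_{i\in I} f_i(x)(p) > k \iff \exists\, i \in I \colon f_i(x) \in U_{p,k},
\]
whence $g^{-1}(U_{p,k}) = \bigcup_{i\in I} f_i^{-1}(U_{p,k})$.

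To conclude, each $f_i$ is continuous, so each $f_i^{-1}(U_{p,k})$ is open in $X$; the displayed equality then exhibits $g^{-1}(U_{p,k})$ as a union of open sets, hence open. As $p$ and $k$ range over $\P$ and $\N$ this covers all sub-basic opens, and the continuity of $g$ follows. I expect no genuine obstacle here: the only delicate point is that the threshold $k$ is \emph{finite}, which is exactly what makes ``$\sup > k$'' equivalent to ``some term is $> k$'' and thereby turns the preimage into a plain union rather than something requiring a limiting argument. Working with the sub-basis of the $U_{p,k}$ rather than the basis of the $U_n$ is what keeps this bookkeeping trivial.
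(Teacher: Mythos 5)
Your proof is correct and follows essentially the same route as the paper's: both compute the preimage of a sub-basic open $U_{p,k}$ under $\bigvee_{i\in I} f_i$ and show it equals $\bigcup_{i\in I} f_i^{-1}[U_{p,k}]$, using that $\sup_i a_i > k$ (with $k$ finite) iff $a_i > k$ for some $i$. Your remark singling out the finiteness of the threshold $k$ as the crux is a nice explicit articulation of what the paper's chain of equalities uses implicitly.
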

\begin{proof}
A proof, which can also be found in \cite[Lemma 3.4]{CigDubMun04}, runs as follows.
For $p\in\P$ and $k\in \N$ we have
\begin{align*}
\left(\bigvee_{i \in I} f_i\right)^{-1} [U_{p,k}]& = \left\{x \in X \mid \bigvee_{i \in I} f_i(x) \in U_{p,k}\right\} \\
& = \left\{x \in X \mid \bigvee_{i \in I} f_i(x)(p) > k\right\} \\
& = \{x \in X \mid \exists i \in I \text{ s.t.\ } f_i(x)(p) > k\}\\
& = \bigcup_{i \in I} \{x \in X \mid f_i(x)(p) > k\} \\
& = \bigcup_{i \in I} f_i^{-1} [U_{p,k}].
\end{align*}
\end{proof}

We let $U$ denote the forgetful functor from $\MS$ to $\BS$.
\begin{theorem}\label{t:U-topological}
Every family of arrows $\{f_i\colon X\to U(X_i,\zeta_{X_i})\}_{i\in I}$  in $\BS$ admits a unique $U$-initial lift, namely $\{\bar{f}_i\colon (X, \zeta_X) \to (X_i,\zeta_{X_i})\}_{i\in I}$, where, for each $x \in X$,
\[
\zeta_X(x)\coloneqq \bigvee_{i\in I}\zeta_{X_i}(f_i(x)),
\]
and, for each $i \in I$, $\bar{f}_i$ is just the function $f_i$.
Therefore the functor $U\colon \MS\to \BS$ is topological.
\end{theorem}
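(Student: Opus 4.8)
The plan is to check, directly and in this order, the three things demanded by the definition of a topological functor: that the proposed family is a lift, that it is $U$-initial, and that it is the unique such lift. The only genuinely topological input needed is the continuity of a pointwise join, which is already supplied by \cref{l:join-is-continuous}; everything else reduces to manipulating the pointwise order on $\NN$ and exploiting that an $\MS$-arrow is nothing but a continuous denominator-decreasing map, so that $U$ is faithful and indeed identity-on-underlying-functions.

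First I would confirm that $(X,\zeta_X)$ is a bona fide object of $\MS$ and that $\{\bar f_i\}$ is a lift. Each composite $\zeta_{X_i}\circ f_i\colon X\to\NN$ is continuous, being a composite of continuous maps, so by \cref{l:join-is-continuous} their pointwise join $\zeta_X=\bigvee_{i\in I}\zeta_{X_i}\circ f_i$ is continuous; hence $(X,\zeta_X)$ is a multiset. Moreover each $\bar f_i=f_i$ is an $\MS$-arrow $(X,\zeta_X)\to(X_i,\zeta_{X_i})$, because $\zeta_X(x)=\bigvee_{j\in I}\zeta_{X_j}(f_j(x))\geq\zeta_{X_i}(f_i(x))$ (a join dominates each of its members), which is exactly the denominator-decreasing condition \eqref{eq:respect-denominators}. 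Since $U(\bar f_i)=f_i$, the family $\{\bar f_i\}$ is a lift of $\{f_i\}$.

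For $U$-initiality, let $B=(B,\zeta_B)$ be a multiset and let $h\colon U(B)\to X$ be a continuous map such that for each $i$ there is an $\MS$-arrow $g_i\colon B\to(X_i,\zeta_{X_i})$ with $U(g_i)=f_i\circ h$. Since $g_i$ decreases denominators and equals $f_i\circ h$ as a function, we obtain $\zeta_B(b)\geq\zeta_{X_i}(f_i(h(b)))$ for every $b\in B$ and every $i$; taking the join over $i$ gives $\zeta_B(b)\geq\bigvee_{i\in I}\zeta_{X_i}(f_i(h(b)))=\zeta_X(h(b))$. Thus $h$ decreases denominators as a map $(B,\zeta_B)\to(X,\zeta_X)$, so $h_0\df h$ is an $\MS$-arrow with $U(h_0)=h$. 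This $h_0$ is unique because $U$ is faithful (indeed identity-on-functions), so any lift of $h$ is literally $h$.

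Finally I would settle uniqueness of the initial lift, which is the clause one must not overlook. Suppose $\{\bar f_i'\colon(X,\zeta_X')\to(X_i,\zeta_{X_i})\}$ is another $U$-initial lift on the same carrier $X$ with $U(\bar f_i')=f_i$. As each $\bar f_i'$ is an $\MS$-arrow, $\zeta_X'(x)\geq\bigvee_{i\in I}\zeta_{X_i}(f_i(x))=\zeta_X(x)$. For the reverse inequality I apply the $U$-initiality of $\{\bar f_i'\}$ to the source object $(X,\zeta_X)$ and the continuous map $\mathrm{id}_X$: the arrows $\bar f_i\colon(X,\zeta_X)\to(X_i,\zeta_{X_i})$ satisfy $U(\bar f_i)=f_i=U(\bar f_i')\circ\mathrm{id}_X$, so there is an $\MS$-arrow $(X,\zeta_X)\to(X,\zeta_X')$ lifting $\mathrm{id}_X$, forcing $\zeta_X(x)\geq\zeta_X'(x)$. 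Hence $\zeta_X'=\zeta_X$, and the lift is unique; since every family in $\BS$ of this shape admits such a lift, $U$ is topological. I do not anticipate a serious obstacle: the sole nontrivial ingredient is \cref{l:join-is-continuous}, and the only point requiring care is precisely this uniqueness step, handled by the identity-map trick above.
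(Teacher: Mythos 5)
Your proposal is correct and follows essentially the same route as the paper: continuity of $\zeta_X$ via \cref{l:join-is-continuous}, the observation that each $f_i$ decreases denominators because a join dominates its members, and the same join computation for $U$-initiality. The only divergence is the uniqueness clause, where the paper cites amnesticity of $U$ (\cite[Proposition 21.5]{AdaEtal2006}) while you prove it directly with the identity-map trick --- a valid, self-contained unwinding of exactly that cited fact.
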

\begin{proof}
By \cref{l:join-is-continuous}, $(X,\zeta_X)$ is a multiset.
Notice that each $f_i\colon X\to X_i$ decreases denominators with respect to $\zeta_{X_{i}}$ and the newly defined $\zeta_{X}$, thus it corresponds to an arrow of multisets, denoted by $\bar{f}_{i}$ and $U(\bar{f}_{i})=f_{i}$.
Hence the family $\{f_i\colon X\to U(X_i,\zeta_{X_i})\}_{i\in I}$ has a lift $\{\bar{f}_{i}\colon (X,\zeta_X)\to (X_i,\zeta_{X_i})\}_{i\in I}$.
Let us prove that it is $U$-initial.
Let $(C,\zeta_C)$ be a multiset and  $h\colon C\to X$ be an arrow in $\BS$. Suppose that each composite $U(\bar{f}_{i})\circ h\colon C\to X_i$ decreases the denominators, i.e., for every $i\in I$, $\zeta_{X_i}(U(\bar{f}_{i})(h(c)))\leq \zeta_C(c)$.
Therefore, $\zeta_X(h(c))=\bigvee_{i\in I}\zeta_{X_i}(f_{i}(h(c)))\leq \zeta_C(c)$.
This shows that $h$ decreases  denominators, so $\{\bar{f}_i\colon (X,\zeta_X)\to (X_i,\zeta_{X_i})\}_{i\in I}$ is $U$-initial.
Uniqueness follows from the easily verifiable fact that the functor $U\colon \MS\to \BS$ has the property that an iso $f$ in $\MS$ is an identity whenever $U(f)$ is an identity, see \cite[Proposition 21.5]{AdaEtal2006}\footnote{A functor with this property is called amnestic, see \cite[Definition 3.27]{AdaEtal2006}.}.
\end{proof}
\begin{corollary}\label{c:initial}
A  family of arrows $\{f_i\colon (X,\zeta)\to (X_i,\zeta_{X_i})\}_{i\in I}$ in $\MS$ is $U$-initial if and only if, for every $x\in X$, we have $\zeta_X(x)=\bigvee_{i\in I}\zeta_{X_i}(f_i(x))$.
In particular, an arrow of multisets $f\colon (X,\zeta_X)\to (Y,\zeta_Y)$ is $U$-initial if and only if it preserves denominators.
\end{corollary}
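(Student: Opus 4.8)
The plan is to read off everything from \cref{t:U-topological}, which already pins down both the existence and the explicit shape of the unique $U$-initial lift. Given a family $\{f_i\colon (X,\zeta_X)\to (X_i,\zeta_{X_i})\}_{i\in I}$ in $\MS$, applying $U$ produces the family $\{f_i\colon X\to U(X_i,\zeta_{X_i})\}_{i\in I}$ in $\BS$, whose unique $U$-initial lift carries the denominator map $x\mapsto \bigvee_{i\in I}\zeta_{X_i}(f_i(x))$. For the backward implication I would simply observe that if $\zeta_X(x)=\bigvee_{i\in I}\zeta_{X_i}(f_i(x))$ for every $x$, then the given family \emph{is} this lift, hence is $U$-initial.

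For the forward implication I would argue directly rather than appeal to abstract uniqueness. Since each $f_i$ is already an arrow of $\MS$, it decreases denominators, so $\zeta_X(x)\geq \zeta_{X_i}(f_i(x))$ for every $i$, and therefore $\zeta_X(x)\geq \bigvee_{i\in I}\zeta_{X_i}(f_i(x))$ automatically; only the reverse inequality requires work. To obtain it, I would set $\zeta'(x)\df \bigvee_{i\in I}\zeta_{X_i}(f_i(x))$, which is a continuous denominator map by \cref{l:join-is-continuous}, and test the $U$-initiality of the family against the identity viewed as a $\BS$-arrow $h=\mathrm{id}_X\colon U(X,\zeta')\to U(X,\zeta_X)$. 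Each composite $U(f_i)\circ h=f_i$ decreases denominators from $(X,\zeta')$ to $(X_i,\zeta_{X_i})$, because $\zeta'(x)\geq \zeta_{X_i}(f_i(x))$ by definition of the join. Hence $U$-initiality furnishes an $\MS$-arrow $h_0\colon (X,\zeta')\to (X,\zeta_X)$ with $U(h_0)=\mathrm{id}_X$; this $h_0$ decreases denominators, i.e.\ $\zeta'(x)\geq \zeta_X(x)$, which is exactly the missing inequality. Combining the two inequalities gives $\zeta_X(x)=\bigvee_{i\in I}\zeta_{X_i}(f_i(x))$.

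The ``in particular'' clause is then the case of a single arrow $f\colon (X,\zeta_X)\to (Y,\zeta_Y)$: here the index set is a singleton and the join collapses to $\zeta_Y(f(x))$, so the criterion reads $\zeta_X(x)=\zeta_Y(f(x))$ for all $x$, which is precisely the definition of $f$ preserving denominators. I expect no genuine obstacle here; the only point meriting care is the forward direction, where one must exhibit a concrete witness map to probe initiality, and confirm that the join-denominator $\zeta'$ is a legitimate (continuous) object of $\MS$ so that $h_0$ is allowed to exist.
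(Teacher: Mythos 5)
Your proposal is correct, and its backward half coincides with the paper's argument: if $\zeta_X$ equals the pointwise join, the family literally \emph{is} the $U$-initial lift described in \cref{t:U-topological}. Where you diverge is the forward implication. The paper disposes of it in one stroke by citing the \emph{uniqueness} of $U$-initial lifts from \cref{t:U-topological} (which in that theorem's proof rests on amnesticity of $U$): a $U$-initial family is a lift of its own $U$-image, hence must coincide with the canonical one, forcing $\zeta_X(x)=\bigvee_{i\in I}\zeta_{X_i}(f_i(x))$. You instead unfold the definition of $U$-initiality and probe it with a concrete witness: setting $\zeta'(x)\df\bigvee_{i\in I}\zeta_{X_i}(f_i(x))$ (continuous by \cref{l:join-is-continuous}, so $(X,\zeta')$ is a genuine multiset) and testing against $h=\mathrm{id}_X\colon U(X,\zeta')\to U(X,\zeta_X)$, you obtain $h_0$ with $U(h_0)=\mathrm{id}_X$, whence $\zeta'\geq\zeta_X$; the reverse inequality $\zeta_X\geq\zeta'$ is automatic since each $f_i$ decreases denominators. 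In effect you re-prove, in this special case, exactly the half of the uniqueness statement that is needed, without routing through amnesticity. Your version is slightly longer but more self-contained and makes visible \emph{why} initiality pins down the denominator map; the paper's version is shorter but leans on the abstract uniqueness machinery. Both treatments of the ``in particular'' clause (singleton index set, join collapsing to $\zeta_Y(f(x))$) agree, and I see no gap in your argument.
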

\begin{proof}
A family of arrows $\{f_i\colon (X,\zeta)\to (X_i,\zeta_{X_i})\}_{i\in I}$ in $\MS$ is clearly a lift of $\{U(f_i)\colon X \to U(X_i,\zeta_{X_i})\}_{i\in I}$.
By the description and the uniqueness of $U$-initial lifts (\cref{t:U-topological}), $\{f_i\colon (X,\zeta)\to (X_i,\zeta_{X_i})\}_{i\in I}$ is $U$-initial if and only if $\zeta_X(x)=\bigvee_{i\in I}\zeta_{X_i}(x)$.
\end{proof}
\begin{corollary}[\mbox{Cf.\ \cite[Theorem 8.10]{zbMATH01382786} and \cite[Corollary 6.12]{CigDubMun04}}]\label{c:complete-and-co}
The category of multisets is complete and co-complete.
\end{corollary}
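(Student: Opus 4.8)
The plan is to derive both completeness and co-completeness of $\MS$ from the fact, established in \cref{t:U-topological}, that the forgetful functor $U\colon \MS\to\BS$ is topological. The guiding principle is a standard result in the theory of topological functors: such a functor both preserves and lifts all limits and colimits, so that its domain is complete (resp.\ co-complete) as soon as its codomain is (see, e.g., \cite[Theorem~21.16]{AdaEtal2006}). It therefore suffices to verify that the base category $\BS$ is complete and co-complete.

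For the base category, I would appeal to Stone duality. The category $\BA$ of Boolean algebras is a finitary variety, hence both complete and co-complete. Since $\BS\simeq\BA^{\op}$ by Stone duality \cite[Theorem~8.2]{Kop89}, limits in $\BS$ correspond to colimits in $\BA$ and colimits in $\BS$ to limits in $\BA$; thus $\BS$ is complete and co-complete as well.

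With these two ingredients in place the conclusion is immediate, and one can even read off the constructions concretely. Given a small diagram in $\MS$, one first forms its limit $L$ in $\BS$, with limiting cone $\{\pi_i\colon L\to X_i\}_{i}$, and then takes the $U$-initial lift of this cone; by \cref{c:initial} this amounts to endowing $L$ with the denominator map $\zeta_L(x)=\bigvee_i \zeta_{X_i}(\pi_i(x))$, and the resulting object is a limit in $\MS$. Colimits are handled dually: since the notion of topological functor is self-dual, $U$ also admits \emph{final} lifts, so one forms the colimit in $\BS$ and equips it with the final lift of the colimit co-cone (morally, the pointwise infimum of the denominators over the fibres, the continuity being guaranteed by topologicity).

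I expect no serious obstacle: once \cref{t:U-topological} is available, the statement is a direct application of the general machinery. The only point worth flagging is that a topological functor lifts colimits just as readily as limits—a feature that may at first seem surprising, since $U$ forgets structure rather than merely restricting along a subcategory—but this is precisely the self-dual strength encoded in the definition, and it is exactly what lets a single argument settle both halves of the statement.
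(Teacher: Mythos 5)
Your proposal is correct and follows essentially the same route as the paper: completeness and co-completeness of $\BS$ via Stone duality with the variety $\BA$, lifted to $\MS$ through the topological functor $U$ of \cref{t:U-topological} using \cite[Theorem 21.16]{AdaEtal2006}. Your concrete descriptions of the lifted (co)limits match the paper's \cref{l:limits,l:final}, and your hedge (``morally'') on the pointwise infimum for colimits is apt, since the paper notes that formula can fail for infinite diagrams.
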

\begin{proof}
Since $\BS$ is dually equivalent to the variety of Boolean algebras, is complete and co-complete.
By \cite[Theorem 21.16.(1)]{AdaEtal2006}, if $G\colon \mathsf{A}\to \mathsf{X}$ is topological, then $\mathsf{A}$ is (co-)complete if and only if $\mathsf{X}$ is (co-)complete.
The claim then follows from \cref{t:U-topological}.
\end{proof}
\begin{remark} \label{r:adjoints}
Every topological functor $G \colon \mathsf{A} \to \mathsf{X}$ has a left adjoint $L'$ (the discrete functor) and a right adjoint $R'$ (the indiscrete functor), which are full embeddings satisfying $G \circ L' = 1_\mathsf{X}$ and $G \circ R' = 1_{\mathsf{X}}$ \cite[Proposition 21.12]{AdaEtal2006}.
The left adjoint $L$ to the forgetful functor $U\colon \MS \to \BS$ maps a Boolean space $S$ to the multiset $(S,\zeta_{\infty})$, where $\zeta_{\infty}(x)(p)=\infty$ for every $x\in S$ and $p\in \P$.
The right adjoint $R$ to $U$ maps a Boolean space $S$ to the multiset $(S,\zeta_{1})$, where $\zeta_{1}(x)(p)=0$ for every $x\in S$ and $p\in \P$. 
The functor $R \colon \BS \to \MS$ is a full embedding, and $U$ is left adjoint to $R$; it follows that $\BS$ is reflective in $\MS$.
For similar reasons, $\BS$ is co-reflective in $\MS$.
Since the forgetful functor $U \colon \MS \to \BS$ has both a right and left adjoint it preserves both limits and co-limits.  This entails that limits in the category $\MS$ can easily be described in terms of limits in the category of Boolean spaces.
\end{remark}
\begin{lemma} \label{l:limits}
Let $D\colon \mathsf{I}\to \MS$ be a diagram.
The family $\mathcal{L}=\{l_i\colon (X, \zeta_X)\to D(i)\}_{i\in \mathsf{I}}$ is a limit of $D$ if and only if $U(\mathcal{L})$ is a limit of $U \circ D$ and $\zeta_X(x) = \bigvee_{i\in \mathsf{I}}\zeta_{D(i)}(l_i(x))$, for every $x \in X$.
\end{lemma}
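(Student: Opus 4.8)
The plan is to leverage that $U$ is topological (\cref{t:U-topological}) together with the characterisation of $U$-initial families in \cref{c:initial}, which identifies the denominator formula $\zeta_X(x) = \bigvee_{i\in\mathsf{I}} \zeta_{D(i)}(l_i(x))$ with the assertion that $\mathcal{L}$ is $U$-initial. Thus the statement to be proved becomes: a cone $\mathcal{L}$ over $D$ in $\MS$ is a limit if and only if it is $U$-initial and $U(\mathcal{L})$ is a limit of $U\circ D$. This is the standard fact that a topological functor creates limits out of initial lifts; I would prove both implications directly from the definition of $U$-initiality rather than merely quoting it, since the argument is short and more informative.

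For the ``if'' direction, assume $U(\mathcal{L})$ is a limit in $\BS$ and $\mathcal{L}$ is $U$-initial. Given an arbitrary cone $\{m_i\colon (C,\zeta_C)\to D(i)\}_{i\in\mathsf{I}}$ in $\MS$, I would apply $U$ to obtain a cone over $U\circ D$; the limit property in $\BS$ then yields a unique continuous $h\colon C\to X$ with $U(l_i)\circ h = U(m_i)$ for all $i$. Since each $U(m_i)$ underlies the $\MS$-arrow $m_i$ and $\mathcal{L}$ is $U$-initial, the very definition of $U$-initiality provides a lift $\bar h\colon (C,\zeta_C)\to(X,\zeta_X)$ with $U(\bar h)=h$; faithfulness of $U$ then makes $\bar h$ the unique mediating arrow in $\MS$. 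Hence $\mathcal{L}$ is a limit.

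For the ``only if'' direction, assume $\mathcal{L}$ is a limit of $D$ in $\MS$. Since $U$ has a left adjoint (\cref{r:adjoints}), it preserves limits, so $U(\mathcal{L})$ is a limit of $U\circ D$, giving the first condition. For the denominator formula I would set $\zeta_X'(x)\df\bigvee_{i\in\mathsf{I}}\zeta_{D(i)}(l_i(x))$. By \cref{c:initial} (equivalently, by \cref{t:U-topological}) the family $\mathcal{L}'=\{l_i\colon(X,\zeta_X')\to D(i)\}_{i\in\mathsf{I}}$ is the $U$-initial lift of $U(\mathcal{L})$, hence a limit of $D$ by the already established ``if'' direction. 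Comparing the two limit cones $\mathcal{L}$ and $\mathcal{L}'$ produces a unique mediating isomorphism $\psi$; since its underlying $\BS$-arrow commutes with the legs of the limit $U(\mathcal{L})$ it must equal $\mathrm{id}_X$, so both $\psi$ and $\psi^{-1}$ are carried by the identity and therefore decrease denominators, forcing $\zeta_X=\zeta_X'$.

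I expect no serious obstacle: the content lies entirely in recognising, via \cref{c:initial}, that the join formula is exactly the $U$-initiality condition, after which both implications reduce to bookkeeping with the defining universal property of $U$-initial families and with the faithfulness and limit-preservation of $U$. The only step needing a little care is the last one, where I must verify that the comparison isomorphism between $\mathcal{L}$ and $\mathcal{L}'$ is carried by the identity on $X$; this is precisely where amnesticity of $U$ (equivalently, the two-sided decreasing-denominator condition for an isomorphism) enters.
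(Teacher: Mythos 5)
Your proposal is correct and takes essentially the same route as the paper: both reduce the claim, via \cref{c:initial}, to the statement that for the faithful, limit-preserving functor $U$ (limit preservation coming from \cref{r:adjoints}) a cone is a limit if and only if it is $U$-initial and its image is a limit; the paper simply cites this general fact from Ad\'amek--Herrlich--Strecker (Proposition 13.15), whereas you prove it inline. Your direct verification of both implications, including the argument that the comparison isomorphism between the two limit cones is carried by the identity, is sound.
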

\begin{proof}
The forgetful functor $U \colon \MS \to \BS$ is faithful and preserves all limits by \cref{r:adjoints}.
Therefore, $\mathcal{L}$ is a limit of $D$ if and only if $U(\mathcal{L})$ is a limit of $U \circ D$ and $\mathcal{L}$ is $U$-initial \cite[Proposition 13.15]{AdaEtal2006}.
By \cref{c:initial}, $\mathcal{L}$ is $U$-initial if and only if $\zeta_X(x) = \bigvee_{i\in \mathsf{I}}\zeta_{D(i)}(l_i(x))$, for every $x\in X$.
\end{proof}
\begin{remark} \label{r:reflective}
We recall that limits in $\BS$ are the same as in $\Top$ (the category of topological spaces and continuous maps).
This is guaranteed by the following facts.
The category $\BS$ is a reflective full subcategory of the category of compact Hausdorff spaces and continuous maps: the reflector assigns to each compact Hausdorff space the space of its connected components \cite[Proposition 5.7.12]{BJ01}.
Furthermore, the category of compact Hausdorff spaces is a reflective full subcategory of the category $\Top$: the reflector assigns to each topological space its Stone-\v{C}ech compactification (see \cite[Chapter IV, Section 2]{Joh86}).
Hence, the category $\BS$ is a reflective full subcategory of $\Top$.
Therefore, the forgetful functor from $\BS$ to $\Top$ preserves and reflects limits.
\end{remark}
A characterisation of $U$-final lifts (i.e.\ the dual notion of $U$-initial lifts) and co-limits in $\MS$ in terms of co-limits in $\BS$ is also available, although, in general, it is not as explicit as for $U$-initial lifts and limits.

\begin{lemma}\label{l:final}
Every family of arrows $\{f_i\colon U(X_i,\zeta_{X_i}) \to X\}_{i\in I}$ in $\BS$ admits a unique $U$-final lift, which is $\{\bar{f}_i \colon (X_i,\zeta_{X_i}) \to (X, \zeta_X)\}_{i\in I}$, where $\zeta_X$ is the greatest (with respect to the pointwise order) continuous function $\zeta \colon X \to \NN$ such that, for every $i \in I$ and $x \in X_i$, we have $\zeta(l_i(x)) \leq \zeta_{X_i}(x)$.
If in addition $I$ is finite, then
 \[\zeta_X(x) = \bigwedge_{i \in I, y \in X_i: l_i(y) = x} \zeta_{X_i}(y).\]
\end{lemma}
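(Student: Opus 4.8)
The plan is to construct the claimed denominator map $\zeta_X$ directly, verify that the resulting family is $U$-final, and only then compute it explicitly in the finite case. Existence and uniqueness of $U$-final lifts can alternatively be quoted abstractly, since $U$ being topological (\cref{t:U-topological}) is a self-dual property and hence guarantees unique $U$-final lifts as well (this is the dual of \cref{t:U-topological}). First I would consider the collection $S$ of all continuous maps $\zeta\colon X\to\NN$ such that $\zeta(f_i(y))\le\zeta_{X_i}(y)$ for every $i\in I$ and $y\in X_i$ --- equivalently, those $\zeta$ for which every $f_i$ is an arrow $(X_i,\zeta_{X_i})\to(X,\zeta)$ of $\MS$. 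This collection is nonempty, as the constant map $\zeta_1$ lies in it, and it is closed under arbitrary pointwise joins: such a join is continuous by \cref{l:join-is-continuous}, and it still satisfies the defining inequalities because each summand does and $\zeta_{X_i}(y)$ is an upper bound. Hence $\zeta_X\df\bigvee S$ is itself the greatest element of $S$, which both shows that $\zeta_X$ is well defined and makes each $\bar f_i$ an arrow of $\MS$.

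Next I would check that $\{\bar f_i\colon(X_i,\zeta_{X_i})\to(X,\zeta_X)\}_{i\in I}$ is $U$-final. Let $(C,\zeta_C)$ be a multiset and let $h\colon X\to C$ be a morphism of $\BS$ such that every composite $h\circ f_i$ decreases denominators, i.e.\ $\zeta_C(h(f_i(y)))\le\zeta_{X_i}(y)$ for all $i$ and $y$. The map $\zeta_C\circ h\colon X\to\NN$ is continuous and satisfies exactly the inequalities defining $S$, so $\zeta_C\circ h\in S$ and therefore $\zeta_C\circ h\le\zeta_X$ pointwise; this says precisely that $h$ decreases denominators as a map $(X,\zeta_X)\to(C,\zeta_C)$, so it lifts to $\MS$. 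This is the $U$-finality of the family. Uniqueness of the lift follows, as in \cref{t:U-topological}, from the amnesticity of $U$; alternatively, any competing final lift has a denominator map lying in $S$ (hence $\le\zeta_X$) which, by applying finality to $\mathrm{id}_X$, also dominates $\zeta_X$, forcing equality.

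Finally, for the explicit formula when $I$ is finite, I would set $\eta(x)\df\bigwedge_{i\in I,\ y\in X_i:\ f_i(y)=x}\zeta_{X_i}(y)$, with the convention that the empty meet is the top of $\NN$. The inequality $\zeta_X\le\eta$ holds for any $I$: if $\zeta\in S$ then $\zeta(x)=\zeta(f_i(y))\le\zeta_{X_i}(y)$ for every pair with $f_i(y)=x$, so $\zeta(x)\le\eta(x)$, whence $\zeta_X\le\eta$. For the reverse inequality it suffices to show $\eta\in S$. That $\eta$ satisfies the defining inequalities is immediate, since for a fixed pair $(j,y_0)$ the term $\zeta_{X_j}(y_0)$ already appears in the meet defining $\eta(f_j(y_0))$. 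The crux is the continuity of $\eta$, and this is where finiteness of $I$ and the Boolean-space structure enter. I would write $\eta=\bigwedge_{i\in I}\eta_i$ with $\eta_i(x)\df\bigwedge_{y\in f_i^{-1}(x)}\zeta_{X_i}(y)$, observe that a finite meet of continuous maps into $\NN$ is continuous (on the subbasis, $(\bigwedge_{i}\eta_i)^{-1}[U_{p,k}]=\bigcap_{i}\eta_i^{-1}[U_{p,k}]$ is a finite intersection of opens), and reduce to the continuity of each single-map pushforward $\eta_i$. For a single continuous map $f_i\colon X_i\to X$, a direct computation gives $\eta_i^{-1}[U_{p,k}]=\{x\in X:f_i^{-1}(x)\subseteq \zeta_{X_i}^{-1}[U_{p,k}]\}=X\setminus f_i\bigl[X_i\setminus\zeta_{X_i}^{-1}[U_{p,k}]\bigr]$; here $X_i\setminus\zeta_{X_i}^{-1}[U_{p,k}]$ is closed in the compact space $X_i$, hence compact, so its image under the continuous $f_i$ is compact and therefore closed in the Hausdorff space $X$, making $\eta_i^{-1}[U_{p,k}]$ open. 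I expect this last step --- the continuity of the fiberwise infimum of denominators along a single map --- to be the main obstacle, precisely because it fails for infinite families (infinite intersections of opens need not be open) and relies essentially on compactness of the domain and Hausdorffness of the codomain, i.e.\ on working inside $\BS$. With $\eta$ shown to be continuous and thus in $S$, we obtain $\eta\le\zeta_X$ and hence $\eta=\zeta_X$.
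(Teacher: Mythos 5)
Your proof is correct, and it differs from the paper's mainly in how the first half is established. The paper obtains existence and uniqueness of the $U$-final lift abstractly, by citing the Topological Duality Theorem applied to \cref{t:U-topological}, and then reads off that the denominator of the final lift must be the greatest continuous $\zeta$ making every $f_i$ denominator-decreasing; you instead construct that greatest $\zeta$ by hand, as the join of the set $S$ of admissible denominator maps (continuity via \cref{l:join-is-continuous}), and verify finality and uniqueness directly --- a more self-contained route that in effect re-proves the dual half of topologicity rather than quoting it. For the finite case the two arguments are the same computation up to complementation: the paper shows in a single chain that $\zeta_X^{-1}[\NN\setminus U_{p,k}]=\bigcup_{i\in I} l_i\bigl[\zeta_{X_i}^{-1}[\NN\setminus U_{p,k}]\bigr]$ is a finite union of closed sets, while you factor the meet as $\bigwedge_{i}\eta_i$ and show each fiberwise infimum satisfies $\eta_i^{-1}[U_{p,k}]=X\setminus f_i\bigl[X_i\setminus\zeta_{X_i}^{-1}[U_{p,k}]\bigr]$, open by the Closed Map Lemma --- the identical use of compactness of the domain and Hausdorffness of the codomain, packaged per index. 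Your organization makes more visible exactly where finiteness of $I$ enters (only in the continuity of the finite meet, since each individual $\eta_i$ is continuous regardless), and your explicit empty-meet convention plus the two-sided comparison $\zeta_X\le\eta$ and $\eta\in S$ is clean; what the paper's phrasing buys in exchange is brevity, the abstract citation disposing of existence and uniqueness in one stroke.
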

\begin{proof}
By \cref{t:U-topological}, every family of arrows $\{l_i\colon U(X_i,\zeta_{X_i})\to X\}_{i\in I}$ in $\BS$ admits a unique $U$-final lift  ---see the Topological Duality Theorem \cite[Proposition 21.9]{AdaEtal2006}.
Using the definition of $U$-finality, one obtains that the denominator $\zeta_X$ in the $U$-final lift is the greatest (with respect to the pointwise order) continuous function $\zeta \colon X \to \NN$ such that, for every $i \in I$, the function $f_i \colon X_i \to X$ is the underlying function of an arrow $(X_i,\zeta_{X_i})\to (X, \zeta)$. In other words, $f_i$ preserves denominators, i.e., for every $x \in X_i$, $\zeta(l_i(x)) \leq \zeta_{X_i}(x)$.
(Note that the existence of such a function is guaranteed by the existence of a $U$-final lift, together with the fact that a $U$-final lift must satisfy this property.)

We prove that, when $I$ is a finite set, the function $\zeta_X$ is precisely
\begin{equation}\label{e:inf-for-colimits}
\zeta_X(x) = \bigwedge_{i \in I, y \in X_i: l_i(y) = x} \zeta_{X_i}(y).
\end{equation}
Verifying this statement amounts to proving that the so-defined $\zeta_X$ is continuous, the remaining conditions are easily seen to hold.
We prove that the preimage under $\zeta_X$ of a closed set is closed: for every $p \in \P$ and $k \in \N$ we have
\begin{equation} \label{e:preimage}
\begin{split}
\zeta_{X}^{-1}[\NN \setminus U_{p,k}] & = \zeta_X^{-1}[\{\nu \in \NN \mid \nu(p) \leq k\}] \\
& = \{x \in X \mid \zeta_X(x) \in \{\nu \in \NN \mid \nu(p) \leq k\}\} \\
& = \{x \in X \mid \zeta_X(x)(p) \leq k\} \\
& = \left\{x \in X \mid \left(\bigwedge_{i \in I, y \in X_i: l_i(y) = x} \zeta_{X_i}(y)\right)(p) \leq k\right\}\\
& = \left\{x \in X \mid \bigwedge_{i \in I, y \in X_i: l_i(y) = x} \zeta_{X_i}(y)(p) \leq k\right\} \\
& = \left\{x \in X \mid \exists i \in I, \exists y \in X_i: l_i(y) = x, \zeta_{X_i}(y)(p) \leq k\right\} \\
& = \bigcup_{i \in I} \left\{x \in X \mid \exists y \in X_i: l_i(y) = x, \zeta_{X_i}(y)(p) \leq k\right\} \\
& = \bigcup_{i \in I} \left\{x \in X \mid \exists y \in \zeta_{X_i}^{-1}[\NN \setminus U_{p,k}]: l_i(y) = x\right\} \\
& = \bigcup_{i \in I} l_i\big[\zeta_{X_i}^{-1}[\NN \setminus U_{p,k}]\big],
\end{split}
\end{equation}
which is closed since $I$ is finite.
\end{proof}

We remark that when $I$ is infinite \eqref{e:inf-for-colimits} may fail to define the denominator map of the $U$-final lift.
\begin{lemma}\label{l:colimits-MS}
	Let $D \colon \mathsf{I}\to \MS$ be a diagram.
	The family $\mathcal{L} = \{l_i \colon D(i) \to (X, \zeta_X)\}_{i \in \mathsf{I}}$ is a co-limit of $D$ if and only if $U(\mathcal{L})$ is a co-limit of $U \circ D$ and $\zeta_X$ is the greatest continuous function $\zeta \colon X \to \NN$ such that, for every $i \in I$ and $x \in X_i$, $\zeta(l_i(x)) \leq \zeta_{D(i)}(x)$.
	If $I$ is finite, then 
	\[\zeta_X(x) = \bigwedge_{i \in I, y \in D(i): l_i(y) = x} \zeta_{D(i)}(y).\]
\end{lemma}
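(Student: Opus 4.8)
The plan is to dualise the proof of \cref{l:limits} almost verbatim, trading limits for co-limits, $U$-initial lifts for $U$-final lifts, and joins for meets. The two structural ingredients I need are that $U$ preserves co-limits and that a co-cone in $\MS$ is a co-limit exactly when its $U$-image is a co-limit and the co-cone itself is $U$-final; the explicit description of $\zeta_X$ will then be read off from \cref{l:final}, which has already done all the genuine work.

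First I would recall from \cref{r:adjoints} that $U \colon \MS \to \BS$ has a left adjoint (the functor $L$), hence preserves all co-limits, and that $U$ is faithful. By the co-limit analogue of \cite[Proposition 13.15]{AdaEtal2006}, a co-cone $\mathcal{L}$ is a co-limit of $D$ if and only if $U(\mathcal{L})$ is a co-limit of $U \circ D$ and $\mathcal{L}$ is $U$-final. Next I would invoke \cref{l:final}, according to which $\mathcal{L}$ is $U$-final precisely when $\zeta_X$ is the greatest continuous function $\zeta \colon X \to \NN$ with $\zeta(l_i(x)) \leq \zeta_{D(i)}(x)$ for all $i \in \mathsf{I}$ and $x \in D(i)$. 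Substituting this characterisation into the preceding equivalence yields the first assertion of the lemma. For the case of finite $\mathsf{I}$, \cref{l:final} already supplies the explicit formula $\zeta_X(x) = \bigwedge_{i \in I,\, y \in D(i)\colon l_i(y) = x} \zeta_{D(i)}(y)$, so nothing further is needed.

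I expect no serious obstacle, since the statement is formally dual to \cref{l:limits} and the two delicate points---the continuity of the pointwise meet when $\mathsf{I}$ is finite, and the characterisation of $U$-final lifts in general---were settled in \cref{l:final}. The only step deserving a word of justification is the dualisation of \cite[Proposition 13.15]{AdaEtal2006}: one notes that it is applicable because $U$ \emph{preserves} co-limits, and that the co-limit counterpart of a $G$-initial source is a $G$-final sink, so the cited proposition applies to the topological functor $U^{\op}$, whose $U^{\op}$-initial lifts are exactly the $U$-final lifts of \cref{l:final}.
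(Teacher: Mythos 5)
Your proposal is correct and follows essentially the same route as the paper's proof: faithfulness of $U$ plus preservation of co-limits (\cref{r:adjoints}), the dual form of \cite[Proposition~13.15]{AdaEtal2006} to reduce to $U$-finality, and \cref{l:final} for both the characterisation of $U$-final lifts and the explicit meet formula in the finite case. Your closing remark justifying the dualisation of the cited proposition via $U^{\op}$ is a point the paper leaves implicit, but it does not change the argument.
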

\begin{proof}
The forgetful functor $U \colon \MS \to \BS$ is faithful and preserves all co-limits by \cref{r:adjoints}.
Therefore, $\mathcal{L}$ is a co-limit of $D$ if and only if $U(\mathcal{L})$ is a co-limit of $U \circ D$ and $\mathcal{L}$ is $U$-final \cite[Proposition~13.15]{AdaEtal2006}.
Using the characterisation of $U$-final lifts available from \cref{l:final}, we have the needed result.
\end{proof}

By the previous lemma, co-limits in $\MS$ are built on co-limits in $\BS$.
Moreover, this construction is most explicit in the case of finite co-limits.
Hence, it might be useful to recall the following property of finite co-limits in $\BS$.

\begin{lemma}\label{l:reflect-colimits}
The forgetful functor from $\BS$ to $\Set$ reflects finite co-limits.
\end{lemma}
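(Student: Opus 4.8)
The plan is to argue directly from the definition of reflection of colimits, using that the underlying-set functor $V \colon \BS \to \Set$ is \emph{conservative}. I would first record two facts that carry the argument. First, $V$ is conservative: a morphism of Boolean spaces whose underlying map is a bijection is a continuous bijection from a compact space to a Hausdorff space, hence a homeomorphism. Second---and this is the point that blocks a one-line proof---$V$ does \emph{not} preserve coequalizers, so I cannot invoke the standard principle that a conservative, finite-colimit-preserving functor reflects finite colimits; the set-theoretic coequalizer of a pair of maps in $\BS$ can be non-Hausdorff, hence strictly finer than the coequalizer formed in $\BS$.

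So, given a finite diagram $D \colon \mathsf{I} \to \BS$ and a cocone $\mathcal{L} = \{l_i \colon D(i) \to L\}_{i \in \mathsf{I}}$ whose image $V(\mathcal{L})$ is a colimit of $V \circ D$ in $\Set$, I would form the colimit $\mathcal{K} = \{k_i \colon D(i) \to K\}_{i \in \mathsf{I}}$ of $D$ in $\BS$ (which exists, since $\BS$ is co-complete, being dually equivalent to the variety $\BA$; cf.\ the proof of \cref{c:complete-and-co}) and take the canonical comparison $u \colon K \to L$ with $u \circ k_i = l_i$. By conservativity it then suffices to show that $V(u)$ is a bijection, and for this I would produce a two-sided inverse.

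The key preliminary step is to show that the legs $k_i$ of the $\BS$-colimit are \textbf{jointly surjective} on underlying sets. I would obtain this by writing the finite colimit as the coequalizer of a pair of maps between the finite coproducts $\coprod_i D(i)$: the legs $k_i$ then factor through the canonical map $\coprod_i D(i) \to K$, the coproduct injections are jointly surjective because a finite coproduct in $\BS$ is a disjoint union, and the coequalizer map is surjective. Granting joint surjectivity, the rest is formal. Since $V(\mathcal{L})$ is a colimit in $\Set$ while $V(\mathcal{K})$ is a cocone on $V \circ D$, there is a unique function $w \colon VL \to VK$ with $w \circ l_i = k_i$ for all $i$; the identity $V(u) \circ k_i = l_i$ gives $V(u) \circ w = \mathrm{id}_{VL}$ by the uniqueness clause in the universal property of $V(\mathcal{L})$, and $(w \circ V(u)) \circ k_i = k_i$ together with joint surjectivity of the $k_i$ forces $w \circ V(u) = \mathrm{id}_{VK}$. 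Hence $V(u)$ is bijective, $u$ is an isomorphism, and $\mathcal{L}$ is a colimit of $D$.

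I expect the main obstacle to be the surjectivity of coequalizer maps in $\BS$ (and, relatedly, the joint surjectivity of the colimit legs): this is precisely where the compact--Hausdorff structure is doing the work, and it is what the argument must not take for granted, given that $V$ fails to preserve coequalizers. A clean way to secure it is to note that a coequalizer $q \colon Y \to Q$ in $\BS$ corestricts to its image $q(Y)$, a closed (hence Boolean) subspace that still coequalizes the original pair; the universal property of $q$ then forces the inclusion $q(Y) \hookrightarrow Q$ to split, and being also monic it must be an isomorphism, so $q$ is onto.
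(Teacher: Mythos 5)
Your proof is correct, but it takes a genuinely different route from the paper's. The paper argues directly on the given cocone $\mathcal{L}=\{l_i\colon D(i)\to X\}$: it shows that a subset $Z\subseteq X$ is closed as soon as every preimage $l_i^{-1}[Z]$ is closed --- the Closed Map Lemma makes each $Z\cap l_i[D(i)]$ closed, and the $\Set$-colimit hypothesis gives $X=\bigcup_{i} l_i[D(i)]$, a \emph{finite} union --- so the topology on $X$ is final; hence $\mathcal{L}$ is a colimit in $\Top$, and therefore in $\BS$ since $\BS$ is a full subcategory of $\Top$. You instead run the abstract comparison argument: form the true colimit $\mathcal{K}$ in $\BS$ (using cocompleteness of $\BS$, noted in the proof of \cref{c:complete-and-co}), show the mediating arrow $u$ is bijective on underlying sets, and conclude by conservativity of $V$ --- which is again the Closed Map Lemma, in the guise of ``a continuous bijection from a compact space to a Hausdorff space is a homeomorphism'', cf.\ \cref{l:arrow-in-Stone}. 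Your diagnosis that the standard ``conservative $+$ preserves finite colimits $\Rightarrow$ reflects them'' shortcut is unavailable (because $V$ does not preserve coequalizers) is accurate, and your substitute is sound: joint surjectivity of the legs of a finite $\BS$-colimit via the coequalizer-of-finite-coproducts construction, with surjectivity of coequalizer maps secured by the corestriction-and-splitting argument (a monic split epi is an iso). That last step could in fact be outsourced to \cref{l:arrow-in-Stone}: coequalizers are regular epic, and in $\BS$ epic arrows are exactly the surjections --- your self-contained version is fine but duplicates this. As for what each approach buys: the paper's proof is shorter, needs no colimit constructions in $\BS$, and yields the extra information that the topology on the vertex is final (the form in which the lemma feeds into \cref{l:colimits-MS} and \cref{p:refl-implies-effective}); yours is more formal and portable, working verbatim for any conservative faithful functor into $\Set$ from a cocomplete category whose finite-colimit legs are jointly surjective, at the cost of invoking cocompleteness and the coproduct/coequalizer decomposition where the paper uses only the Closed Map Lemma and finiteness of the diagram.
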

\begin{proof}
Let us denote with $\lvert - \rvert$ the forgetful functor from $\BS$ to $\Set$.
Let $D\colon \mathsf{I}\to \BS$ be a finite diagram, and let $\mathcal{L} = \{l_i\colon  D(i) \to X\}_{i\in \mathsf{I}}$ be a co-cone in $\BS$ such that $\{\lvert l_i\rvert \colon \lvert D(i) \rvert \to \lvert X \rvert\}_{i\in \mathsf{I}}$ is a co-limit in $\Set$.
We claim that a subset $Z$ of $X$ is closed if and only if, for every $i \in \mathsf{I}$, the set $l_i^{-1}[Z]$ is a closed subset of $D(i)$.
The left-to-right implication follows from continuity of $l_i$ for each $i \in \mathsf{I}$.
Let us prove the converse direction.
Suppose that, for every $i \in \mathsf{I}$, the set $l_i^{-1}[Z]$ is closed. Recall the Closed Map Lemma: a continuous function from a compact space to a Hausdorff space is closed.
It follows that the set $l_i[l_i^{-1}[Z]] = Z \cap l_i[D(i)]$ is closed.
Since $\{\lvert l_i\rvert \colon \lvert D(i) \rvert \to \lvert(X, \zeta_X)\rvert\}_{i\in \mathsf{I}}$ is a co-limit in $\Set$, we have $X = \bigcup_{i \in \mathsf{I}} l_i[D(i)]$.
Therefore, 
\[
	Z = Z \cap X = Z \cap \bigcup_{i \in \mathsf{I}} l_i[D(i)] = \bigcup_{i \in \mathsf{I}} Z \cap l_i[D(i)].
\]
This set is closed because it is a union of finitely many closed sets and this proves our claim.
Since $\{\lvert l_i\rvert \colon \lvert D(i) \rvert \to \lvert X \rvert\}_{i\in \mathsf{I}}$ is a co-limit in $\Set$ and by the previous claim the topology on $X$ is final, the diagram $\{l_i \colon D(i) \to X\}_{i\in \mathsf{I}}$ is a co-limit in $\Top$.  Since $\BS$ fully embeds in $\Top$, the latter is also a co-limit in $\BS$.
\end{proof}
The fact that $U$ is topological helps to characterise some general categorical concepts in $\MS$, e.g., regular monic and regular epic. These concepts will play important r{\^o}les in the rest of the paper.

Recall (see, e.g., \cite[Chapter 0]{AlgThe} for this and the subsequent notions) that an arrow $m$ is \emph{extremal monic} if it is monic and whenever $m=g\circ e$ with $e$ epic, then $e$ is iso.  The dual concept defines \emph{extremal epic} arrows.
Also recall that an arrow $m\colon A\to B$ is called \emph{regular monic} if there exists a pair of parallel arrows $f, g \colon B\to C$ for which $m$ is an equaliser, i.e., $f\circ m=g\circ m$ and for every arrow $n\colon D\to B$ with the same property there exists a unique arrow $u$ such that $n=m\circ u$.
Dually, an arrow $m\colon B\to A$ is called \emph{regular epic} if it is a co-equaliser of a pair of parallel arrows $f,g\colon C\rightrightarrows B$.
\begin{lemma} \label{l:arrow-in-Stone}
The following equivalences hold for any arrow in $\BS$:
\begin{enumerate}
\item \label{i:st-mono} regular monic $\Leftrightarrow$ extremal monic $\Leftrightarrow$ monic $\Leftrightarrow$ injective;
\item \label{i:st-epi} regular epic $\Leftrightarrow$ extremal epic $\Leftrightarrow$ epic $\Leftrightarrow$ surjective;
\item \label{i:st-iso} iso $\Leftrightarrow$ bijective.
\end{enumerate}
\end{lemma}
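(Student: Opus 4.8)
The plan is to prove each of the three lines by exhibiting a cycle of implications, using the Closed Map Lemma (already recalled in the proof of \cref{l:reflect-colimits}) as the main topological input, together with two ``test objects'': the terminal object $\1$ (a one-point space) to detect injectivity and the two-point discrete space $\2$ to detect surjectivity, both of which are Boolean spaces. I would dispatch \eqref{i:st-iso} first: an isomorphism is a bijection since the forgetful functor $\BS\to\Set$ is faithful and preserves isomorphisms, while conversely a continuous bijection $f\colon X\to Y$ of Boolean spaces is closed by the Closed Map Lemma, so its set-theoretic inverse is continuous and $f$ is a homeomorphism. Several of the remaining implications are purely formal: in any category one has ``regular monic $\Rightarrow$ extremal monic $\Rightarrow$ monic'' and dually ``regular epic $\Rightarrow$ extremal epic $\Rightarrow$ epic'', the first arrow in each being the standard fact that regular monos (resp.\ epis) are extremal and the second being the definition of extremal; I would cite these from \cite[Chapter 0]{AlgThe}. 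Thus for \eqref{i:st-mono} and \eqref{i:st-epi} only the two ``genuinely topological'' endpoints need work.

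For \eqref{i:st-mono} I would first note ``monic $\Rightarrow$ injective'': if $f\colon X\to Y$ merged distinct points $x_1\neq x_2$, the two maps $\1\rightrightarrows X$ selecting $x_1$ and $x_2$ would be distinct yet equalised by $f$, contradicting monicity. The crux is the converse ``injective $\Rightarrow$ regular monic''. Given injective continuous $f\colon X\to Y$, the Closed Map Lemma makes $f$ a closed embedding, so $f$ is the composite of an isomorphism $X\cong Z$ with the inclusion $\iota\colon Z\rmono Y$ of the closed subspace $Z\df f[X]\seq Y$; since regular monos are stable under composition with isomorphisms, it suffices to realise $\iota$ as an equaliser. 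Here I would use that in a Boolean space every closed set is an intersection of clopen sets: letting $\{C_i\}_{i\in I}$ be the clopens containing $Z$, compactness of $Z$ lets one separate any $y\notin Z$ from $Z$ by a clopen, so $Z=\bigcap_{i\in I}C_i$. Then $\iota$ is the equaliser of the map $Y\to\2^{I}$ with components the characteristic functions $\chi_{C_i}$ and the constant map at $(1)_{i\in I}$; both are continuous because $\2^{I}$ is a Boolean space, and their equaliser is exactly $\{y\mid \chi_{C_i}(y)=1\ \text{for all } i\}=\bigcap_{i\in I}C_i=Z$.

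For \eqref{i:st-epi} the easy endpoint is ``epic $\Rightarrow$ surjective'': if $f[X]\subsetneq Y$, choose $y_0\notin f[X]$ and, using that $f[X]$ is closed and that clopens form a basis, a clopen $C\ni y_0$ disjoint from $f[X]$; then $\chi_C\colon Y\to\2$ and the constant map $0$ agree after precomposition with $f$ but differ at $y_0$, contradicting epicity. The substantive endpoint is ``surjective $\Rightarrow$ regular epic'', which I would prove by exhibiting a continuous surjection $f$ as the coequaliser of its kernel pair. The kernel pair $R=\{(x,x')\mid f(x)=f(x')\}$ is closed in $X\times X$, being the preimage of the diagonal $\Delta_Y$ (closed since $Y$ is Hausdorff), hence a Boolean space, with projections $\pi_1,\pi_2\colon R\to X$ satisfying $f\circ\pi_1=f\circ\pi_2$. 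Given $k\colon X\to W$ in $\BS$ with $k\circ\pi_1=k\circ\pi_2$, i.e.\ $k$ constant on the fibres of $f$, surjectivity of $f$ yields a unique function $\bar k\colon Y\to W$ with $\bar k\circ f=k$; as $f$ is a closed continuous surjection it is a quotient map, so $\bar k$ is continuous, giving the required factorisation in $\BS$ and exhibiting $f$ as $\mathrm{coeq}(\pi_1,\pi_2)$.

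The main obstacle is concentrated in the two converse endpoints ``injective $\Rightarrow$ regular monic'' and ``surjective $\Rightarrow$ regular epic''; everything else is a one-line test with $\1$ or $\2$ or a formal categorical implication. I expect the monic direction to hinge on the clopen-intersection description of closed subsets (ensuring the equaliser lands in a genuine Boolean space $\2^{I}$), and the epic direction to hinge on the fact that a continuous surjection of compact Hausdorff spaces is a quotient map (so the set-theoretic factorisation through the kernel pair is automatically continuous). As an alternative sanity check, both converse directions can instead be read off from Stone duality \cite[Theorem 8.2]{Kop89}: a closed embedding (resp.\ a surjection) in $\BS$ corresponds to a surjection (resp.\ an injection) of Boolean algebras, which is a regular epi (resp.\ regular mono) in the variety $\BA$, and regularity is transported by the contravariant equivalence.
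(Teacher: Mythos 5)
Your proof is correct, and it takes a genuinely different route from the paper's. The paper handles the two substantive implications by external machinery: since $\BA$ is a variety, monos there are exactly the injective homomorphisms and regular epis exactly the surjective ones, and transporting these across Stone duality yields simultaneously ``epic $\Rightarrow$ surjective'' and ``injective $\Rightarrow$ regular monic'' in $\BS$; for ``surjective $\Rightarrow$ regular epic'' it then uses that $\BS$ is a reflective full subcategory of $\Top$ (\cref{r:reflective}), that inclusions of reflective full subcategories reflect regular epis, that regular epis in $\Top$ are precisely the quotient maps, and the Closed Map Lemma. You instead argue entirely inside $\BS$: you exhibit a closed embedding explicitly as an equaliser of two maps into the Boolean space $\2^{I}$, using that a closed subset of a Boolean space is the intersection of the clopens containing it (here compactness of $Z$ is not even needed: a basic clopen around $y$ inside $Y \setminus Z$, complemented, already separates), and you exhibit a surjection as the coequaliser of its kernel pair, continuity of the induced map being supplied by the quotient-map property of closed surjections; the easy endpoints you settle by testing against $\1$ and $\2$ rather than by duality. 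Both proofs are sound; the paper's is shorter given the cited results, while yours is self-contained and has the added merit of producing the (co)equaliser presentations concretely. One caveat concerns only your closing ``sanity check'': the assertion that an injective homomorphism of Boolean algebras is a \emph{regular} mono in $\BA$ does not follow from $\BA$ being a variety (in a general variety monos need not be regular --- think of $\mathbb{Z}\hookrightarrow\mathbb{Q}$ in rings), and under Stone duality that assertion is equivalent to the very implication ``surjective $\Rightarrow$ regular epic'' in $\BS$ it is meant to cross-check, so that half of the alternative argument is circular as stated; this is precisely why the paper proves that implication via the $\Top$-reflection rather than via duality. Since your main argument does not rely on the sanity check, nothing needs repair.
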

\begin{proof}
We start with some general considerations. Recall  (see, e.g., \cite[0.17]{AlgThe}) that in every category any regular monic arrow is extremal monic and every extremal monic is monic. Similarly, in every category, any regular epic arrow is extremal epic and extremal epic arrows are epic.
Furthermore, since the category $\BA$ of Boolean algebras with homomorphisms is a variety of algebras, in $\BA$ monic arrows are precisely the injective homomorphisms, and regular epic arrows are precisely the surjective homomorphisms. 
Applying Stone duality between $\BS$ and $\BA$, we obtain that in $\BS$ every epic arrow is surjective and every injective arrow is regular monic.

Now, regarding \cref{i:st-mono} it is easy to see that in $\BS$ every monic arrow is injective, so the proof of \cref{i:st-mono}  is settled.

To prove \cref{i:st-epi}, recall from \cref{r:reflective} that $\BS$ is a reflective full subcategory of $\Top$.
Since the inclusion functor of any reflective full subcategory reflects regular epimorphisms (see \cite[Exercise 7F.(c)]{AdaEtal2006} for a list of properties satisfied by the inclusion functor of reflective full subcategories), the forgetful functor from $\BS$ to $\Top$ reflects regular epimorphisms.
Recall that the regular epimorphisms in $\Top$ are precisely the topological quotient maps \cite[Examples 7.72.(2)]{AdaEtal2006}.
We deduce that a topological quotient map between Boolean spaces is a regular epic arrow in $\BS$.
It follows from the Closed Map Lemma that any surjective continuous map between Boolean spaces is a topological quotient map.
Therefore, every surjective arrow is regular epic in $\BS$ and \cref{i:st-epi} is settled.

Finally, an arrow is iso if and only if it is extremal monic and epic \cite[Proposition~7.66]{AdaEtal2006}; \cref{i:st-iso} follows.
\end{proof}

\begin{lemma}\label{l:arrows-in-MS} 
Let $f\colon X\to Y$ be an arrow in $\MS$.
\begin{enumerate}
\item\label{l:mono} $f$ is monic $\Leftrightarrow$ $f$ is injective.
\item\label{l:rmono} $f$ is regular monic $\Leftrightarrow$ $f$ is extremal monic $\Leftrightarrow$ $f$ is injective and preserves denominators.
\item\label{l:epi} $f$ is epic $\Leftrightarrow$ $f$ is surjective.
\item\label{l:repi} $f$ is regular epic $\Leftrightarrow$ $f$ is extremal epic $\Leftrightarrow$ $f$ is surjective and, for every $y \in Y$, we have $\zeta_Y(y) = \bigwedge_{x \in X : f(x) = y} \zeta_X(x)$.
\item\label{l:isom} $f$ is iso $\Leftrightarrow$ $f$ is bijective and preserves denominators.
\end{enumerate}
\end{lemma}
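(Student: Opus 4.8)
The strategy is to transport each property along the topological forgetful functor $U\colon \MS \to \BS$, reducing to the corresponding characterisation in $\BS$ (\cref{l:arrow-in-Stone}) and then reading off the effect on denominators from the explicit descriptions of $U$-initial families (\cref{c:initial}) and of (co)limits (\cref{l:limits,l:colimits-MS,l:final}). I would establish the items in the order \cref{l:mono}, \cref{l:epi}, \cref{l:isom}, and only afterwards the two ``regular/extremal'' items \cref{l:rmono,l:repi}, since the latter invoke the former.

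For \cref{l:mono} and \cref{l:epi}: being faithful, $U$ reflects monics and epics, and having both adjoints (\cref{r:adjoints}) it preserves limits and colimits, hence monics and epics; thus $f$ is monic (resp.\ epic) in $\MS$ iff $U(f)$ is monic (resp.\ epic) in $\BS$, which by \cref{i:st-mono} (resp.\ \cref{i:st-epi}) means $f$ injective (resp.\ surjective). For \cref{l:isom}: every functor preserves isos, so $f$ iso forces $U(f)$ bijective by \cref{i:st-iso}; moreover an inverse $g$ is itself denominator-decreasing, and combining $\zeta_X(x)\ge\zeta_Y(f(x))$ with $\zeta_Y(f(x))\ge\zeta_X(g(f(x)))=\zeta_X(x)$ shows $f$ preserves denominators. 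Conversely, if $f$ is bijective and preserves denominators, then $U(f)$ is iso in $\BS$ and its continuous inverse also preserves denominators, hence is an $\MS$-arrow inverse to $f$.

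The substance lies in \cref{l:rmono,l:repi}, where ``regular $\Rightarrow$ extremal'' is the general categorical fact recalled before the statement. For ``extremal $\Rightarrow$ denominator condition'' in \cref{l:rmono}, assume $f$ extremal monic (so injective by \cref{l:mono}) and put $\zeta_X'(x)\df\zeta_Y(f(x))$; this is continuous, so $(X,\zeta_X')$ is a multiset, the identity $e\colon(X,\zeta_X)\to(X,\zeta_X')$ is a surjective, hence epic (\cref{l:epi}), arrow, and $f=\tilde f\circ e$ with $\tilde f$ now preserving denominators. Extremality forces $e$ iso, so $\zeta_X=\zeta_X'$ by \cref{l:isom}, i.e.\ $f$ preserves denominators. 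Dually, for \cref{l:repi} I replace $Y$ by the $U$-final lift of the single arrow $U(f)$: by \cref{l:final} (the index set being a singleton, hence finite) its denominator is the continuous map $\zeta_Y'(y)=\bigwedge_{x:f(x)=y}\zeta_X(x)\ge\zeta_Y(y)$, the identity $m\colon(Y,\zeta_Y')\to(Y,\zeta_Y)$ is monic, and $f=m\circ\bar f$; extremality forces $m$ iso, whence $\zeta_Y=\zeta_Y'$, which is exactly the stated infimum condition.

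For the converse implications I exhibit $f$ as an (co)equaliser. If $f$ is injective and preserves denominators then $f$ is $U$-initial (\cref{c:initial}) and $U(f)$ is regular monic in $\BS$ (\cref{i:st-mono}), say the equaliser of $g_1,g_2\colon Y\rightrightarrows Z$; lifting $Z$ with the constant denominator $\zeta_1$ turns $g_1,g_2$ into $\MS$-arrows, and \cref{l:limits} gives their $\MS$-equaliser the denominator $\zeta_Y\vee 0=\zeta_Y$ on the underlying equaliser, so that $f$ is, up to an isomorphism onto it (\cref{l:isom}), this equaliser, hence regular monic. Symmetrically, if $f$ is surjective and satisfies the infimum condition, then $U(f)$ is regular epic in $\BS$ (\cref{i:st-epi}), the coequaliser of some $u,v\colon W\rightrightarrows X$; lifting $W$ with the constant denominator $\zeta_\infty$ turns $u,v$ into $\MS$-arrows, and the finite-colimit formula of \cref{l:colimits-MS} gives their coequaliser the denominator $\bigwedge_{x:f(x)=y}\zeta_X(x)=\zeta_Y(y)$ (the contribution of $W$ being the top element $\infty$), so $f$ is this coequaliser and is regular epic. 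The \emph{main obstacle} is continuity of the re-denominatored maps: while $\zeta_X'=\zeta_Y\circ f$ is manifestly continuous, the fibrewise infimum $y\mapsto\bigwedge_{f(x)=y}\zeta_X(x)$ of \cref{l:repi} is not pointwise-obvious to be continuous, and it is exactly the finiteness of the index set in \cref{l:final,l:colimits-MS} (here a singleton, resp.\ a coequaliser diagram), via the Closed Map Lemma, that secures it; this is the one place where the general colimit description is insufficient and the finite case must be used.
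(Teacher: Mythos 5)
Your proof is correct, and it reaches the same characterisations by the same overall strategy---transporting everything along the topological functor $U$ and reducing to \cref{l:arrow-in-Stone}---but it differs in how it handles the heart of the matter, namely \cref{l:rmono,l:repi}. The paper disposes of all five items by citing the general theory of topological functors (\cite[Proposition 21.13]{AdaEtal2006}): parts (1)--(3) of that proposition say that $U$ preserves and reflects monics and epics, and that an arrow of $\MS$ is extremal (resp.\ regular) monic/epic if and only if it is $U$-initial/$U$-final and extremal (resp.\ regular) monic/epic in $\BS$; combined with \cref{c:initial,l:final} this immediately yields \cref{l:rmono,l:repi}, and \cref{l:isom} follows from ``iso $=$ extremal monic $+$ epic''. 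You instead re-prove the relevant special cases of that proposition by hand: preservation/reflection of monics and epics via faithfulness and the two adjoints of \cref{r:adjoints}; ``extremal $\Rightarrow$ denominator condition'' via the re-denominatoring factorisations $f=\tilde f\circ e$ and $f=m\circ\bar f$, with extremality forcing the comparison to be iso; and ``denominator condition $\Rightarrow$ regular'' via explicit equaliser/coequaliser constructions, lifting the auxiliary $\BS$-object with the indiscrete denominator $\zeta_1$ (resp.\ the discrete $\zeta_\infty$) so that the finite-(co)limit formulas of \cref{l:limits,l:colimits-MS} produce exactly $\zeta_Y$ on the (co)equaliser---closing the cycle through the general fact that regular implies extremal. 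Your route is longer but self-contained: it makes visible why the transfer theorem holds in $\MS$, and your converse constructions in particular bypass the cited result entirely. You also correctly isolate the one genuinely delicate point, the continuity of the fibrewise infimum $y\mapsto\bigwedge_{f(x)=y}\zeta_X(x)$, which is exactly what the finite case of \cref{l:final} (via the Closed Map Lemma) secures; the paper relies on the same lemma at the same spot, just implicitly through the citation. One cosmetic divergence: you prove \cref{l:isom} directly (inverse of a denominator-decreasing bijection preserves denominators) and use it as input to \cref{l:rmono,l:repi}, whereas the paper derives it last as a corollary of the extremal items; both orderings are sound and free of circularity.
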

\begin{proof}
\Cref{l:epi,l:mono} are immediate consequences of the facts that topological functors preserve and reflect monic and epic arrows \cite[Proposition 21.13.(1)]{AdaEtal2006} and that, by \cref{l:arrow-in-Stone}, in $\BS$ monic arrows are precisely the injective ones and epic arrows are precisely the surjective ones.
Furthermore, if $G\colon \mathsf{A}\to \mathsf{X}$ is topological, then an arrow in $\mathsf{A}$ is an extremal (resp.\ regular) monic arrow if and only if it is $G$-initial and an extremal (resp.\ regular) monic arrow in $\mathsf{X}$ \cite[Proposition~21.13.(2)]{AdaEtal2006}.
By \cref{c:initial}, the arrow $f$ is initial if and only if it preserves denominators.
\Cref{l:rmono} follows.
Similarly, for a topological functor $G\colon \mathsf{A}\to \mathsf{X}$, an arrow in $\mathsf{A}$ is an extremal (resp.\ regular) epic arrow if and only if it is $G$-final and an extremal (resp.\ regular) epic arrow in $\mathsf{X}$ \cite[Proposition 21.13.(3)]{AdaEtal2006}.
By \cref{l:final}, an arrow $f$ in $\MS$ is $U$-final if and only if we have $\zeta_Y(y) = \bigwedge_{x \in X : f(x) = y} \zeta_X(x)$.
\Cref{l:repi} follows.
Finally, since an arrow is iso if and only if it is extremal monic and epic, we obtain \cref{l:isom}.
\end{proof}
\begin{remark}
Given a multiset $X$, a regular monic arrow in $\MS$ with codomain $X$ is, up to an iso, a closed subspace of $X$ with the induced denominator map.
\end{remark}

\subsection{Algebraic categories}
\label{ss:algebraic-categories}

In this subsection, we recall a well-known characterisation of those categories which are equivalent to some (quasi-)variety of algebras  (\Cref{t:char-quasi-varieties} below).

As customary in the study of algebraic category, see e.g.\ \cite{Adamek2004}, we admit \emph{infinitary} algebras, i.e.\ algebras with operations of infinite arity, and we allow \emph{large} signatures (i.e.\ a class, rather than a set).
In particular, we work with a large signature $\Sigma$ which is the union of the classes of $\kappa$-ary operations, for $\kappa$ cardinal.

A \emph{quasi-equation} is a (universally quantified) formula
\[ 
\left(\bigwedge_{i\in I}(u_i=v_i)\right)\Longrightarrow (u_0=v_0),
\]
where $I$ is a (possibly infinite) set, and $u_i$, $v_i$ are, for $i\in I\cup \{0\}$, terms over a given set of variables.

Following \cite{Adamek2004}, a class $\mathsf{A}$ of $\Sigma$-algebras is called a \emph{quasi-variety of $\Sigma$-algebras} (resp.\ \emph{a variety of algebras}) if
\begin{enumerate}
	\item \label{i:presented-by} the class $\mathsf{A}$ can be presented by a class of quasi-equations (resp.\ equations), and
	\item \label{i:free-algs} the class $\mathsf{A}$ has free algebras (equivalently, for each cardinal $\kappa$, the class $\mathsf{A}$ has only a set of isomorphism classes of algebras on $\kappa$ generators).
\end{enumerate}

We now provide the background needed to state the characterisation of varieties and quasi-varieties of algebras.
\begin{definition}\label{d:generator}
A set of objects $\GG$ is \emph{generating} provided that for each pair $f_1,f_2\colon K\rightrightarrows K'$ of distinct parallel arrows there exists an object $G\in \GG$ and an arrow $g\colon G\to K$ such that $f_1\circ g\neq f_2\circ g$ \cite[0.6]{AdaRos1994}.

In a category with co-products, a set of objects $\GG$ is generating if and only if, for every object $A$, the canonical arrow, obtained via an application of the universal property of co-products to the co-cone $\{h \colon G \to A\}_{G \in \GG,\, h \in \hom(G, A)}$,
\begin{equation}\label{eq:generator}
\sum_{G\in \GG,\,h \in \hom(G, A)}{G} \to A
\end{equation}
is epic.
\end{definition}
\begin{definition}\label{d:regular-generator}
A set of objects $\GG$ is \emph{regularly generating} if the hom-functors $\hom(G,-)$ (for $G \in \GG$) collectively reflect regular arrows.
In categories with co-products, this is equivalent (see \cite[Section 5.1]{Adamek2004}) to the fact that the canonical quotient in \eqref{eq:generator}  is regular epic.
As a special case, we have that an object $G$ is a \emph{regular generator} if the hom-functor $\hom(G,-)$ reflects regular epic arrows.
As observed in \cite[Section 1.1]{Adamek2004}, if the object $G$ has co-powers, this is equivalent to the following condition:
for every object $A$, the canonical arrow
\[
\sum_{\hom(G,A)}{G}\to A
\]
is regular epic.
\end{definition}
\begin{definition}
Recall that an object $P$ is called \emph{regular projective} if the hom-functor $\hom(P,-)$ preserves regular epics.
In other words, for any arrow $f\colon P\to B$ and every regular epic arrow $g\colon A\repi B$, the arrow $f$ factors through $g$, i.e., there exists $h$ such that the following diagram commutes:
\[
\begin{tikzpicture}

\node(A) at (1.5,1.5)   {$A$};
\node (B) at (1.5,0) {$B$};
\node (P) at (0,0) {$P$};

\draw [- open triangle 60] (A) -- (B) node [right, midway] {$g$};
\draw [->] (P) -- (B) node [below, midway] {$f$};
\draw [->, dashed] (P) -- (A) node [above, midway] {$h$};

\end{tikzpicture}
\]
\end{definition}
\begin{definition}
A set of objects $\GG$ is \emph{abstractly finite} if every arrow from an object of $\GG$ to a co-product of objects in $\GG$ factors through a finite sub-co-product.
As a particular case, an object $G$ is called \emph{abstractly finite} if every arrow from $G$ to a co-power of $G$ factors through a finite sub-co-power.
\end{definition}
\begin{definition}\label{d:finitely-presentable}
Following \cite[6.1]{MR0327863}, we say that an object $A$ in a category $\C$ is \emph{finitely presentable} if the covariant hom-functor $\hom{(A,-)}\colon$ $\C \to \Set$ preserves filtered co-limits.
Explicitly, this means that if $\mathsf{I}$ is a filtered category and $D\colon \mathsf{I}\to \C$ is a functor with co-limit co-cone $\{b_{i} \colon D(i) \to B\}_{i \in \mathsf{I}}$,  then for every arrow $f \colon A \to B$ in $\C$ the following two conditions are satisfied:
\begin{enumerate}
\item[\namedlabel{e:factorization}{(F)}] There is $g \colon A \to D(i)$  such that $f = b_{i}\circ g$.
\item[\namedlabel{e:essential-unique}{(E)}] For any $g',g'' \colon A \to D(j)$  such that $f = b_{j}\circ g' = b_{j}\circ g''$,  there is  $d_{jk} \colon j \to k$ such that  $D(d_{ij}) \circ g' = D(d_{ij})\circ g''$. 
\end{enumerate}
\begin{center}
\begin{tikzpicture}[descr/.style={fill=white}]
\matrix(m)[matrix of math nodes, row sep=4em, column sep=4em, text height=1.5ex, text depth=0.25ex, ampersand replacement=\&]{
A\& B\\\& D(k)\\\&D(j)\\
};
\path[<-, dashed] (m-2-2) edge node[right] {$D(d_{ij})$} (m-3-2);
\path[->] (m-1-1) edge node[above] {$f$} (m-1-2);
\path[->] (m-1-1) edge  [bend right=50] node[left] {$g'$} (m-3-2);
\path[->] (m-1-1) edge  [bend right=30] node[right] {$g''$} (m-3-2);
\path[->, dashed] (m-1-1) edge    [bend right=20] (m-2-2);
\path[->] (m-2-2) edge [bend right=60] node[right] {$b_k$} (m-1-2);
\path[->] (m-3-2) edge [bend right=90] node[right] {$b_j$} (m-1-2);
\end{tikzpicture}
\end{center}
Similarly, after \cite[6.1]{MR0327863}, we say that $A$  is \emph{finitely generated} if  $\hom{(A,-)}\colon \C \to \Set$ preserves filtered co-limits of diagrams $(P_{i}, p_{ij})$ all of whose transition arrows $p_{ij}$ are monic in $\C$.
\end{definition}
\begin{lemma}[\mbox{\cite[Proposition 2.4]{pedicchio2000abstract}}] \label{l:abstractly-finite-vs-finitely-generated}
If $G$ is an abstractly finite, regular projective, regular generator, then $G$ is finitely generated.
Vice versa, if $G$ is finitely generated and has co-powers, then $G$ is abstractly finite.
\end{lemma}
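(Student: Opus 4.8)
The plan is to verify the two implications separately, using the description of finite generation in \cref{d:finitely-presentable} as preservation of filtered colimits along monic transition maps. The converse implication is the cleaner one, so I would start there. Given a co-power $\coprod_{j\in J}G$, I would present it as the filtered colimit of its finite sub-co-powers: order the nonempty finite subsets $J_0\seq J$ by inclusion (an up-directed poset) and take as transition maps the canonical inclusions $\coprod_{J_0}G\to\coprod_{J_1}G$. The universal property of co-products identifies this colimit with $\coprod_{j\in J}G$, and each inclusion is split monic (collapse the extra summands onto a fixed coordinate of $J_0$ to build a retraction), so the diagram has monic transitions. As $G$ is finitely generated, $\hom(G,-)$ preserves the colimit, and the factorisation property \ref{e:factorization} gives that every $f\colon G\to\coprod_{j\in J}G$ factors through a finite stage $\coprod_{J_0}G$ --- which is precisely abstract finiteness.

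For the forward implication, let $D\colon\mathsf I\to\C$ be a filtered diagram with monic transition maps $p_{ij}$ and colimit cocone $\{b_i\colon D(i)\to B\}$; I must check \ref{e:factorization} and \ref{e:essential-unique}. For \ref{e:factorization} the plan is to realise $B$ as a regular quotient of a co-power of $G$ whose summands factor through the legs $b_i$, so that abstract finiteness lands inside the diagram. Each $D(i)$ is a regular quotient $\coprod_{\hom(G,D(i))}G\epi D(i)$ since $G$ is a regular generator with co-powers; assembling these and composing with the canonical regular epimorphism $\coprod_i D(i)\epi B$ (a co-equaliser, because $B$ is a filtered colimit) produces a regular epimorphism $e\colon P\epi B$ out of the co-power $P\df\coprod_i\coprod_{\hom(G,D(i))}G$. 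Given $f\colon G\to B$, regular projectivity lifts it to $\tilde f\colon G\to P$ with $e\circ\tilde f=f$; abstract finiteness factors $\tilde f$ through finitely many summands $(i_1,h_1),\dots,(i_n,h_n)$, with $h_k\colon G\to D(i_k)$, on which $e$ restricts to $b_{i_k}\circ h_k$. Filteredness yields a common upper bound $j$ of the $i_k$, and then $b_{i_k}\circ h_k=b_j\circ(p_{i_kj}\circ h_k)$, so $f$ factors as $b_j\circ g$ for a single $g\colon G\to D(j)$. Notably this uses filteredness but not monicity of the transitions.

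Monicity enters only in \ref{e:essential-unique}: if $g',g''\colon G\to D(j)$ satisfy $b_j\circ g'=b_j\circ g''$, it is enough that the leg $b_j$ be monic, whence $g'=g''$ and the identity transition witnesses \ref{e:essential-unique}; and the legs of a filtered colimit of monomorphisms are indeed monic. The main obstacle is the factorisation \ref{e:factorization}. The tempting shortcut --- lifting $f$ along the regular epimorphism $\coprod_{\hom(G,B)}G\epi B$ furnished by the regular generator --- is circular, because it expresses $f$ through arbitrary maps $G\to B$ that have no reason to factor through a stage. The construction must instead cover $B$ by a co-power built from the $\coprod_{\hom(G,D(i))}G$, so that the summands factor through the $b_i$ by design. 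The cost is that one forms $\coprod_i D(i)$ and invokes stability of regular epimorphisms under co-products and composition, which hold in the co-complete, suitably exact ambient in which the statement is used.
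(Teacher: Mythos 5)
Your converse direction is correct and is the standard argument: presenting $\coprod_{j\in J}G$ as the filtered colimit of its nonempty finite sub-co-powers, with split monic inclusions, and invoking \ref{e:factorization} is exactly right. In the forward direction, your verification of \ref{e:factorization} is also essentially sound, but as written it relies on regular epimorphisms being closed under composition, which fails in general categories and is not among the hypotheses. This blemish is easily repaired without any exactness assumption: since $G$ is regular projective, lift $f$ in two stages --- first along the coequaliser $\coprod_i D(i)\epi B$, then along $\coprod_i \pi_i$, which is regular epic because coproducts commute with coequalisers --- so that no composite ever needs to be regular epic. (You also use coproducts beyond co-powers of $G$; this is harmless in the co-complete settings where the lemma is applied, but it is an assumption beyond the bare statement.)

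The genuine gap is in \ref{e:essential-unique}. The fact you assert --- that the legs of a filtered colimit of monomorphisms are monic --- is simply false in a general category: in $\Set^{\op}$, a filtered colimit of monos is a co-directed limit of surjections in $\Set$, and there are classical inverse systems of nonempty sets with surjective bonding maps over a directed poset whose limit is empty, so the legs are as far from monic (in $\Set^{\op}$) as possible. Moreover, as you in effect observe, when the transitions are monic, condition \ref{e:essential-unique} is \emph{equivalent} to the legs being monic on $G$-points, so the difficulty cannot be routed around: any proof of the forward implication must \emph{derive} this leg-injectivity from regular projectivity, regular generation and abstract finiteness (or else restrict to ambient categories, such as locally presentable ones or finitary quasi-varieties, where filtered colimits of monos are known to have monic legs --- a restriction not present in the statement), and nothing in your argument does so. This is precisely where the substance of the result lies; note that the paper itself offers no proof at all, deferring entirely to \cite[Proposition 2.4]{pedicchio2000abstract}, whose treatment of finite generation proceeds through directed families of subobjects, for which monicity of the legs is available by construction. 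Your appeal to a ``suitably exact ambient'' at this point imports hypotheses absent from the lemma, so the forward direction remains unproved as written.
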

\begin{definition}[See, e.g., \mbox{\cite[Definition 3.12]{AlgThe}}]\label{d:equivalence-relation}
Let $\C$ be a category with finite limits and $A$ an object of $\C$.
An \emph{(internal) equivalence relation} on $A$ is a subobject 
$\langle p_0,p_1\rangle\colon R\mono A\times A$ satisfying the following properties:
\begin{description}
\item[reflexivity] there exists an arrow $d\colon A\to R$ in $\C$ such that the following diagram commutes:
\[
\begin{tikzcd}
A \arrow[swap,tail]{rd}{\langle 1_A,1_A\rangle} \arrow[dashed]{rr}{\exists d}&& R \arrow[tail]{dl}{\langle p_0,p_1\rangle}\\
& A \times A&
\end{tikzcd}
\]
\item[symmetry] there exists an arrow $s\colon R\to R$ in $\C$ such that the following diagram commutes:
\[
\begin{tikzcd}
R \arrow[dashed]{rr}{\exists s} \arrow[tail,swap]{dr}{\langle p_1,p_0\rangle}&& R \arrow[tail]{dl}{\langle p_0,p_1\rangle}\\
& A \times A&
\end{tikzcd}
\]
\item[transitivity] if the left-hand diagram below is a pullback square in $\C$, then there is an arrow $t\colon P\to R$ such that the right-hand diagram commutes.
\[
\begin{tikzcd}
P \arrow{r}{\pi_1} \arrow[swap]{d}{\pi_0} \arrow[dr, phantom, "\lrcorner", very near start]& R \arrow{d}{p_0} \\
R \arrow[swap]{r}{p_1} & A
\end{tikzcd}
\quad \quad
\begin{tikzcd}
P \arrow[swap]{rd}{\langle p_0\circ \pi_0,p_1\circ\pi_1\rangle} \arrow[dashed]{rr}{\exists t}&& R \arrow[tail]{dl}{\langle p_0,p_1\rangle}\\
& A \times A&
\end{tikzcd}
\]
\end{description}
\end{definition}
\begin{definition}[See, e.g., \mbox{\cite[0.15 and Definition 3.14]{AlgThe}}]\label{d:effective-exact}
Recall that the \emph{kernel pair} of an arrow $f\colon X\to Y$ is the pair of arrows $R \rightrightarrows X$ in the pullback of $f$ along itself:
\[
\begin{tikzcd}
R \arrow{r}\arrow{d} \arrow[dr, phantom, "\lrcorner" , very near start, color=black]& X\arrow{d}{f}\\
X \arrow[swap]{r}{f}& Y.
\end{tikzcd}
\]
It is folklore that every kernel pair is an equivalence relation.
An equivalence relation $\langle p_0,p_1\rangle\colon R\mono A\times A$ is \emph{effective} if there exists an arrow $q\colon A\to S$ such that $p_0,p_1\colon R\rightrightarrows A$ is the kernel pair of $q$.
\end{definition}
For varieties and quasi-varieties of algebras, the definition of equivalence relation given above coincides with the usual notion of congruence, while the effective equivalence relations in quasi-varieties are the so-called \emph{relative} congruences.

We have finally collected all necessary background to recall some well-known characterisations of varieties and other classes of algebras.
\begin{theorem}\label{t:char-quasi-varieties}
Let $\C$ be a (locally small) category.
\begin{enumerate}

\item \label{t:quasi-variety}
$\C$ is equivalent to a quasi-variety of  algebras if and only if
\begin{enumerate}
\item $\C$ is co-complete, and
\item $\C$ has a regular projective regular generator.
\end{enumerate}
\item \label{t:quasi-variety-finitary}
$\C$ is equivalent to a quasi-variety of finitary algebras if and only if
\begin{enumerate}
\item $\C$ is co-complete, and
\item $\C$ has an abstractly finite, regular projective regular generator.
\end{enumerate}
\item \label{t:quasi-variety-many-sorted}
$\C$ is equivalent to a many-sorted quasi-variety of finitary algebras if and only if
\begin{enumerate}
\item $\C$ is co-complete, and
\item $\C$ has an abstractly finite, regularly generating set of regular projective objects.
\end{enumerate}
\item \label{t:quasi-variety-effective}
All items above remain true if we replace ``quasi-variety'' with ``variety'' on the left side of the equivalence and add the condition ``(c) every equivalence relation in $\C$ is effective'' on the right one.
\end{enumerate}
\end{theorem}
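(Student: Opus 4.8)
The plan is to assemble the statement from the categorical recognition theorems for (quasi-)varieties that are by now standard: each item is an instance of a result of \cite{Adamek2004} for the quasi-variety part, upgraded by an exactness argument for the variety part, so the work lies in matching our hypotheses to theirs and tracking the finiteness and sortedness bookkeeping.

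First I would treat the quasi-variety items (1)--(3). The backbone is the recognition theorem that a cocomplete category is equivalent to a one-sorted, possibly infinitary, quasi-variety exactly when it has a regular projective regular generator $G$. The forward implication is routine: in any quasi-variety the free algebra $F(1)$ on one generator is regular projective (any homomorphism out of a free algebra lifts along a surjective, i.e.\ regular epic, homomorphism), is a regular generator (a homomorphism is surjective precisely when $\hom(F(1),-)$ witnesses it as regular epic), and quasi-varieties are cocomplete. For the converse one reconstructs a signature and its quasi-equational theory from $G$: a $\kappa$-ary operation symbol is taken to be an element of $\hom(G,\kappa\cdot G)$, the $\kappa$-fold copower of $G$, and the regular-generator and regular-projectivity hypotheses guarantee that $\hom(G,-)\colon \C\to\Set$ presents $\C$ as the models of the quasi-equations holding among these operations, while cocompleteness supplies the free algebras demanded by clause (ii) of the definition of quasi-variety. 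To specialise to the finitary one-sorted case (item 2) I would invoke \cref{l:abstractly-finite-vs-finitely-generated}: an abstractly finite regular projective regular generator is finitely generated, and finite generation forces each operation in $\hom(G,\kappa\cdot G)$ to factor through a finite sub-copower, so the reconstructed signature is finitary. Item (3) is the identical argument carried out with a regularly generating \emph{set} $\GG$ of regular projectives in place of a single $G$: the elements of $\GG$ index the sorts, and abstract finiteness of the set again forces every operation to have finite arity.

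For item (4) I would rely on the classical principle that, within this framework, varieties are precisely the \emph{exact} quasi-varieties. Concretely, a category already known via (1)--(3) to be a quasi-variety is equivalent to a variety if and only if every internal equivalence relation, in the sense of \cref{d:equivalence-relation}, is effective, i.e.\ is a kernel pair (\cref{d:effective-exact}). Syntactically this says that the relative congruences of the quasi-variety are \emph{all} the congruences, which is exactly what turns a purely quasi-equational presentation into an equational one (equivalently, what makes $\hom(G,-)$ monadic rather than merely premonadic). Hence appending condition (c) to each of (1)--(3) promotes the quasi-variety characterisation to its variety counterpart, leaving the finiteness and sortedness bookkeeping untouched.

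The main difficulty is less any single deep step than the bookkeeping required to align the categorical hypotheses with the syntactic notions in our deliberately permissive setting, with arbitrarily large arities and large signatures alongside the finitary/infinitary and one-sorted/many-sorted dichotomies. In particular one must check that the reconstruction of operation symbols as elements of $\hom(G,\kappa\cdot G)$ stays within a legitimate, possibly proper-class, signature; that clause (ii) of the definition of quasi-variety---the existence of free algebras, equivalently the existence of only a set of algebras on $\kappa$ generators---really follows from cocompleteness together with the generator hypotheses; and that the condition ``every equivalence relation is effective'' is correctly matched with the effectiveness of \cref{d:effective-exact}. All of these points are handled in \cite{Adamek2004} and \cite{AlgThe}, so once the dictionary between our terminology and theirs is fixed the proof is completed by citing those sources.
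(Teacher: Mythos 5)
Your proposal is correct and matches the paper's approach: the paper likewise proves items (1)--(3) by citing Ad\'amek's characterisation theorems \cite[Characterisation Theorem 1, Theorem 3.6, Theorem 5.2]{Adamek2004} (noting only that Ad\'amek removed Isbell's extra hypothesis of equalisers from item (2)), and item (4) by the same standard fact you invoke, that a quasi-variety is a variety exactly when all its equivalence relations are effective \cite[Corollary 3.25]{AdaRos1994}. Your additional sketch of how the signature is reconstructed from $\hom(G,\kappa\cdot G)$ and how \cref{l:abstractly-finite-vs-finitely-generated} handles the finitary case is a faithful outline of what those cited proofs do, not a divergence from them.
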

\begin{proof}
\Cref{t:quasi-variety-finitary}, with the additional requirement that $\C$ has equalisers, was proved by Isbell in \cite[Theorem 5.2]{isbell1964subobjects}.
However, Ad\'{a}mek \cite[Theorem 3.6]{Adamek2004} proved that such an assumption can be dropped (and proved a characterisation stronger than the one presented here).
\Cref{t:quasi-variety,t:quasi-variety-many-sorted} can also be found in \cite[Characterisation Theorem 1 and Theorem 5.2]{Adamek2004}.
Finally, \cref{t:quasi-variety-effective} holds because in every variety equivalence relations are effective and, vice versa,  if in a quasi-variety equivalence relations are effective, then it is a variety (see e.g., \cite[Corollary 3.25]{AdaRos1994}).
\end{proof}
\begin{remark}\label{r:card-sorts-arity}
In the proof of \cref{t:quasi-variety-many-sorted} in \cref{t:char-quasi-varieties}, the number of objects in the regularly generating set corresponds to the number of sorts.
In the proof of \cref{t:quasi-variety} in \cref{t:char-quasi-varieties}, every operation in the variety depends on at most countably many coordinates if and only if every arrow from the regular projective regular generator $G$ to a co-power of $G$ factors through an at most countable sub-co-power.
\end{remark}

\section{The case of a finite language}
\label{s:finite-language}

In this section we prove that the category $\MVlf$ of locally finite MV-algebras is not equivalent to any quasi-variety in a language with finitary operations and finitely many sorts; the other weaker results stated in the introduction will follow at once. To achieve this we will prove:
\begin{enumerate}
	\item \label{i:fin-co-gen-implies-fin-more-sorts}
		every finitely co-generated object in $\MS$ has a finite underlying set;
	\item \label{i:reg-co-gen-implies-infin-more-sorts:2}
		every regularly co-generating set $\GG$ contains either infinitely many objects or an object whose underlying set is infinite.
\end{enumerate}
\Cref{i:fin-co-gen-implies-fin-more-sorts,i:reg-co-gen-implies-infin-more-sorts:2} together imply that $\MS$ admits no regularly co-generating finite set of finitely co-generated objects,  and thus the dual statement holds for $\MVlf$.  An application of \cref{t:char-quasi-varieties} leads to the desired results.
We begin by settling \cref{i:fin-co-gen-implies-fin-more-sorts}.
\begin{proposition}\label{p:how-finitely-cogenerated}
	For a multiset $X$ the following are equivalent:
	\begin{enumerate}
		\item \label{i:finite}					$X$ is a finite set and every element of $X$ has finite denominator;
		\item \label{i:finitely-co-presentable}	$X$ is finitely co-presentable in $\MS$;
		\item \label{i:finitely-co-generated}	$X$ is finitely co-generated in $\MS$.
	\end{enumerate}
\end{proposition}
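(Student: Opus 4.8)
The plan is to prove the cycle \ref{i:finite} $\Rightarrow$ \ref{i:finitely-co-presentable} $\Rightarrow$ \ref{i:finitely-co-generated} $\Rightarrow$ \ref{i:finite}. The implication \ref{i:finitely-co-presentable} $\Rightarrow$ \ref{i:finitely-co-generated} is the general fact that a finitely co-presentable object is finitely co-generated: reading \cref{d:finitely-presentable} in $\MS^{\op}$, co-presentability asks $\hom(-,X)$ to turn \emph{every} cofiltered limit into a filtered colimit, whereas co-generation asks this only for the cofiltered limits whose transition arrows are epic, a sub-class of cases. Throughout I will use \cref{l:limits} to compute a cofiltered limit $L=\lim_i D(i)$: its underlying space is the inverse limit of the $U(D(i))$, and $\zeta_L(\ell)=\bigvee_i \zeta_{D(i)}(l_i(\ell))$. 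I will also use that $\NN$ is the ideal completion of $(\Nnot,\mathrm{div})$, so the \emph{finite} supernatural numbers are exactly its compact elements: if $\nu$ is finite and $\nu\le\bigvee_i\mu_i$ for an up-directed family, then $\nu\le\mu_i$ for some $i$.

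For \ref{i:finite} $\Rightarrow$ \ref{i:finitely-co-presentable}, fix a finite $X$ with finite denominators and a cofiltered diagram $D\colon\mathsf I\to\MS$ with limit cone $\{l_i\colon L\to D(i)\}$; I must show the comparison map $\operatorname{colim}_i\hom(D(i),X)\to\hom(L,X)$, sending $g\colon D(i)\to X$ to $g\circ l_i$, is a bijection. For \emph{surjectivity}, take $f\colon L\to X$; since $X$ is finite discrete, the fibres $f^{-1}(x)$ form a finite clopen partition of $L$. As Stone duality turns the inverse limit $U(L)=\lim_i U(D(i))$ into the filtered colimit of the corresponding clopen algebras, this partition is pulled back along some $l_{i_0}$ from a clopen partition $\{B_x\}$ of $D(i_0)$, yielding a \emph{continuous} $g_0\colon D(i_0)\to X$ with $f=g_0\circ l_{i_0}$. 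For \emph{injectivity}, if $g,g'\colon D(i)\rightrightarrows X$ satisfy $g\circ l_i=g'\circ l_i$, the disagreement set $E=\{w\mid g(w)\neq g'(w)\}$ is clopen (again since $X$ is finite discrete) and has $l_i^{-1}(E)=\emptyset$, i.e.\ $E$ is disjoint from $l_i(L)=\bigcap_{j\to i}\pi_{ji}(D(j))$; as $E$ is compact and the images form a co-directed family of compacta, $E$ meets none of some $\pi_{ji}(D(j))$, whence $g\circ\pi_{ji}=g'\circ\pi_{ji}$ and the two classes agree in the colimit.

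The main obstacle is upgrading the continuous $g_0$ above to an \emph{arrow of $\MS$}, i.e.\ making it decrease denominators. Writing $d_x\df\zeta_X(x)$, which is finite, I need a stage at which \emph{every} point of $B_x$ has denominator $\ge d_x$ — not merely the points in the image $l_{i_0}(L)$. For each $\ell\in f^{-1}(x)$ one has $\bigvee_i\zeta_{D(i)}(l_i(\ell))=\zeta_L(\ell)\ge\zeta_X(f(\ell))=d_x$, so compactness of $d_x$ yields a stage where $\zeta_{D(i)}(l_i(\ell))\ge d_x$. Consequently the closed ``bad'' sets $\{w\in B_x\mid\zeta_{D(i)}(w)\not\ge d_x\}$ have $L$-preimages that are down-directed with empty intersection, so compactness of $L$ makes one such preimage empty; a second application of compactness to the co-directed family of images $\pi_{ji}(D(j))$ then shows the bad set itself is pulled back to $\emptyset$ at a later stage. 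Passing to a common later stage for the finitely many $x$ makes $g_0$ decrease denominators, completing the surjectivity step.

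For \ref{i:finitely-co-generated} $\Rightarrow$ \ref{i:finite} I argue by contraposition, exhibiting in each case a cofiltered limit with \emph{epic} transitions that $\hom(-,X)$ fails to preserve; the crucial point is that in $\MS$ epic means merely surjective (\cref{l:arrows-in-MS}\ref{l:epi}), so the diagrams below qualify although their transitions are not regular epic. If $U(X)$ is infinite, index by the finite clopen partitions $P$ of $X$ and let $D(P)$ be the quotient space of blocks with $\zeta_{D(P)}(B)\df\bigwedge_{x\in B}\zeta_X(x)$; the quotient and transition maps are surjective and \cref{l:limits} identifies the limit with $X$, yet $\mathrm{id}_X$ factors through no $D(P)$ because a finite space admits no surjection onto the infinite $U(X)$. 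If instead $U(X)$ is finite but some $x_0$ has $\zeta_X(x_0)(p)=\infty$, keep the space fixed and let $D(k)$ carry the denominator obtained from $\zeta_X$ by truncating every infinite value at level $k$; the identity maps $D(k+1)\to D(k)$ are surjective (hence epic) and decrease denominators, the limit is again $X$, but $\mathrm{id}_X$ factors through no $D(k)$ since the truncated denominator at $x_0$ is finite while $\zeta_X(x_0)(p)=\infty$. In both cases $\mathrm{id}_X\in\hom(L,X)$ is not in the image of the comparison map, so $X$ is not finitely co-generated, as required.
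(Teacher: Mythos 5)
Your implications \ref{i:finite} $\Rightarrow$ \ref{i:finitely-co-presentable} $\Rightarrow$ \ref{i:finitely-co-generated} are correct and take a genuinely different, more self-contained route than the paper, which simply invokes the fact that $\MS$ is the pro-completion of the category of finite multisets with finite denominators (Johnstone, Lemma VI.1.8). Your direct verification is sound: the family $\{\zeta_{D(i)}(l_i(\ell))\}_{i}$ is indeed up-directed because transition arrows decrease denominators, the finite supernatural numbers are exactly the compact elements of the ideal completion $\NN$, and the identity $l_i(L)=\bigcap_{j\to i}\pi_{ji}(D(j))$ for co-directed limits of Boolean spaces justifies both your injectivity step and the passage from ``bad set misses the image of $L$'' to ``bad set is killed at a later stage''. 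For \ref{i:finitely-co-generated} $\Rightarrow$ \ref{i:finite} the paper instead cites the theorem of Cignoli--Dubuc--Mundici that every multiset is a co-filtered limit of finite multisets with epic transitions and factors $1_X$ through one stage; your plan of building explicit witnessing diagrams by hand is legitimate and your Case A (finite clopen partitions when $U(X)$ is infinite) is correct, including the denominator computation via \cref{l:limits}.

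However, there is a genuine gap in your contraposition: your two cases do not exhaust the negation of \ref{i:finite}. By the paper's definition, $\nu\in\NN$ is \emph{finite} iff $\infty$ is not in the range of $\nu$ \emph{and} $\nu(p)\neq 0$ for only finitely many $p$. Hence a finite multiset can fail \ref{i:finite} with all values of $\zeta_X(x_0)$ finite but with infinite support, e.g.\ $X=\{x_0\}$ with $\zeta_X(x_0)(p)=1$ for every $p\in\P$. Your Case B, which truncates only the \emph{infinite values} of $\zeta_X$ at level $k$, leaves such a $\zeta_X$ untouched: the diagram is constant at $X$, its limit is $X$, and $1_X$ factors through every stage, so no obstruction arises; Case A does not apply since $U(X)$ is finite. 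The repair is easy and subsumes your Case B: define $\zeta_k(x)(p)\df\min(\zeta_X(x)(p),k)$ when $p$ is among the first $k$ primes and $\zeta_k(x)(p)\df 0$ otherwise. Then $\zeta_k\leq\zeta_{k+1}\leq\zeta_X$, so the identity functions give a co-chain with surjective (hence, by \cref{l:arrows-in-MS}, epic) denominator-decreasing transitions; $\bigvee_k\zeta_k=\zeta_X$ pointwise, so the limit is $X$ by \cref{l:limits}; and $1_X$ factors through no stage, since a factorisation would force the identity function to be an arrow $(X,\zeta_k)\to(X,\zeta_X)$, i.e.\ $\zeta_k(x_0)\geq\zeta_X(x_0)$, which fails because $\zeta_k(x_0)$ is a finite supernatural number while $\zeta_X(x_0)$ is not. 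With this correction your argument covers the whole negation of \ref{i:finite} and the proof is complete.
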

\begin{proof}
	
	[\Cref{i:finite} $\Rightarrow$ \cref{i:finitely-co-presentable}]
	This follows from the fact that $\MS$ is the pro-completion of the category of multisets with a finite underlying set and finite denominators (see \cite[Lemma VI.1.8, p.\ 231]{Joh86}).
	
	[\Cref{i:finitely-co-presentable} $\Rightarrow$ \cref{i:finitely-co-generated}]
	This holds trivially.
	
	[\Cref{i:finitely-co-generated} $\Rightarrow$ \cref{i:finite}]
	Let $X$ be a finitely co-generated multiset. 
	By \cite[Theorem 5.1]{CigDubMun04}, every multiset is a limit of a co-filtered system of finite multisets and epic transition arrows. Let $F\colon \mathsf{D}\to \MS$ be such a system for $X$ and let $f_D\colon X\to F(D)$ be the corresponding limit arrows, for $D\in \mathsf{D}$.
	Since $X$ is finitely co-generated, by (the dual version of) \ref{e:factorization} in \cref{d:finitely-presentable}, there exists $D\in \mathsf{D}$ such that the identity $1_X\colon X\to X$ factors through $f_D\colon X\to F_D$ via a suitable arrow $g\colon F_D\to X$.
	Since $F_D$ has finite cardinality and $g$ is surjective, $X$ must also have finite cardinality. Since $F_D$ has finite denominators and $g$ decreases denominators, the multiset $X$ has finite denominators, too.
\end{proof}
We now address \cref{i:reg-co-gen-implies-infin-more-sorts:2} at the beginning of the section. 
\begin{definition}[Dual to \cref{d:generator}]\label{d:co-generator}
A set of objects $\GG$ in a category with products is called a \emph{set of co-generators}  if one of the two following equivalent conditions are satisfied.
\begin{enumerate}[(i)]
\item\label{d:co-generator:item1} For each pair of distinct arrows $f_{1},f_{2}\colon X\rightrightarrows X'$, there exists $G\in \GG$ and an arrow $g\colon X'\to G$ such that $g\circ f_{1}\neq g\circ f_{2}$.
\item\label{d:co-generator:item2} The canonical arrow
\begin{equation}\label{eq:co-generator}
X \to \prod_{G \in \GG,\,t \in \hom(X,G)} G
\end{equation}
is monic.
\end{enumerate}   
\end{definition} 
In \cref{t:how-reg-cogen} we will prove that a set $\GG$ of objects in $\MS$ is regularly co-generating if and only if, for every $n \in \{1\} \cup \{p^k \mid p \in \P, k \in \Nnot\}$, some $G \in \GG$ has two distinct elements of denominator $\nu_n$ and $\nu_1$, respectively.
\begin{notation}
We write $\2$ for the multiset $(\{0,1\},\zeta_{\2})$ where $\{0,1\}$ is endowed with the discrete topology and $\zeta_{\2}(0)=\zeta_{\2}(1)=\nu_1$.
\end{notation}
\begin{lemma}\label{l:2-cogenerator}
The multiset $\2$ is a co-generator in $\MS$.
\end{lemma}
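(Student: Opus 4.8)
The plan is to verify the defining property of a co-generator from \cref{d:co-generator} directly, exploiting the fact that arrows into $\2$ are as plentiful as possible.

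First I would identify the arrows of $\MS$ with codomain $\2$. An arrow $g\colon (X,\zeta_X) \to \2$ is a continuous map $g \colon X \to \{0,1\}$ (where $\{0,1\}$ carries the discrete topology) satisfying $\zeta_X(x) \geq \zeta_{\2}(g(x))$ for all $x \in X$. Since $\zeta_{\2}(0) = \zeta_{\2}(1) = \nu_1$ and $\nu_1$ is the least element of the lattice $\NN$, this inequality holds automatically. Hence the arrows $X \to \2$ are \emph{exactly} the continuous maps $X \to \{0,1\}$, which correspond bijectively to the clopen subsets of $X$ via $g \mapsto g^{-1}[\{1\}]$.

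Next, to establish the separating condition, let $f_1, f_2 \colon X \rightrightarrows X'$ be distinct arrows. Since arrows of $\MS$ are in particular functions between the underlying sets, there is a point $x \in X$ with $f_1(x) \neq f_2(x)$; set $y_1 \df f_1(x)$ and $y_2 \df f_2(x)$. As $X'$ is a Boolean space, its clopen subsets separate points, so there is a clopen set $C \seq X'$ with $y_1 \in C$ and $y_2 \notin C$. The characteristic function $g \df \chi_C \colon X' \to \{0,1\}$ is continuous, hence an arrow $X' \to \2$ by the previous paragraph, and $(g \circ f_1)(x) = 1 \neq 0 = (g \circ f_2)(x)$, so $g \circ f_1 \neq g \circ f_2$. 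This is precisely what is required for $\2$ to be a co-generator.

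Equivalently, one could verify the monicity of the canonical arrow $X \to \prod_{t \in \hom(X,\2)} \2$: by \cref{l:arrows-in-MS} the monic arrows in $\MS$ are the injective ones, so this amounts to showing that the family of all arrows $X \to \2$ separates the points of $X$ --- once more just the statement that clopen subsets separate points in a Boolean space. There is no genuine obstacle here; the crux is the observation that the minimal denominator $\nu_1$ on $\2$ renders the denominator constraint on arrows into $\2$ vacuous, reducing the claim to the separation property built into the definition of a Boolean space.
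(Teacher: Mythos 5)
Your proposal is correct and follows essentially the same route as the paper's own proof: both pick a point where the two arrows differ, separate its images by a clopen set $C$ of the Boolean space $X'$, and use the characteristic function $X' \to \2$ (an arrow of $\MS$ since the denominator constraint is vacuous for $\nu_1$) to distinguish the composites. Your preliminary remark that arrows into $\2$ are exactly the continuous maps into $\{0,1\}$ is a harmless elaboration of what the paper states in passing (``clearly continuous, denominators decreasing'').
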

\begin{proof} We check that \cref{d:co-generator:item1} in \cref{d:co-generator} holds.
Let $f_1,f_2\colon X\rightrightarrows X'$ be a pair of distinct arrows in $\MS$.
Then there exists $x \in X$ such that $f_1(x) \neq f_2(x)$.
Since $X'$ is Hausdorff and has a basis of clopen sets, it follows that there exists a clopen subset $C$ of $X'$ such that $f_1(x) \in C$ and $f_2(x) \notin C$.
The characteristic function of $C$ $1_C\colon X\to\2$ is clearly continuous, denominators decreasing, and such that $1_C(f_1(x)) = 1 \neq 0= 1_C(f_2(x))$, so $1_C \circ f_1 \neq 1_C \circ f_2$.
\end{proof} 
\begin{remark}\label{r:projections-jointly-monic}
Recall that the projection arrows from a product are jointly monic, i.e., if $f,g\colon X\to \prod_{i\in I}Y_{i}$ are distinct arrows, then there exists $i\in I$ such that $\pi_{i}\circ f=\pi_{i}\circ g$ (This is a consequence of the fact that each cone on $\{Y_{i}\}_{i\in I}$ factors in a unique way through $\prod_{i\in I}Y_{i}$).
\end{remark}
\begin{lemma}\label{l:char-cogen}
A set of objects $\GG$ in $\MS$ is co-generating if and only if there exists $G\in \GG$ that has at least two distinct points of denominator $\nu_{1}$.
\end{lemma}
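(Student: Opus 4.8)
The plan is to prove both implications, with the whole argument hinging on a single observation: $\nu_1$ is the bottom element of the lattice $\NN$. Indeed, $\nu_1(p)=0\leq\mu(p)$ for every prime $p$ and every supernatural number $\mu$, so $\nu_1\leq\mu$ for all $\mu\in\NN$. Consequently, if $f\colon X\to Y$ is an arrow of $\MS$ and $x\in X$ has denominator $\nu_1$, then the denominator-decreasing condition gives $\zeta_Y(f(x))\leq\zeta_X(x)=\nu_1$, forcing $\zeta_Y(f(x))=\nu_1$. In other words, any arrow of multisets must send each point of denominator $\nu_1$ to a point of denominator $\nu_1$. This reduces the question of which points of a multiset $G$ can receive a map from $\2$ to a purely combinatorial count of its $\nu_1$-valued points.

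For the implication from right to left, suppose some $G\in\GG$ carries two distinct points $a\neq b$, both of denominator $\nu_1$. First I would check that the function $m\colon\2\to G$ with $m(0)=a$ and $m(1)=b$ is an arrow of $\MS$: it is continuous because $\2$ is finite and discrete, and it decreases denominators since $\zeta_{\2}(i)=\nu_1=\zeta_G(m(i))$ for $i\in\{0,1\}$. As $a\neq b$, the map $m$ is injective, hence monic by \cref{l:arrows-in-MS}. Now, given any pair of distinct parallel arrows $f_1,f_2\colon X\rightrightarrows X'$, \cref{l:2-cogenerator} supplies an arrow $h\colon X'\to\2$ with $h\circ f_1\neq h\circ f_2$; composing with the monic $m$ yields $g\df m\circ h\colon X'\to G$, and from $g\circ f_1=g\circ f_2$ one would get $h\circ f_1=h\circ f_2$ by monicity of $m$, a contradiction. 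Thus the first condition of \cref{d:co-generator} holds and $\GG$ is co-generating.

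For the converse I would argue by contraposition, using instead the equivalent formulation of co-generation via the canonical arrow into a product (the second condition of \cref{d:co-generator}). Assume that no $G\in\GG$ has two distinct points of denominator $\nu_1$. By the observation of the first paragraph, every arrow $t\colon\2\to G$ with $G\in\GG$ sends both $0$ and $1$ to points of denominator $\nu_1$; since $G$ has at most one such point, each such $t$ is constant, i.e.\ $t(0)=t(1)$. Hence in every coordinate of the canonical arrow
\[
\2\longrightarrow\prod_{G\in\GG,\,t\in\hom(\2,G)}G
\]
the images of $0$ and $1$ coincide, so this arrow is not injective and therefore, by \cref{l:arrows-in-MS}, not monic. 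By \cref{d:co-generator} the set $\GG$ fails to be co-generating, as required.

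The only genuine content, and hence the step I would be most careful about, is the reduction in the first paragraph; everything afterwards is formal once one knows that $\nu_1$ is the least supernatural number and that in $\MS$ monic coincides with injective (\cref{l:arrows-in-MS}). No topological obstacle arises, since $\2$ is discrete and so continuity of any map out of it is automatic: the lemma is ultimately a statement about the existence of two $\nu_1$-valued points, not about the topology of $G$.
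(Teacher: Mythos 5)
Your proof is correct and follows essentially the same route as the paper: the backward direction is identical (a monic $\2\to G$ composed with the separating arrows supplied by \cref{l:2-cogenerator}), and your contrapositive forward direction is the paper's argument in disguise --- where the paper invokes joint monicity of the product projections to find a coordinate separating $h(0)$ and $h(1)$, you observe concretely that if every coordinate map $t\colon\2\to G$ is constant then the canonical arrow is not injective, with both versions resting on the same key fact that denominator-decreasing maps send $\nu_1$-points to $\nu_1$-points because $\nu_1$ is the bottom of $\NN$.
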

\begin{proof}
Suppose $\GG$ is a co-generating set.
Then the canonical arrow $h\colon \2\to \prod_{G\in \GG,\,t \in \hom(\2, G)} G$ is monic so, by \cref{l:arrows-in-MS}, it is injective.  It follows that $h(0)\neq h(1)$, whence there are two distinct maps $f$ and $g$ from the one-element multiset $(\{*\},\zeta_{*})$ where $\zeta_{*}(*)=\nu_{1}$ into the product.
By \cref{r:projections-jointly-monic}, there exist $G\in \GG$ and a projection $\pi$ into $G$ such that $\pi(h(0))\neq\pi(h(1))$.  Therefore, $G$ has two distinct elements; since $\pi\circ h$ decreases denominators, both elements have denominator $\nu_{1}$.

For the converse direction, let us assume that there exists $G\in \GG$ that has at least two distinct points $g_{1}$ and $g_{2}$ with denominator $\nu_{1}$.
We define a map $t \colon \2\to G$ by setting $t(0)\df g_{1}$ and $t(1)\df g_{2}$. The map $t$ is clearly continuous, denominator-decreasing and injective, and hence a monic arrow by \cref{l:arrows-in-MS}. Recall that $\2$ is a co-generator (\cref{l:2-cogenerator}); so, for all distinct arrows $f_{1},f_{2}\colon X\rightrightarrows X'$, there exists an arrow $g\colon X'\to \2$ such that $g\circ f_{1}\neq g\circ f_{2}$.  Since $t$ is monic, the latter inequality holds if and only if  $t\circ g\circ f_{1}\neq t\circ g\circ f_{2}$.  Thus, $G$ is a co-generator, as well. We conclude that $\GG$ is a co-generating set, as it contains a co-generator.
\end{proof}
Next, we will characterise regularly co-generating sets of objects in $\MS$.
Dualising \cref{d:regular-generator}  we say that a set of objects $\GG$ in $\MS$ is \emph{regularly co-generating} if, for every object $X$ in $\MS$, the canonical arrow in \eqref{eq:co-generator} is regular monic.
By \cref{l:arrows-in-MS}, an arrow $f \colon X \to Y$ in $\MS$ is regular monic if and only if it is monic and it preserves denominators.
\Cref{l:char-cogen} gives already a characterisation of the sets $\GG$ for which the map \eqref{eq:co-generator} is monic. So, we now focus on the remaining condition, i.e.\ preservation of denominators. We need some preliminary results.

Given $X$ and $Y$ multisets and arbitrary $x \in X$ and $y \in Y$, a necessary condition for the existence of an arrow $f \colon X \to Y$ such that $f(x) = y$ is that $\zeta_X(x) \geq \zeta_Y(y)$.
The following result establishes a partial converse.
\begin{lemma}\label{l:exists-arrow}
	Let $X$ and $Y$ be multisets and let $x\in X$, $y\in Y$ be such that $\zeta_X(x)\geq\zeta_Y(y)$.
	Suppose that $\zeta_Y(y)$ is finite and that $Y$ has an element of denominator $\nu_1$.
	Then there exists an arrow of multisets $f\colon X\to Y$ such that $f(x)=y$.
\end{lemma}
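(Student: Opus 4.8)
The plan is to construct $f$ by hand: send $x$ to $y$ on a small clopen neighbourhood and collapse all of the rest of $X$ onto a point of denominator $\nu_1$. First I would exploit the hypothesis that $\zeta_Y(y)$ is finite to write $\zeta_Y(y) = \nu_n$ for a suitable $n \in \Nnot$. Then the set $U_n = \{\nu \in \NN \mid \nu \geq \nu_n\}$ is a basic open of $\NN$ by \eqref{e:basis-top-N}, and since $\zeta_X$ is continuous (by definition of a multiset, \cref{d:multiset}) and $\zeta_X(x) \geq \nu_n$, the preimage $\zeta_X^{-1}[U_n]$ is an open subset of $X$ containing $x$. Because $X$ is a Boolean space it has a basis of clopen sets, so I can choose a clopen set $C$ with $x \in C \subseteq \zeta_X^{-1}[U_n]$.

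Next, fix an element $y_0 \in Y$ with $\zeta_Y(y_0) = \nu_1$, which exists by hypothesis, and define $f \colon X \to Y$ by $f(x') \df y$ for $x' \in C$ and $f(x') \df y_0$ for $x' \notin C$. By construction $f$ sends $x$ to $y$, so it only remains to check that $f$ is an arrow of multisets.

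For continuity, note that $f$ is constant on each of the two clopen sets $C$ and $X \setminus C$, hence locally constant and therefore continuous; equivalently, $f$ factors as the characteristic map $X \to \2$ of $C$ followed by the map $\2 \to Y$ determined by $0 \mapsto y_0$ and $1 \mapsto y$, which is continuous since $\2$ is discrete. For the denominator-decreasing condition, if $x' \in C$ then $x' \in \zeta_X^{-1}[U_n]$ gives $\zeta_X(x') \geq \nu_n = \zeta_Y(y) = \zeta_Y(f(x'))$, while if $x' \notin C$ then $\zeta_X(x') \geq \nu_1 = \zeta_Y(y_0) = \zeta_Y(f(x'))$ holds trivially, as $\nu_1$ is the bottom element of $\NN$. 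Thus $f$ decreases denominators and is the desired arrow.

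No single step is a serious obstacle; the only point requiring care is continuity, which is exactly why I separate $x$ from the remainder of $X$ by a \emph{clopen} rather than an arbitrary open neighbourhood. Both hypotheses are used essentially: finiteness of $\zeta_Y(y)$ guarantees that $\{\nu \in \NN \mid \nu \geq \zeta_Y(y)\}$ is open (this fails for infinite supernatural numbers, where such an up-set is only an intersection of basic opens), and the existence of a point of denominator $\nu_1$ supplies a legal target for every point of $X$ lying outside $C$, whatever its denominator.
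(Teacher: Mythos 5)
Your proof is correct and takes essentially the same route as the paper's: both arguments use finiteness of $\zeta_Y(y)$ to see that $\{\nu \in \NN \mid \nu \geq \zeta_Y(y)\}$ is open, pick a clopen $C$ with $x \in C \seq \zeta_X^{-1}[\{\nu \in \NN \mid \nu \geq \zeta_Y(y)\}]$, and define the two-valued map sending $C$ to $y$ and its complement to a point of denominator $\nu_1$, checking continuity and the denominator condition exactly as you do. The only cosmetic difference is that you make the identification $\zeta_Y(y) = \nu_n$ and the basic open $U_n$ explicit, which the paper leaves implicit.
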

\begin{proof}
Since $\zeta_Y(y)$ is finite, the set $\{\nu\in \NN\mid \nu\geq \zeta_Y(y)\}$ is open.  Then the set $\left\{z\in X\mid \zeta_X(z)\geq \zeta_Y(y)\right\} = \zeta_{X}^{-1}\left[\{\nu\in \NN\mid \nu\geq \zeta_Y(y)\}\right]$ is open because $\zeta_{X}$ is continuous, so it can be written as a union of clopen sets; let $C$ be one among those which contains $x$. Notice that $\zeta_{X}(z)\geq \zeta_Y(y)$, for any $z\in C$.
Let $y_1$ be an element of $Y$ of denominator $\nu_1$ given by hypothesis.
Let the function $f \colon	X \to	Y$ be defined by:
\begin{equation*}
					f(z)\df		{
											\begin{cases}
												y	& \text{if } z \in C;\\
												y_1	& \text{ otherwise.}
											\end{cases}
											}
\end{equation*}
The function $f$ is continuous because $C$ is clopen; moreover, a case inspection immediately shows that $f$ decreases denominators. The desired conclusion trivially follows.
\end{proof}
The elements of the form $\nu_{p^k}$ play a special role in $\NN$, the following two lemmas capture their main properties. Recall that, in a complete lattice $L$, an element $x$ is said \emph{completely join-irreducible} if, for every subset $J\seq L$, the condition $x=\bigvee J$ implies $x\in J$.
In this definition $J$ is allowed to be empty, so the least element of $L$ is not completely join-irreducible.
\begin{lemma} \label{l:supremum}
	Every $\nu\in \NN$ is the supremum in $\NN$ of the elements under $\nu$ of the form $\nu_{p^k}$, for $p\in \P$ and $k\in \Nnot$.
\end{lemma}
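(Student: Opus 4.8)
The plan is to reduce the statement to an easy coordinatewise computation, exploiting that $\NN$ is the product lattice $(\N\cup\{\infty\})^{\P}$ and that, consequently, its suprema are computed pointwise at each prime.

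First I would fix $\nu\in\NN$ and make the indexing set explicit. Since $\nu_{p^k}$ assigns the value $k$ to the prime $p$ and $0$ to every other prime, the relation $\nu_{p^k}\leq\nu$ holds precisely when $k\leq\nu(p)$. Thus the elements under $\nu$ of the prescribed form are exactly those in
\[
S_\nu\df\{\nu_{p^k}\mid p\in\P,\ k\in\Nnot,\ k\leq\nu(p)\},
\]
and the goal becomes to show $\bigvee S_\nu=\nu$.

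Next, writing $\mu\df\bigvee S_\nu$, I would evaluate $\mu$ at an arbitrary prime $q$. Because the order on $\NN$ is the pointwise order, the supremum is computed pointwise, so $\mu(q)=\sup\{\,\nu_{p^k}(q)\mid \nu_{p^k}\in S_\nu\,\}$, the supremum being taken in $\N\cup\{\infty\}$. Only the generators with $p=q$ contribute a nonzero value at $q$, whence $\mu(q)=\sup\{k\in\Nnot\mid k\leq\nu(q)\}$. A short case split finishes the argument: if $\nu(q)=0$ the set is empty and its supremum is the bottom element $0=\nu(q)$; if $\nu(q)$ is a positive integer the set is $\{1,\dots,\nu(q)\}$, with supremum $\nu(q)$; and if $\nu(q)=\infty$ the set is all of $\Nnot$, with supremum $\infty=\nu(q)$. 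In every case $\mu(q)=\nu(q)$, and since $q$ was arbitrary we conclude $\mu=\nu$.

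The argument is essentially routine; the only points demanding care are the justification that suprema in $\NN$ are pointwise (which follows from the product structure together with the completeness of $\N\cup\{\infty\}$, already recorded when $\NN$ was introduced) and the correct handling of the degenerate case $\nu(q)=0$, where one must remember that $\sup\emptyset$ is the bottom element. I do not expect any substantial obstacle beyond bookkeeping these boundary cases.
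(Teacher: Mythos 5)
Your proof is correct, and it takes a genuinely different route from the paper's. You compute the join coordinatewise, using that $\NN$ carries the pointwise order on $(\N\cup\{\infty\})^{\P}$, so that $\bigl(\bigvee S_\nu\bigr)(q)=\sup\{k\in\Nnot\mid k\leq\nu(q)\}=\nu(q)$ at every prime $q$, with the empty-supremum case $\nu(q)=0$ handled explicitly. The paper instead argues order-theoretically by contradiction: it notes $\nu$ is an upper bound of $S_\nu$, supposes $\mu$ is an upper bound with $\mu<\nu$, picks $p_0$ with $\mu(p_0)<\nu(p_0)$, and produces an element below $\nu$ but not below $\mu$ (raising the value at $p_0$ by one), contradicting that $\mu$ is an upper bound. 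Your computation buys something the paper's argument as literally written does not: it establishes that $\nu$ is the \emph{least} upper bound against \emph{all} upper bounds, not just those comparable with $\nu$ (the paper's argument implicitly needs to be applied to $\mu=\bigvee S_\nu$, which is comparable with $\nu$ by completeness, to be airtight; moreover its witness $\bar\mu$ is generally not of the form $\nu_{p^k}$, so strictly one should replace it by $\nu_{p_0^{\mu(p_0)+1}}\in S_\nu$). What the paper's approach buys in exchange is independence from the explicit product description of $\NN$: it only uses the order and would survive in any presentation of $\NN$ where one can increment a single coordinate. Both are elementary; yours is the more routine and self-contained verification.
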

\begin{proof}
Let $\nu$ be an arbitrary element of $\NN$ and let $S_{\nu}\df\{\nu_{p^{k}}\mid \nu_{p^{k}}\leq \nu,\, p\in \P, \, k\in \Nnot \}$.  Obviously $\nu$ is an upper-bound for $S_{\nu}$.  Suppose $\mu$ is another upper-bound for $S_{\nu}$ and $\mu<\nu$.  Then for every $p\in \P$, we have $\mu(p)\leq\nu(p)$ and there exists $p_{0}\in \P$ such that $\mu(p_{0})<\nu(p_{0})$.  Thus, the supernatural number $\bar\mu$ which agrees with $\mu$ on every prime different from $p_{0}$ and attains the value $\mu(p_{0})+1$ on $p_{0}$ is strictly greater than $\mu$ but still in $S_{\nu}$, as $\bar\mu\leq \nu$.  This contradicts the fact that $\mu$ is an upper-bound and the proof is concluded.
\end{proof}
\begin{lemma} \label{l:join-irreducible-elements}
	The completely join-irreducible elements of $\NN$ are precisely the elements of the form $\nu_{p^{k}}$, for $p\in \P$ and $k\in \Nnot$.
\end{lemma}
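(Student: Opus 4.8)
The plan is to prove the two inclusions separately: first that every element of the form $\nu_{p^k}$ (with $p\in\P$ and $k\in\Nnot$) is completely join-irreducible, by a direct computation with the pointwise suprema in $\NN$; and second that these are the \emph{only} completely join-irreducible elements, a direction for which \cref{l:supremum} does essentially all the work. Recall that suprema in $\NN$ are computed pointwise, i.e.\ $\left(\bigvee_{\mu\in J}\mu\right)(q)=\sup_{\mu\in J}\mu(q)$ for each prime $q$; this is the only structural fact I would need.

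For the first direction, fix $p$ and $k$ and suppose $\nu_{p^k}=\bigvee J$ for some $J\seq\NN$. For every prime $q\neq p$ we would have $\sup_{\mu\in J}\mu(q)=\nu_{p^k}(q)=0$; since all values lie in $\N\cup\{\infty\}$ and are nonnegative, this forces $\mu(q)=0$ for every $\mu\in J$. At the prime $p$ we have $\sup_{\mu\in J}\mu(p)=k$, which is \emph{finite} because $k\in\Nnot$; hence every $\mu(p)$ is a natural number bounded above by $k$, and a supremum of such numbers equals $k$ only if the value $k$ is attained. Thus some $\mu_0\in J$ satisfies $\mu_0(p)=k$, and as $\mu_0$ vanishes off $p$ this gives $\mu_0=\nu_{p^k}$, so $\nu_{p^k}\in J$. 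This shows $\nu_{p^k}$ is completely join-irreducible.

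For the converse, let $\nu$ be any completely join-irreducible element. By \cref{l:supremum}, $\nu=\bigvee S_\nu$ where $S_\nu\df\{\nu_{p^k}\mid \nu_{p^k}\leq\nu,\ p\in\P,\ k\in\Nnot\}$. Applying complete join-irreducibility to this very representation yields $\nu\in S_\nu$, that is, $\nu=\nu_{p^k}$ for some $p\in\P$ and $k\in\Nnot$. I expect the main (and only mild) obstacle to be in the first direction: one must argue carefully that the pointwise supremum being $k$ at $p$ forces the value $k$ to be \emph{attained} by a member of $J$ — this is exactly where the finiteness of $k$ is used — and that this member is then literally $\nu_{p^k}$. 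The converse is immediate once \cref{l:supremum} is in hand.
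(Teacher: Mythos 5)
Your proof is correct and takes essentially the same route as the paper's: the direction showing that every completely join-irreducible element is of the form $\nu_{p^k}$ is the identical application of \cref{l:supremum}, and your pointwise argument for the join-irreducibility of $\nu_{p^k}$ --- that any $\mu \in J$ must vanish off $p$ and that a supremum of naturals bounded by the \emph{finite} $k$ can equal $k$ only if it is attained --- is a coordinatewise rephrasing of the paper's observation that $J$ is a finite chain of elements $\nu_{p^j}$ with $j \leq k$, whose supremum is therefore its maximum. Both arguments exploit the finiteness of $k$ at exactly the same point, so the difference is purely cosmetic.
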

\begin{proof}
Let $\nu$ be a completely join-irreducible element of $\NN$. By \Cref{l:supremum}, $\nu\in \NN$ is the supremum of the set $\{\nu_{p^{k}}\mid \nu_{p^{k}}\leq \nu,\, p\in \P, \, k\in \Nnot \}$.  Since $\nu$ is completely join-irreducible, $\nu$  belongs to this set and hence $\nu=\nu_{p^{k}}$ for some  $p\in \P$ and $k\in \Nnot$.

For the converse implication, fix $p\in \P$ and $k\in \Nnot$. If $\nu_{p^{k}}=\bigvee J$, then $J$ is linearly ordered and contains only elements of the form $\nu_{p^{j}}$ with $j\leq k$, whence $J$ is finite. Therefore, $\nu_{p^{k}}=\bigvee J$ if and only if $\nu_{p^{k}}\in J$.  As a consequence, every element of the form $\nu_{p^k}$ is completely join-irreducible.
\end{proof}
\begin{notation}
	For any $n\in \Nnot$, let $\Dn$ denote the multiset whose underlying Boolean space is a two-point discrete space $\{0,1\}$, with $\zeta(0)\df\nu_{1}$ and $\zeta(1)\df\nu_n$. Notice that, with our previous notation, $\2$ coincides with $\D{1}$. 
\end{notation}
\begin{lemma}\label{l:char-reg-cogen}
Let $\GG$ be a set of multisets. The following conditions are equivalent.
\begin{enumerate}[(i)]
\item \label{i:preserves} For every multiset $X$, the canonical arrow from $X$ to $\prod_{G\in \GG,\,t \in \hom(X,G)} G$ preserves denominators. 
\item \label{i:exists} For every $p \in \P$ and $k \in \Nnot$, there exists $G\in \GG$ that has at least one point of denominator $\nu_{p^k}$ and one point of denominator $\nu_1$.
\end{enumerate}
\end{lemma}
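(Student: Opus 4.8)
The plan is to first translate condition \cref{i:preserves} into a statement purely about suprema in $\NN$, and then to match it against the combinatorial condition \cref{i:exists} using the two structural facts about the elements $\nu_{p^k}$ established above. First I would unwind the denominator of the product. Writing $h$ for the canonical arrow and $\pi_{G,t}$ for the projection onto the factor indexed by $(G,t)$, we have $\pi_{G,t}\circ h = t$. Since the product is a limit, \cref{l:limits} computes the denominator of any point $z$ of the product as $\bigvee_{G,t}\zeta_G(\pi_{G,t}(z))$; applied to $z=h(x)$ this gives $\zeta_{\prod}(h(x)) = \bigvee_{G,t}\zeta_G(t(x))$. By definition $h$ preserves denominators exactly when $\zeta_X(x)=\zeta_{\prod}(h(x))$ for all $x$. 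As every arrow of multisets decreases denominators, $\zeta_G(t(x))\le\zeta_X(x)$ always holds, so the inequality $\bigvee_{G,t}\zeta_G(t(x))\le\zeta_X(x)$ is automatic. Hence \cref{i:preserves} is equivalent to the reverse inequality
\[
\zeta_X(x)\le\bigvee_{G\in\GG,\,t\in\hom(X,G)}\zeta_G(t(x))
\]
holding for every multiset $X$ and every $x\in X$.

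For \cref{i:exists}$\Rightarrow$\cref{i:preserves} I would exploit \cref{l:supremum}, which writes $\zeta_X(x)$ as the join of the elements $\nu_{p^k}\le\zeta_X(x)$. It therefore suffices to realise each such $\nu_{p^k}$ as $\zeta_G(t(x))$ for some admissible pair $(G,t)$. Given $\nu_{p^k}\le\zeta_X(x)$, condition \cref{i:exists} supplies a $G\in\GG$ with a point $y$ of denominator $\nu_{p^k}$ and a point of denominator $\nu_1$. Since $\nu_{p^k}$ is finite and $\zeta_X(x)\ge\nu_{p^k}=\zeta_G(y)$, \cref{l:exists-arrow} produces an arrow $t\colon X\to G$ with $t(x)=y$, so $\zeta_G(t(x))=\nu_{p^k}$. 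Taking the join over all relevant $p,k$ yields $\bigvee_{G,t}\zeta_G(t(x))\ge\bigvee_{\nu_{p^k}\le\zeta_X(x)}\nu_{p^k}=\zeta_X(x)$, which is the desired inequality.

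For \cref{i:preserves}$\Rightarrow$\cref{i:exists} I would argue by contraposition, and here choosing the right test object is the key idea. Suppose \cref{i:exists} fails for some $p\in\P$ and $k\in\Nnot$, i.e.\ no $G\in\GG$ simultaneously carries a point of denominator $\nu_{p^k}$ and a point of denominator $\nu_1$. Apply the reformulated condition to $X=\D{p^k}$ at its point $1$, whose denominator is $\nu_{p^k}$: preservation would force $\nu_{p^k}=\bigvee_{G,t}\zeta_G(t(1))$ with each $\zeta_G(t(1))\le\nu_{p^k}$. Since $\nu_{p^k}$ is completely join-irreducible by \cref{l:join-irreducible-elements}, some single pair $(G,t)$ must satisfy $\zeta_G(t(1))=\nu_{p^k}$. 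Then $t(1)$ is a point of $G$ of denominator $\nu_{p^k}$, while $t(0)$ has denominator $\le\nu_1$, hence equal to $\nu_1$ (the bottom of $\NN$); thus $G$ carries both kinds of point, contradicting our assumption. Therefore \cref{i:preserves} fails, completing the contrapositive.

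The main obstacle is the bookkeeping that bridges the abstract preservation-of-denominators statement and the concrete combinatorics of \cref{i:exists}. Concretely, the forward direction rests entirely on producing realising arrows via \cref{l:exists-arrow}, which is precisely why the hypotheses that $G$ has a point of denominator $\nu_1$ and that $\nu_{p^k}$ is finite are both indispensable; the backward direction rests on recognising $\D{p^k}$ as the correct probe and on the complete join-irreducibility of $\nu_{p^k}$. Once these two lemmas about the $\nu_{p^k}$ are in place, both implications reduce to short order-theoretic computations.
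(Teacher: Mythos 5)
Your proof is correct and follows essentially the same route as the paper's: both directions rest on computing $\zeta_{\prod}(h(x))=\bigvee_{G,t}\zeta_G(t(x))$ via \cref{l:limits}, probing with $\D{p^k}$ and invoking the complete join-irreducibility of $\nu_{p^k}$ (\cref{l:join-irreducible-elements}) for one implication, and combining \cref{l:supremum} with the realising arrows of \cref{l:exists-arrow} for the other. The only cosmetic difference is that you phrase (i) $\Rightarrow$ (ii) contrapositively where the paper argues directly.
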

\begin{proof}
Throughout the proof, let us denote by $\zeta_{\prod}$ the denominator map of $\prod_{G\in \GG,\,t \in \hom(\D{p^k}, G)} G$.

Suppose \cref{i:preserves} holds.
Let $p \in \P$ and $k \in \Nnot$.
Consider the canonical arrow
\[h\colon \D{p^k}\to \prod_{G\in \GG,\,t \in \hom(\D{p^k}, G)} G.\]
We have
\[\nu_{p^k} = \zeta_{\D{p^k}}(1) \stackrel{\text{\cref{i:preserves}}}{=} \zeta_{\prod}(h(1))\stackrel{\text{\cref{l:limits}}}{=} \bigvee_{G\in \GG,\,t \in \hom(\D{p^k},G)} \zeta_G(t(1)).\]
By \cref{l:join-irreducible-elements}, the element $\nu_{p^k}$ is completely join-irreducible; therefore, there exist $G \in \GG$ and $t\in \hom(\D{p^k},G)$ such that $\zeta_G(t(1))=\nu_{p^k}$. Furthermore, $\zeta_G(t(0))=\nu_1$.
Since $p$ and $k$ are arbitrary, we conclude that \ref{i:preserves} $\Rightarrow$ \ref{i:exists}.

Let us prove the converse implication: suppose \cref{i:exists} holds. Let $X$ be a multiset and $h$ be the canonical arrow from $X$ to $\prod_{G\in \GG,\,t \in \hom(X,G)}G$.
Then
\begin{align*}
\zeta_{\prod}(h(x))	& = \bigvee_{G\in \GG,\, t\in \hom(X,G)} \zeta_G(t(x)) \\
& = \bigvee_{G\in \GG,\, t\in \hom(X,G)} \bigvee_{\nu_{p^k} \leq \zeta_G(t(x))}\nu_{p^k} && (\text{\cref{l:supremum}}) \\
& = \bigvee \{\nu_{p^k} \mid \exists G \in \GG \ \exists t \in \hom(X,G) \text{ s.t.\ } \nu_{p^k} \leq \zeta_G(t(x))\}  \\
& = \bigvee \{\nu_{p^k} \mid \nu_{p^k} \leq \zeta_G(x)\} && (\text{\cref{l:exists-arrow}}) \\
& = \zeta_G(x).
\end{align*}
Thus, the map $h$ preserves denominators.
\end{proof}
Combining \cref{l:char-cogen,l:char-reg-cogen} we obtain a characterisation of regularly co-generating sets of objects in $\MS$.
\begin{theorem}\label{t:how-reg-cogen}
A set of objects $\GG$ in $\MS$ is regularly co-generating if and only if, for every $n \in \{1\} \cup \{p^k \mid p \in \P, k \in \Nnot\}$, there exists $G \in \GG$ with two distinct elements, one having denominator $\nu_{n}$ and the other one $\nu_1$.
\end{theorem}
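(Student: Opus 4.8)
The plan is to observe that the statement is simply the conjunction of the two characterisations already established in \cref{l:char-cogen,l:char-reg-cogen}, repackaged as a single uniform condition indexed by $n \in \{1\} \cup \{p^k \mid p \in \P, k \in \Nnot\}$. The starting point is the decomposition recorded just before the theorem: by \cref{l:arrows-in-MS}, an arrow in $\MS$ is regular monic exactly when it is monic \emph{and} preserves denominators. Hence, for a set $\GG$, the canonical arrow $X \to \prod_{G \in \GG,\, t \in \hom(X,G)} G$ of \eqref{eq:co-generator} is regular monic for every multiset $X$ if and only if it is monic for every $X$ and, separately, preserves denominators for every $X$. Thus $\GG$ is regularly co-generating precisely when both of these conditions hold.

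First I would dispatch the monic half. By definition the canonical arrow is monic for every $X$ exactly when $\GG$ is co-generating, and \cref{l:char-cogen} characterises this as the existence of some $G \in \GG$ carrying two distinct points, both of denominator $\nu_1$. This is exactly the instance $n = 1$ of the theorem's condition, where the two required elements of denominator $\nu_n$ and $\nu_1$ coincide in their denominator, so that distinctness becomes the genuine content. Next I would treat the denominator-preservation half: by \cref{l:char-reg-cogen}, the canonical arrow preserves denominators for every $X$ if and only if, for each $p \in \P$ and $k \in \Nnot$, some $G \in \GG$ has a point of denominator $\nu_{p^k}$ together with a point of denominator $\nu_1$. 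These are exactly the instances $n = p^k$.

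It remains to see that the two families of conditions assemble into the single clause of the statement. For $n = p^k$ the two witnessing points are automatically distinct, since $\nu_{p^k} \neq \nu_1$, so the phrasing ``two distinct elements, one of denominator $\nu_n$ and the other $\nu_1$'' faithfully records the second condition of \cref{l:char-reg-cogen}; for $n = 1$ the same phrasing forces two genuinely distinct points of denominator $\nu_1$, matching \cref{l:char-cogen}. Ranging over all $n \in \{1\} \cup \{p^k \mid p \in \P, k \in \Nnot\}$ therefore reproduces precisely the conjunction of the two halves, and the claimed equivalence follows. The only point demanding care---and the closest thing to an obstacle---is this bookkeeping of the distinctness clause, making sure the $n = 1$ case is neither silently subsumed by nor accidentally weakened to the $n = p^k$ cases.
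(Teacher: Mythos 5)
Your proposal is correct and takes essentially the same route as the paper's proof: both decompose regular monicity of the canonical arrow into injectivity plus denominator preservation via \cref{l:arrows-in-MS}, and then invoke \cref{l:char-cogen} for the monic half (the $n=1$ instance) and \cref{l:char-reg-cogen} for the denominator-preserving half (the $n=p^k$ instances). Your explicit bookkeeping of the distinctness clause---automatic for $n=p^k$ since $\nu_{p^k}\neq\nu_1$, substantive only for $n=1$---is left implicit in the paper but is accurate.
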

\begin{proof}
A set of objects $\GG$ in $\MS$ is regularly co-generating if and only if the canonical arrow $h$ from $X$ to $\prod_{G\in \GG,\,t \in \hom(X,G)} G$ is regular monic.
By \cref{l:arrows-in-MS}, an arrow in $\MS$ is regular monic if and only if it is injective and denominator-preserving.
By \cref{l:char-cogen}, the map $h$ is injective if and only if there exists $G \in \GG$ that has at least two distinct points of denominator $\nu_1$.
By \cref{l:char-reg-cogen}, the map $h$ preserves denominators if and only if, for every $p \in \P$ and $k \in \Nnot$, there exists $G \in \GG$ that has at least one point of denominator $\nu_{p^k}$ and one point of denominator $\nu_1$.
\end{proof}
\begin{corollary}\label{c:reg-cogen-inf}
Every regularly co-generating set in $\MS$ has either infinitely many objects or an object whose underlying set is infinite.
\end{corollary}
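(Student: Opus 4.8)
The plan is to derive this directly from the characterisation in \cref{t:how-reg-cogen} by a simple counting argument, so essentially no new categorical input is needed. First I would fix the index set $N \df \{1\} \cup \{p^k \mid p \in \P,\ k \in \Nnot\}$, which is countably infinite, and observe that the assignment $n \mapsto \nu_n$ is injective on $N$: distinct prime powers have distinct prime factorisations and hence yield distinct supernatural numbers. Thus $\{\nu_n \mid n \in N\}$ is an infinite family of pairwise distinct elements of $\NN$, and this infinitude is what drives the whole argument.

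I would then argue by contraposition. Suppose $\GG$ is a set of multisets with only finitely many objects, each of which has a finite underlying set. Then the disjoint union $\coprod_{G \in \GG} G$ of the underlying sets is a finite set, so the denominator maps $\zeta_G$ (for $G \in \GG$) take only finitely many values in total. In particular, only finitely many of the pairwise distinct supernatural numbers $\nu_n$, $n \in N$, can occur as the denominator of some point of some object in $\GG$.

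On the other hand, were such a $\GG$ regularly co-generating, \cref{t:how-reg-cogen} would force, for every $n \in N$, the existence of some $G \in \GG$ containing a point of denominator $\nu_n$. Since the $\nu_n$ are pairwise distinct, this would require infinitely many distinct denominators to appear among the finitely many points of the objects in $\GG$, contradicting the previous paragraph. Hence no finite set of finite multisets is regularly co-generating, which is precisely the contrapositive of the claim: every regularly co-generating set in $\MS$ must have either infinitely many objects or an object with an infinite underlying set.

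The whole thing reduces to a pigeonhole count, so I expect no genuine obstacle once \cref{t:how-reg-cogen} is available. The only two points deserving a line of care are the injectivity of $n \mapsto \nu_n$ on $N$ (guaranteeing genuinely infinitely many required denominators) and the elementary observation that a finite family of finite multisets supports only finitely many distinct denominators.
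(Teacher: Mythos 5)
Your proof is correct and takes essentially the same route as the paper, which states this corollary without a separate proof precisely because it follows from \cref{t:how-reg-cogen} by the pigeonhole count you give: the pairwise distinct denominators $\nu_{p^k}$ form an infinite family that must all occur among the points of objects in $\GG$, which a finite set of multisets with finite underlying sets cannot accommodate. Your two flagged points of care (injectivity of $n \mapsto \nu_n$ and the finiteness of the set of occurring denominators) are exactly the right ones and are handled correctly.
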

If we replace the notion of a co-generating set with the notion of a co-generator, we obtain the following result.
\begin{corollary}\label{c:reg-cogen}
An object $G$ is a regular co-generator in $\MS$ if and only if, for every $n \in \{1\} \cup \{p^k \mid p \in \P, k \in \Nnot\}$, there exists $g\in G$ that has denominator $\nu_{n}$. In particular, the underlying set of a regular co-generator in $\MS$ is infinite.
\end{corollary}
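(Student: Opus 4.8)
The plan is to derive this statement directly from \cref{t:how-reg-cogen} by specialising the notion of a regularly co-generating \emph{set} to a single object. First I would recall that, dualising \cref{d:regular-generator}, an object $G$ is a regular co-generator precisely when the singleton $\{G\}$ is a regularly co-generating set: indeed, $\hom(-,G)$ reflects regular monic arrows if and only if the canonical arrow $X \to \prod_{t \in \hom(X,G)} G$ is regular monic for every $X$, which is exactly the case $\GG = \{G\}$ of the defining property of a regularly co-generating set.

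Next I would invoke \cref{t:how-reg-cogen} with $\GG = \{G\}$. Since $G$ is the only member of the family, the existential quantifier ``there exists $G' \in \GG$'' collapses, and the criterion becomes: for every $n \in \{1\} \cup \{p^k \mid p \in \P,\, k \in \Nnot\}$, the multiset $G$ possesses two distinct elements, one of denominator $\nu_n$ and the other of denominator $\nu_1$. I would then unwind this into the statement of the corollary. For $n = p^k$ with $p^k > 1$, the two required elements have different denominators $\nu_{p^k} \neq \nu_1$ and are hence automatically distinct; the condition therefore amounts to the mere existence of an element of denominator $\nu_{p^k}$ together with one of denominator $\nu_1$. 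For $n = 1$ the condition asks for two distinct elements of denominator $\nu_1$, which secures the injectivity (monic) half of regular co-generation. Collecting these cases, the criterion is equivalent to the displayed condition that for each such $n$ there is a point of $G$ of denominator $\nu_n$.

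Finally, for the ``in particular'' clause I would note that $\{1\} \cup \{p^k\}$ is infinite, since there are infinitely many primes, and that, by \cref{l:join-irreducible-elements}, the values $\nu_{p^k}$ are pairwise distinct (completely join-irreducible) elements of $\NN$. Because distinct denominators force distinct points, any multiset satisfying the criterion must contain an element realising each of infinitely many values $\nu_{p^k}$, and these elements are pairwise distinct; hence the underlying set of every regular co-generator is infinite.

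The step I expect to require the most care is the faithful translation of the ``two distinct elements'' clause of \cref{t:how-reg-cogen} for the degenerate index $n = 1$: here distinctness is \emph{not} automatic from the denominators, which both equal $\nu_1$, so one must keep track of the fact that regular co-generation genuinely needs two points of denominator $\nu_1$ (these are what separate the two points of $\2$), and not merely a single one. Everything else is a direct specialisation of the already-established theorem, so no new combinatorial input about $\NN$ is needed beyond \cref{l:join-irreducible-elements}.
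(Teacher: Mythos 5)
Your route is the same as the paper's: \cref{c:reg-cogen} carries no separate proof there and is obtained exactly as you do it, by specialising \cref{t:how-reg-cogen} to the singleton set $\GG=\{G\}$ (using completeness of $\MS$, \cref{c:complete-and-co}, so that the power $\prod_{t\in\hom(X,G)}G$ exists and the hom-functor formulation of \cref{d:regular-generator} dualises as you state). Your ``in particular'' argument is also fine, although citing \cref{l:join-irreducible-elements} is unnecessary: the pairwise distinctness of the $\nu_{p^k}$ is immediate from unique factorisation, and distinct denominators force distinct points since $\zeta_G$ is a function.

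One sentence in your middle paragraph, however, overstates: after correctly unwinding the $n=1$ case to ``two distinct elements of denominator $\nu_1$,'' you assert that the resulting criterion ``is equivalent to the displayed condition that for each such $n$ there is a point of $G$ of denominator $\nu_n$.'' Taken literally, this equivalence is false, and the caveat in your final paragraph is not merely a point of care but a genuine correction. The multiset $\bar{\NN}$ discussed in the paper's appendix is the witness: it contains exactly one point of denominator $\nu_1$ and a point of denominator $\nu_{p^k}$ for every $p\in\P$, $k\in\Nnot$, so it satisfies the displayed one-point-per-$n$ condition, yet it is not a regular co-generator, precisely because the injectivity half of regular monicity (\cref{l:char-cogen}, via \cref{l:arrows-in-MS}) requires two distinct points of denominator $\nu_1$. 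So the correct criterion is: $G$ has two distinct points of denominator $\nu_1$, and for every $p\in\P$ and $k\in\Nnot$ a point of denominator $\nu_{p^k}$. This also means the corollary as printed is itself slightly imprecise in its ``if'' direction (its literal condition is satisfied by $\bar{\NN}$), a reading confirmed by the appendix's remark that $\bar{\NN}$ is only ``almost a regular co-generator'' and that $\bar{\NN}\times\2$ repairs the defect. Your proof is sound once the ``collecting'' sentence is amended to retain the two-point clause at $n=1$; the infinitude conclusion is unaffected.
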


As a consequence of \cref{t:how-reg-cogen,c:reg-cogen-inf}, we have the following.
\begin{proposition} \label{p:no-finite-set}
	The category $\MVlf$ of locally finite MV-algebras has no regularly generating finite set of finitely generated objects.
\end{proposition}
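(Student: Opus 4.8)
The plan is to transport the statement to the dual category $\MS$ via the duality of \cref{t:duality} and then read off a contradiction from the two structural facts already established in this section. Since $\MVlf \simeq \MS^{\op}$, and since each of the notions involved is self-dual up to the prefix ``co-'', a regularly \emph{generating} finite set of finitely \emph{generated} objects in $\MVlf$ corresponds precisely to a regularly \emph{co-generating} finite set of finitely \emph{co-generated} objects in $\MS$. Thus it suffices to prove that $\MS$ admits no regularly co-generating finite set of finitely co-generated objects. I would argue by contradiction: suppose such a set $\GG$ existed.

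First I would apply \cref{p:how-finitely-cogenerated}, whose equivalence (\labelcref{i:finitely-co-generated} $\Rightarrow$ \labelcref{i:finite}) tells us that every finitely co-generated object of $\MS$ has a finite underlying set (indeed finite denominators too). Hence the hypothetical $\GG$ would be a \emph{finite} set whose members all have \emph{finite} underlying sets. I would then invoke \cref{c:reg-cogen-inf}, which asserts that any regularly co-generating set in $\MS$ must either contain infinitely many objects or contain an object with infinite underlying set. A finite set of objects each having a finite underlying set satisfies neither disjunct, so this is the desired contradiction. Consequently no regularly co-generating finite set of finitely co-generated objects exists in $\MS$, and dually $\MVlf$ has no regularly generating finite set of finitely generated objects.

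The only point requiring care is the dualisation bookkeeping: one must confirm that ``finitely generated'' in $\MVlf$ matches exactly the ``finitely co-generated'' notion used in \cref{p:how-finitely-cogenerated} (both phrased via (co-)filtered (co-)limits with monic/epic transition arrows, as in \cref{d:finitely-presentable}), and likewise that regular generation dualises to the regular co-generation of \cref{c:reg-cogen-inf}. Once this translation is pinned down, no further work is needed—the proposition is an immediate consequence of \cref{p:how-finitely-cogenerated,c:reg-cogen-inf}, with all the genuine effort having gone into those earlier results.
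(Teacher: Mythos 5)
Your proposal is correct and follows essentially the same route as the paper's own proof: both dualise to $\MS$ and combine \cref{p:how-finitely-cogenerated} (finitely co-generated objects have finite underlying sets) with \cref{c:reg-cogen-inf} (a regularly co-generating set has infinitely many objects or an infinite member), the only cosmetic difference being that you phrase it as a contradiction while the paper states it directly. Your remark on the dualisation bookkeeping is apt but unproblematic, since the paper's definitions of these notions are set up precisely so that they dualise as you describe.
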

\begin{proof}
	We prove the dual statement.
	By \cref{p:how-finitely-cogenerated}, every finitely co-generated object in $\MS$ has finite underlying set.
	By \cref{c:reg-cogen-inf}, no finite set of multisets with finite underlying set is a regularly co-generating set in $\MS$.
	Thus, $\MS$ has no regularly co-generating finite set of finitely co-generated objects.
\end{proof}
\begin{theorem}\label{t:locally-finite-are-not-finitely-sorted-quasi-variety}
	The category of locally finite MV-algebras is not equivalent to any finitely-sorted quasi-variety of finitary algebras.
\end{theorem}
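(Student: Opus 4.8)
The plan is to combine the obstruction already established in \cref{p:no-finite-set} with the categorical characterisation of finitely-sorted finitary quasi-varieties from \cref{t:char-quasi-varieties}. I argue by contradiction: suppose $\MVlf$ is equivalent to a quasi-variety of finitary algebras in a language with finitely many sorts. By item~(\ref{t:quasi-variety-many-sorted}) of \cref{t:char-quasi-varieties}, together with \cref{r:card-sorts-arity} (which identifies the number of sorts with the number of objects in the generating set), $\MVlf$ must then admit an abstractly finite, regularly generating set $\GG$ of regular projective objects with $\GG$ \emph{finite}. The goal is to extract from this a regularly generating finite set of \emph{finitely generated} objects, so as to contradict \cref{p:no-finite-set}.

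The bridge between the two notions is \cref{l:abstractly-finite-vs-finitely-generated}, which upgrades an abstractly finite, regular projective \emph{regular generator} to a finitely generated object. Since a member of $\GG$ need not individually be a regular generator, I would first collapse $\GG$ into a single object by forming the coproduct $P \df \coprod_{G \in \GG} G$, which exists because $\MVlf$ is co-complete (this follows from \cref{t:duality} and \cref{c:complete-and-co}, as $\MS$ is complete and co-complete). I would then check that $P$ inherits the three relevant properties: it is regular projective because coproducts of regular projectives are regular projective; it is abstractly finite because $\GG$ is a finite, abstractly finite set, so an arrow out of $P$ restricts to arrows out of each summand, each of which factors through a finite sub-coproduct, and finitely many of these assemble into a finite sub-copower of $P$; and it is a regular generator because $\GG$ is regularly generating and finite, so the canonical comparison map routes the $\GG$-indexed regular epimorphism through the $P$-indexed one. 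Applying \cref{l:abstractly-finite-vs-finitely-generated} then yields that $P$ is finitely generated, whence $\{P\}$ is a regularly generating finite set of finitely generated objects.

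This immediately contradicts \cref{p:no-finite-set}, completing the proof. The main obstacle I anticipate is the last of the three points above, namely that the coproduct of a regularly generating set is again a regular generator: here one must be careful, since the comparison map is only available once every hom-set $\hom(G, A)$ with $G \in \GG$ can be used to place each index $(G,h)$ into a summand of the $P$-coproduct; this works in our situation because the objects of $\GG$ correspond under the equivalence to free algebras on a single generator, which map to every object, so no hom-set obstruction arises. An alternative that sidesteps the coproduct entirely is to work on the quasi-variety side directly: the free algebras on one generator, one per sort, form a finite regularly generating set of finitely generated regular projective objects, and transporting this set across the equivalence---all three properties being categorical, hence preserved---again produces exactly the configuration forbidden by \cref{p:no-finite-set}.
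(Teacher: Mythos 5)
Your overall architecture---deriving a contradiction with \cref{p:no-finite-set} from item (\ref{t:quasi-variety-many-sorted}) of \cref{t:char-quasi-varieties} and the sort-counting observation of \cref{r:card-sorts-arity}---is exactly the paper's, which however never forms a coproduct: it applies (a set-indexed version of) \cref{l:abstractly-finite-vs-finitely-generated} directly to the finite set $\GG$ to conclude its members are finitely generated. Your main route through $P = \coprod_{G \in \GG} G$ is sound for regular projectivity and abstract finiteness, but the step you yourself flagged---that $P$ is a regular generator---is justified incorrectly. First, the characterisation theorem hands you an \emph{abstract} set $\GG$; nothing in your setup lets you assume its members are free algebras on one generator without further argument. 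Second, even granting that description, the principle ``free algebras on a single generator map to every object'' is false in many-sorted algebra, where carriers may be empty at some sorts: $\hom(F_\sigma(1), A) \cong A_\sigma$, and e.g.\ in $\Set \times \Set$ there is no arrow from the free object on one generator of the first sort to $(\emptyset, B)$. The danger is concrete in this very category: the trivial one-element MV-algebra is dual to the empty multiset, admits no arrow to any nontrivial algebra, and absorbs coproducts (dually, any product with the empty multiset is empty), so a generating set containing it would collapse $P$ to the trivial algebra.

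What repairs the step is the hypothesis you did not exploit: regular projectivity of the members of $\GG$. Dually in $\MS$, the empty multiset is a regular subobject of every multiset $X$ (the inclusion is injective and vacuously denominator-preserving, \cref{l:arrows-in-MS}), so for $Y$ regular injective one extends the unique arrow $\emptyset \to Y$ along $\emptyset \rmono X$ to obtain $\hom(X,Y) \neq \emptyset$ for \emph{every} $X$. Equivalently, in $\MVlf$ every algebra admits a regular epimorphism onto the trivial algebra, and regular projectivity of $G \in \GG$ lifts the arrow $G \to \text{trivial}$ along it, giving $\hom(G,A) \neq \emptyset$ for all $A$ (in particular, the trivial algebra is not regular projective and cannot occur in $\GG$). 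With nonemptiness secured, each $h \colon G \to A$ extends to some $\hat{h} \colon P \to A$, your comparison map factors the $\GG$-indexed regular epimorphism through the $P$-indexed one, and extremal-epi cancellation together with the coincidence of extremal and regular epis in $\MVlf$ (dual to \cref{l:arrows-in-MS}) makes $P$ a regular generator, so your contradiction goes through. Your fallback in the closing sentences---transporting the free algebras on one generator per sort, which form a finite regularly generating set of finitely generated regular projectives, across the equivalence---avoids the issue entirely and is in substance the paper's own proof.
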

\begin{proof}
	From \cref{p:no-finite-set} we know that the category $\MVlf$ has no regularly generating finite set of finitely generated objects.
	Now we use the characterisation of many-sorted quasi-varieties of finitary algebras (\cref{t:quasi-variety-many-sorted} in \cref{t:char-quasi-varieties}), together with \cref{l:abstractly-finite-vs-finitely-generated}.
	By  \cref{r:card-sorts-arity}, the number of objects in the regularly generating set corresponds to the number of sorts.
	It follows that $\MVlf$ is not equivalent to a finitely-sorted quasi-variety of finitary algebras.
\end{proof}
\begin{corollary}\label{c:locally-finite-are-not-finitary-variety}
	The category of locally finite MV-algebras is not equivalent to any variety of finitary algebras.
\end{corollary}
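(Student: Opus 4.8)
The plan is to deduce the corollary directly from \cref{t:locally-finite-are-not-finitely-sorted-quasi-variety}, which has just been established. The only thing to verify is that a variety of finitary algebras is a particular case of a finitely-sorted quasi-variety of finitary algebras, so that the negative result for the larger class transfers to the smaller one. To this end I would record two definitional observations, using the notion of (quasi-)variety fixed in \cref{ss:algebraic-categories}. First, every variety is a quasi-variety: an equation $u_0 = v_0$ is exactly the quasi-equation $\left(\bigwedge_{i\in I}(u_i=v_i)\right)\Longrightarrow (u_0=v_0)$ whose premise is empty (that is, $I = \emptyset$), so any class presented by equations is in particular presented by quasi-equations, while the free-algebra requirement is the same in both cases. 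Second, a single-sorted signature has exactly one sort, hence finitely many; thus a variety of finitary algebras is, a fortiori, a finitely-sorted quasi-variety of finitary algebras.

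Having recorded these inclusions, the argument is immediate: if $\MVlf$ were equivalent to some variety of finitary algebras, then by the two observations above it would be equivalent to a finitely-sorted quasi-variety of finitary algebras, contradicting \cref{t:locally-finite-are-not-finitely-sorted-quasi-variety}; hence no such equivalence exists. There is no genuine obstacle in this step. All of the substantive work---namely that finitely co-generated multisets have a finite underlying set (\cref{p:how-finitely-cogenerated}) and that regularly co-generating sets must be ``large'' (\cref{c:reg-cogen-inf})---has already been carried out in the proof of the theorem, and the corollary is merely the observation that its conclusion applies verbatim to the narrower class of finitary varieties.
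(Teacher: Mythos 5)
Your proposal is correct and matches the paper's own (implicit) reasoning exactly: the corollary is stated without a separate proof precisely because a variety of finitary algebras is a single-sorted, hence finitely-sorted, quasi-variety of finitary algebras, so \Cref{t:locally-finite-are-not-finitely-sorted-quasi-variety} applies verbatim. Your two definitional observations (equations as quasi-equations with empty premise $I = \emptyset$, and one sort being finitely many) make explicit exactly what the paper leaves tacit.
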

%

\section{The case of an infinite language}
\label{s:infinite-language}


In this section, we will prove that $\MVlf$ is equivalent to a quasi-variety of algebras if one allows either a language with operations of at most countable arity or a language with countably many sorts.  The key point is to prove that the objects $\Dn$ defined in the previous section are all regular injective.  

Dualising the notion of regular projective, we obtain the notion of regular injective.
Explicitly, an object $X$ is called \emph{regular injective} if, for every regular monic arrow $g\colon B \rmono A$ and every arrow $f\colon B\to X$, there exists an arrow $h\colon A \to X$ such that the following diagram commutes:
\[
	\begin{tikzpicture}
	
		\node (B) at (0,0)	{$B$};
		\node (A) at (0,1.5)	{$A$};
		\node (P) at (1.5,1.5)		{$X$};

		\draw [right hook->]	(B) -- (A) node [left, midway] {$g$};
		\draw [->]				(B) -- (P) node [right, midway] {$f$};
		\draw [->, dashed]	(A) -- (P) node [above, midway] {$h$};
	\end{tikzpicture}
\]

It is known that in a Boolean space $X$ if $A\seq B\seq X$ with $A$ closed and $B$ open, then there exists a clopen subset $C$ of $X$ such that $A \seq C \seq B$. The following lemma extends this result to a family of pairs of open and closed sets, in such a way that the $C_{i}$ form a partition of $X$.
\begin{lemma}\label{l:partition}
	Let $X$ be a Boolean space, let $K_1, \dots, K_n$ be pairwise disjoint closed subsets of $X$ and let $Z_1, \dots, Z_n$ be an open cover of $X$ such that $K_i \seq Z_i$ for $i \in \{1, \dots, n\}$.
	Then $X$ can be partitioned into clopen subsets $C_1, \dots, C_n$ such that $K_i \seq C_i \seq Z_i$, for $i \in \{1, \dots, n\}$.
\end{lemma}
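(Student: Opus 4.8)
The plan is to argue by induction on $n$, peeling off one clopen block at a time and invoking at each step the separation property recalled just above the statement (between a closed set and an open superset one can always interpolate a clopen set). The base case $n=1$ is immediate: a single set $Z_1$ that covers $X$ must equal $X$, so taking $C_1\df X$ gives the trivial one-block partition with $K_1\seq C_1\seq Z_1$.

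For the inductive step I assume the statement for $n-1$ and carve off a suitable clopen block $C_n$. The point is to make $C_n$ contain $K_n$, sit inside $Z_n$, avoid the other $K_i$, and be large enough that the remaining $Z_i$ still cover what is left. Concretely I would set
\[
A \df K_n \cup \Bigl(X \setminus \bigcup_{i<n} Z_i\Bigr), \qquad B \df Z_n \cap \Bigl(X \setminus \bigcup_{i<n} K_i\Bigr).
\]
Here $A$ is closed (a union of two closed sets, the second being the complement of a finite union of open sets) and $B$ is open (an intersection of two open sets). The crux is to check $A\seq B$: if $x\in K_n$ then $x\in Z_n$ by hypothesis and $x\notin K_i$ for $i<n$ by pairwise disjointness; if instead $x\in X\setminus\bigcup_{i<n}Z_i$, then since the $Z_i$ cover $X$ we must have $x\in Z_n$, and $x\notin K_i$ for $i<n$ because $K_i\seq Z_i$. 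In either case $x\in B$. Applying the separation property to $A\seq B$ produces a clopen $C_n$ with $A\seq C_n\seq B$.

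It then remains to recurse on $X'\df X\setminus C_n$, which is clopen and hence itself a Boolean space. Since $C_n\seq B$ is disjoint from each $K_i$ ($i<n$), the sets $K_1,\dots,K_{n-1}$ are pairwise disjoint closed subsets of $X'$; and since $X\setminus\bigcup_{i<n}Z_i\seq A\seq C_n$, we get $X'\seq\bigcup_{i<n}Z_i$, so the traces $Z_i\cap X'$ form an open cover of $X'$ with $K_i\seq Z_i\cap X'$. The induction hypothesis then yields a clopen (in $X'$) partition $C_1,\dots,C_{n-1}$ with $K_i\seq C_i\seq Z_i\cap X'\seq Z_i$. Because $X'$ is clopen in $X$, a set clopen in $X'$ is clopen in $X$, so each $C_i$ is clopen in $X$; and $C_1,\dots,C_{n-1},C_n$ partition $X$ with all the required inclusions, completing the induction.

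I expect the only genuinely delicate point to be the inclusion $A\seq B$, which is exactly where the two hypotheses are used: that $\{Z_i\}$ is a \emph{cover} (forcing $x\in Z_n$ in the second case) and that $K_i\seq Z_i$ (keeping $C_n$ clear of the other $K_i$). Everything else is routine bookkeeping about clopen sets and clopen subspaces, so once $A$ and $B$ are set up correctly the argument should go through without obstruction.
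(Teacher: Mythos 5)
Your proof is correct and follows essentially the same route as the paper's: both argue by induction on $n$, interpolate a clopen $C_n$ between the closed set $K_n \cup \bigl(X \setminus \bigcup_{i<n} Z_i\bigr)$ and an open subset of $Z_n$ avoiding $K_1,\dots,K_{n-1}$, and then recurse on the clopen complement $X \setminus C_n$. The only cosmetic difference is that the paper shrinks each $Z_j$ to $Z_j \setminus \bigcup_{i\neq j} K_i$ once and for all as a preliminary ``without loss of generality'' step, whereas you inline that same shrinking into your definition of $B$; the two arguments are otherwise identical.
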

\begin{proof}
Without loss of generality we can assume that for all $i\neq j$ the sets $K_i$ and $Z_j$ are disjoint. Indeed, if this is not the case, the set $Z_j$ can be replaced with the smaller $Z_{j} \setminus \bigcup_{i \in \{1, \dots, n\} \setminus\{j\}} K_i$ for each $j \in \{1, \dots, n\}$, and the latter satisfies this additional hypothesis.
We now prove the statement of the lemma by induction on $n$. The case $n = 1$ is trivial.
Let $n \geq 2$ and suppose that the property holds for $n-1$.  
Let $L_{n}\coloneqq X \setminus (Z_1 \cup \dots \cup Z_{n-1})$;
since $Z_1$, \ldots, $Z_n$ cover $X$, it holds that $L_{n} \seq Z_n$. Moreover, by hypothesis $K_n \seq Z_n$. 
Therefore, $K_n \cup L_{n} \seq Z_n$, the set $K_n \cup L_{n}$ is closed, and  $Z_n$ is open.
Hence, there exists a clopen subset $C_n$ of $X$ such that $K_n \cup L_{n}\seq C_n \seq Z_n$.
Since $C_n \seq Z_n$, it follows from our additional hypothesis that the set $C_n$ is disjoint from $K_1$, \ldots, $K_{n-1}$.
So, $Y \df X \setminus C_n$ is a Boolean space, the sets $K_1$, \ldots, $K_{n-1}$ are closed and disjoint subsets of $Y$, the sets $Z_1 \cap Y$, \ldots, $Z_{n-1} \cap Y$ are an open cover of $Y$ and $K_i \seq Z_i\cap Y$, for every $i \in \{1, \dots, n-1\}$.
By inductive hypothesis, the space $Y$ can be partitioned into clopen subsets $C_1, \dots, C_{n-1}$ such that $K_i \seq C_i \seq Z_{i} \cap Y$, for every $i \in \{1, \dots, n-1\}$.
The sets $C_1, \dots, C_n$ are clopen subsets of $X$ and they are a partition of $X$.
Furthermore,  $K_i \seq C_i \seq Z_i$, for every $i \in \{1, \dots, n\}$.
This concludes the proof.
\end{proof} 
\begin{lemma}\label{l:Dn-is-regular-injective}
	Let $X$ be a multiset and suppose that the following conditions hold.
	\begin{enumerate}
		\item\label{l:Dn-is-regular-injective:item1} The set $X$ is finite and every element of $X$ has a finite denominator.
		\item\label{l:Dn-is-regular-injective:item3} There exists an element $x_0\in X$ with denominator $\nu_1$.
	\end{enumerate}
	Then $X$ is regular injective in $\MS$.
\end{lemma}
\begin{proof}
Let $g\colon B\rmono A$ be any regular monic arrow in $\MS$ and let $f\colon B\to X$ be an arrow in $\MS$.
	We will prove that there exists an arrow $h$ in $\MS$ such that the following diagram commutes:
	\[
		\begin{tikzpicture}
		
			\node (B) at (0,0)	{$B$};
			\node (A) at (0,1.5)	{$A$};
			\node (P) at (1.5,1.5)		{$X$};
	
			\draw [right hook->]	(B) -- (A) node [left, midway] {$g$};
			\draw [->]				(B) -- (P) node [right, midway] {$f$};
			\draw [->, dashed]	(A) -- (P) node [above, midway] {$h$};
		\end{tikzpicture}
	\]
	Let $x_1,\dots,x_n$ be a listing of the elements of $X\setminus \{x_0\}$.
	Let $d_1,\dots,d_n$ be the denominators of $x_1,\dots,x_n$, respectively.
	Fix $i\in \{0,\dots,n\}$. Since $X$ is a finite Boolean space, the singleton $\{x_i\}$ is a clopen subset of $X$.
	Thus, $f^{-1}[\{x_i\}]$ is a clopen subset of $B$.
	The function $g$ is closed by the Closed Map Lemma; therefore, the set $K_i \coloneqq g[f^{-1}[\{x_i\}]]$ is closed in $A$.
	Since $g$ is monic, the sets $K_0,\dots,K_n$ are disjoint.
	For every $i\in \{0,\dots,n\}$, set 
	\[Z_i\coloneqq\{z\in A\mid \zeta_A(z)\geq d_i \}=\zeta_A^{-1}[\{\nu\in \NN\mid \nu\geq d_i\}].\] 
	The set $\{\nu\in \NN\mid \nu\geq d_i\}$ is an open subset of $\NN$ because $d_{i}$ is finite by \cref{l:Dn-is-regular-injective:item1} so, for every $i \in \{0, \dots, n\}$, the set $Z_i$ is an open subset of $A$ because $\zeta_A$ is continuous.
	Notice that $\zeta_{B}(a)\geq d_{i}$ for all $a\in f^{-1}[\{x_{i}\}]$ and, since $g$ is regular monic, all denominators of $K_{i}$ are greater or equal to $d_{i}$.  It follows that $K_{i}\seq Z_{i}$.
	Moreover, $Z_0=A$ by \cref{l:Dn-is-regular-injective:item3}.
	By \cref{l:partition}, the set $A$ can be partitioned into clopen subsets $C_0,\dots,C_n$ such that $K_i\seq C_i\seq Z_i$, for every $i\in \{0,\dots,n\}$.
	Define $h\colon A\to X$ by setting $h(c)=x_i$ for $c\in C_i$.
	The function $h$ is trivially continuous.
	To see that it decreases denominators, let $i\in \{0,\dots,n\}$ and $c\in C_i$.
	Then $\zeta(h(c))=\zeta(x_i)=d_i$.
	Since $c\in C_i\seq Z_i$, it follows that $\zeta_A(c)\geq d_i$.
	Thus, the function $h$ decreases denominators.
	Finally, from $g\left[ f^{-1}[\{x_i\}]\right]\seq C_i$, it is easy to derive $f=h\circ g$.
\end{proof}
\begin{remark}\label{r:countably}
	Let $\{X_i\}_{i\in I}$ be a family of compact topological spaces.
	Each clopen subset of the topological product $\prod_{i\in I}X_i$ is nontrivial only on finitely many coordinates $i\in I$.
	Thus, if $Y$ is a finite discrete space, every continuous function from $\prod_{i\in I}X_i$ into $Y$ depends on finitely many coordinates.
	Furthermore, it follows that if $\{Y_n\}_{n\in \N}$ is a countable family of finite discrete spaces, any continuous function from $\prod_{i\in I}X_i$ to $\prod_{n\in \N}Y_n$ depends on at most countably many coordinates $i\in I$.
\end{remark}
\begin{proposition}\label{p:sorted-quasi-variety}
	The set $\{\Dn \mid n \in \{p^k \mid p \in \P, k \in \Nnot\}\cup \{1\}\}$ is an abstractly co-finite regularly co-generating countable set of regular injective objects.  Hence the category of locally finite MV-algebras is equivalent to a countably-sorted quasi-variety of finitary algebras.
\end{proposition}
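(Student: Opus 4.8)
The plan is to verify, in the dual category $\MS$, that the countable family
\[\GG \df \{\D{n} \mid n \in \{p^k \mid p \in \P, k \in \Nnot\} \cup \{1\}\}\]
has each of the three properties claimed, and then to transport the result to $\MVlf$ via the duality of \cref{t:duality} together with the characterisation \cref{t:quasi-variety-many-sorted} in \cref{t:char-quasi-varieties}. Three of the four points should be immediate. Countability is clear, since the index set sits inside $\Nnot$. For regular injectivity I would observe that each $\D{n}$ has a two-point underlying space with finite denominators $\nu_1$ and $\nu_n$ and an element of denominator $\nu_1$, so that \cref{l:Dn-is-regular-injective} applies verbatim. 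For regular co-generation I would note that, for each admissible $n$, the object $\D{n}$ itself carries two distinct points, one of denominator $\nu_n$ and one of denominator $\nu_1$; this is exactly the hypothesis of \cref{t:how-reg-cogen}, so $\GG$ is regularly co-generating.

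The real work is abstract co-finiteness, i.e.\ that every arrow $f\colon \prod_{j\in J}G_j \to G$ with $G, G_j \in \GG$ factors through a finite sub-product. Since the target $G = \D{n}$ has the finite discrete underlying space $\{0,1\}$, \cref{r:countably} tells me that the underlying map of $f$ depends on only finitely many coordinates $J_0 \seq J$; hence, at the level of Boolean spaces, $f$ factors as $\bar{f}$ composed with the canonical projection $\pi_{J_0}\colon \prod_{j\in J}G_j \to \prod_{j\in J_0}G_j$ (which is an $\MS$-arrow by the universal property of the product). What remains, and is the crux, is to check that the induced $\bar{f}\colon \prod_{j\in J_0}G_j \to G$ decreases denominators. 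Here I would use a padding trick: given $y$ in the sub-product, lift it to a point $x$ of the full product that agrees with $y$ on $J_0$ and takes a value of denominator $\nu_1$ on each remaining coordinate. By the formula for product denominators (\cref{l:limits}) this padding does not change the denominator, so $\zeta(x) = \zeta(y)$; since $f$ decreases denominators and $f(x) = \bar{f}(y)$, I obtain $\zeta(y) = \zeta(x) \geq \zeta_G(\bar{f}(y))$, as needed.

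Finally I would conclude by dualising. Under \cref{t:duality} the family $\GG$ corresponds to an abstractly finite, regularly generating set of regular projective objects in $\MVlf$, and $\MVlf$ is co-complete by (the dual of) \cref{c:complete-and-co}. Invoking \cref{t:quasi-variety-many-sorted} then gives that $\MVlf$ is equivalent to a many-sorted quasi-variety of finitary algebras, and by \cref{r:card-sorts-arity} the number of sorts equals the cardinality of $\GG$, hence is countable. The main obstacle is precisely the denominator-preservation verification in the abstract co-finiteness step; the padding-by-$\nu_1$ device is what carries it through, and it relies on every member of $\GG$ possessing an element of denominator $\nu_1$.
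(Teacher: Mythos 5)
Your proof is correct and follows essentially the same route as the paper: regular injectivity of each $\Dn$ via \cref{l:Dn-is-regular-injective}, regular co-generation via \cref{t:how-reg-cogen}, abstract co-finiteness via \cref{r:countably}, and the conclusion via \cref{t:quasi-variety-many-sorted} of \cref{t:char-quasi-varieties} together with \cref{r:card-sorts-arity}. Your padding-by-$\nu_1$ argument showing that the induced arrow on the finite sub-product decreases denominators is a detail the paper leaves implicit (its citation of \cref{r:countably} only handles the underlying continuous map), and it is exactly the right way to close that step, resting as it does on every $\Dn$ having a point of denominator $\nu_1$.
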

\begin{proof}
The set $\{\Dn \mid n \in \{p^k \mid p \in \P, k \in \Nnot\}\cup\{1\}\}$ is abstractly co-finite by \cref{r:countably}, it is regularly co-generating by \cref{t:how-reg-cogen}, and each $\Dn$ is regular injective by \cref{l:Dn-is-regular-injective}.
In addition, $\MVlf$ is co-complete by \cref{c:complete-and-co}; so, by \cref{t:quasi-variety-many-sorted} in \cref{t:char-quasi-varieties}, $\MVlf$ is equivalent to a countably-sorted quasi-variety of finitary algebras.
\end{proof}
The countably-sorted quasi-variety in \cref{p:sorted-quasi-variety} is actually a countably-sorted variety, as we will prove in \cref{t:sorted-variety}.
\begin{lemma}\label{l:reg-inj-reg-cog}
	The multiset 
	\[C\df\prod_{n\in \{ p^k \mid p \in \P, k \in \Nnot\}\cup \{1\}}\Dn\] 
	is a regular injective regular co-generator of $\MS$.
	Moreover, every arrow from a power of $C$ to $C$ factors through an at most countable sub-power.
\end{lemma}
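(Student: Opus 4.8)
The plan is to verify the three assertions in turn: that $C$ is regular injective, that it is a regular co-generator, and the countable-factorisation property. The first two are short, so I would dispatch them quickly and reserve the effort for the last. For regular injectivity, I would first observe that each factor $\Dn$ is regular injective by \cref{l:Dn-is-regular-injective}: its underlying set $\{0,1\}$ is finite, its denominators $\nu_1$ and $\nu_n$ are finite (since $n \in \{1\} \cup \{p^k\}$), and the point $0$ has denominator $\nu_1$. Then I would use the standard fact that regular injectives are closed under products, which in $\MS$ is a one-line diagram chase: given a regular monic $g \colon B \rmono A$ and an arrow $f \colon B \to C$, compose with the projections to obtain arrows $f_n \colon B \to \Dn$, extend each to $h_n \colon A \to \Dn$ by regular injectivity of $\Dn$, and assemble the $h_n$ into a single $h \colon A \to C$ by the universal property of the product; the equality $h \circ g = f$ then follows because the projections are jointly monic (\cref{r:projections-jointly-monic}).

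For the regular co-generator claim I would appeal directly to the characterisation in \cref{c:reg-cogen}, so that it suffices to exhibit, for each $n \in \{1\} \cup \{p^k \mid p \in \P, k \in \Nnot\}$, a point of $C$ of denominator $\nu_n$. Since the denominator map of a product is the pointwise join of the factors' denominator maps (\cref{l:limits}), the tuple taking value $1$ in the coordinate indexed by $n$ and $0$ in every other coordinate has denominator $\nu_n \vee \bigvee \nu_1 = \nu_n$ (for $n = 1$ the all-zeros tuple serves). Hence $C$ meets the criterion and is a regular co-generator.

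The substantive part is the countable factorisation, so suppose given an arrow $f \colon C^I \to C$. The strategy is to descend to underlying Boolean spaces and then lift back. Since $U(C) = \prod_m \{0,1\}$ is a countable product of finite discrete spaces and each copy of $U(C)$ is compact, \cref{r:countably} applies to the continuous map $U(f) \colon \prod_{i \in I} U(C) \to U(C)$ and produces a countable subset $J \seq I$ through whose projection $U(f)$ factors, say $U(f) = \bar g \circ \pi_J$ with $\bar g \colon U(C)^J \to U(C)$ continuous. It then remains to check that $\bar g$ decreases denominators, so that it lifts to an arrow $\bar f \colon C^J \to C$ of $\MS$; this is exactly where the hypothesis that $C$ carries a point of denominator $\nu_1$ enters. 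For $y \in C^J$ I would pad it to the point $\hat y \in C^I$ agreeing with $y$ on $J$ and equal to the all-zeros (denominator $\nu_1$) point of $C$ off $J$; then $\zeta_{C^I}(\hat y) = \bigvee_{i \in J} \zeta_C(y_i) = \zeta_{C^J}(y)$ while $U(f)(\hat y) = \bar g(y)$, so the denominator-decreasing property of $f$ forces $\zeta_C(\bar g(y)) \leq \zeta_{C^J}(y)$. Thus $\bar f$ is a genuine $\MS$-arrow, and $f = \bar f \circ \pi_J$ by faithfulness of $U$, giving the factorisation through the sub-power $C^J$. I expect this final lifting step to be the main obstacle: \cref{r:countably} controls only the underlying continuous map, and one must separately ensure the factorisation respects denominators, which the padding argument secures.
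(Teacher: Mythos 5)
Your proof is correct and takes essentially the same route as the paper's, which disposes of the three claims by citing \cref{l:Dn-is-regular-injective} together with closure of regular injectives under products, then \cref{c:reg-cogen}, then \cref{r:countably}. The one place you go beyond the paper---the padding argument showing that the factorisation of the underlying continuous map through a countable sub-power lifts to a denominator-decreasing arrow of $\MS$---correctly fills in a step the paper leaves implicit in its appeal to \cref{r:countably}.
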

\begin{proof}
	Regular injective objects are closed under products (see, e.g.\ \cite[Lemma 5.13]{AlgThe}), so \cref{l:Dn-is-regular-injective} entails that $C$ is regular injective.
	Moreover, $C$ is a regular co-generator by \cref{c:reg-cogen}.
	Finally, by \cref{r:countably}, every arrow from a power of $C$ to $C$ factors through an at most countable sub-power.
\end{proof}
\begin{proposition}\label{p:MVlf-quasi-variety}
	The category of locally finite MV-algebras is equivalent to a quasi-variety of algebras, with operations of at most countable arity. 
\end{proposition}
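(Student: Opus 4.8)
The plan is to invoke the characterisation of quasi-varieties in \cref{t:quasi-variety} of \cref{t:char-quasi-varieties}, together with the arity refinement recorded in \cref{r:card-sorts-arity}, and transport everything across the duality $\MVlf \simeq \MS^{\op}$ of \cref{t:duality}. Essentially all of the real work has already been packaged into \cref{l:reg-inj-reg-cog}, so the argument reduces to a careful dualisation of that lemma.

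First I would dispatch the co-completeness hypothesis. By \cref{c:complete-and-co} the category $\MS$ is complete and co-complete; since $\MVlf$ is dually equivalent to $\MS$, it is in particular co-complete. This settles condition (a) of \cref{t:quasi-variety}, and it also guarantees that the relevant co-powers exist, so that the canonical-arrow formulation of a regular generator in \cref{d:regular-generator} is available.

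Next I would dualise the object $C$ of \cref{l:reg-inj-reg-cog}. Under the contravariant equivalence $\MVlf \simeq \MS^{\op}$, regular monic arrows correspond to regular epic arrows and products correspond to co-products; hence a regular injective object of $\MS$ corresponds to a regular projective object of $\MVlf$, and a regular co-generator of $\MS$ corresponds to a regular generator of $\MVlf$. Thus the object $G$ of $\MVlf$ dual to $C$ is a \emph{regular projective regular generator}, which is exactly condition (b) of \cref{t:quasi-variety}. Applying \cref{t:quasi-variety} then yields that $\MVlf$ is equivalent to a quasi-variety of algebras. To obtain the arity bound, I would read off the second assertion of \cref{l:reg-inj-reg-cog}: every arrow from a power of $C$ to $C$ factors through an at most countable sub-power. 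Dualising, every arrow from $G$ to a co-power of $G$ factors through an at most countable sub-co-power, which by \cref{r:card-sorts-arity} is precisely the condition ensuring that the quasi-variety delivered by \cref{t:quasi-variety} can be chosen in a signature whose operations each depend on at most countably many coordinates. This gives the stated conclusion.

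The one delicate point is a bookkeeping matter rather than a genuine obstacle: one must check that the duality sends each categorical ingredient to exactly the dual ingredient demanded by the cited results, namely regular injective $\leftrightarrow$ regular projective, regular co-generator $\leftrightarrow$ regular generator, and, most importantly, that ``factors through a countable sub-power'' on the $\MS$ side becomes ``factors through a countable sub-co-power'' on the $\MVlf$ side, so that \cref{r:card-sorts-arity} applies verbatim. Once these correspondences are confirmed, the proof is immediate.
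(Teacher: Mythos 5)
Your proposal is correct and follows essentially the same route as the paper's own proof: co-completeness from \cref{c:complete-and-co}, the regular projective regular generator with countable factorisation obtained by dualising \cref{l:reg-inj-reg-cog}, and the conclusion via \cref{t:char-quasi-varieties} and \cref{r:card-sorts-arity}. Your explicit verification of the dualisation bookkeeping (regular injective $\leftrightarrow$ regular projective, regular co-generator $\leftrightarrow$ regular generator, sub-power $\leftrightarrow$ sub-co-power) is accurate and merely spells out what the paper leaves implicit.
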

\begin{proof}
	By \cref{c:complete-and-co}, $\MVlf$ is co-complete.
	By \cref{l:reg-inj-reg-cog}, $\MVlf$ has a regular projective regular generator $G$ such that every arrow from $G$ to a co-power of $G$ factors through an at most countable sub-co-power.
	Therefore, by \cref{t:char-quasi-varieties}, $\MVlf$ is equivalent to a quasi-variety of algebras and, by \cref{r:card-sorts-arity}, the operations depend on at most countably many coordinates.
\end{proof}
The quasi-variety in \cref{p:MVlf-quasi-variety} is actually a variety, as we will prove in \cref{t:MVlf-variety}.
%

\section{From quasi-varieties to varieties: effective equivalence relations}
\label{s:from-quasi-variety-to-variety}

In this section, we prove that all reflexive (internal) relations in $\MVlf$ are effective equivalence relations.
To study equivalence relations in $\MVlf$ we look at their dual in $\MS$.
To do so, we need to introduce some notation and tools for the dual concepts. 

By a \emph{co-subobject} we mean the dual notion of subobject: a co-subobject on $X$ is an equivalence class of epic arrows with domain $X$.
With a little abuse of notation, we refer to a co-subobject just by one of its representatives.
By a \emph{(binary) co-relation on $X$} we mean a co-subobject on $X+X$.  Notice that arrows with domain $X+X$ bijectively correspond to pairs of parallel arrows with domain $X$.
Dualising the respective notion for relations, we say that a co-relation $q_0, q_1 \colon X \rightrightarrows S$ on $X$ is \emph{reflexive} if  there exists an arrow $d \colon S \to X$ such that the following two diagrams commute:
\begin{figure}[H]
\centering
\begin{tikzcd}
	X \arrow{r}{q_0} \arrow[swap]{rd}{1_X}	& S \arrow[dashed]{d}{d} \\
											& X
\end{tikzcd}
\hspace{2cm}
\begin{tikzcd}
	X \arrow{r}{q_1} \arrow[swap]{rd}{1_X}	& S \arrow[dashed]{d}{d} \\
											& X
\end{tikzcd}
\end{figure}
\emph{Symmetric}, \emph{transitive} and \emph{equivalence} co-relations are obtained in a similar way.
Dualising \Cref{d:effective-exact}, we say that an equivalence co-relation $q_0, q_1 \colon X \rightrightarrows S$ on the multiset $X$ is \emph{effective} if it is the co-kernel pair of some arrow.
\begin{lemma}\label{l:char-co-refl}
If a co-relation $q_0, q_1 \colon X \rightrightarrows S$ on $X$ in $\MS$ is reflexive, then for all $i, j \in \{0,1\}$ and $x,y\in X$:
\begin{enumerate}
\item $q_i(x)=q_j(y)\text{ implies }  x=y$,
\item $\zeta_{S}(q_i(x))=\zeta_{X}(x).$
\end{enumerate}
\end{lemma}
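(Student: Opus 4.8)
The plan is to exploit the common retraction $d \colon S \to X$ furnished by reflexivity, which by definition satisfies $d \circ q_0 = 1_X$ and $d \circ q_1 = 1_X$. Both assertions then follow from the single structural fact that every arrow of $\MS$ decreases denominators (condition \eqref{eq:respect-denominators}), so the argument is short.

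For the first assertion, I would fix $x, y \in X$ and $i, j \in \{0,1\}$ with $q_i(x) = q_j(y)$ and simply apply $d$ to both sides. Since $d$ is a left inverse of each $q_i$, this gives $x = d(q_i(x)) = d(q_j(y)) = y$, as required.

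For the second assertion, I would bound $\zeta_S(q_i(x))$ from both sides by $\zeta_X(x)$. On the one hand, $q_i$ is an $\MS$-arrow, hence decreases denominators, yielding $\zeta_X(x) \geq \zeta_S(q_i(x))$. On the other hand, $d$ is also an $\MS$-arrow, so $\zeta_S(q_i(x)) \geq \zeta_X(d(q_i(x))) = \zeta_X(x)$, where the last equality again uses $d \circ q_i = 1_X$. The two inequalities force $\zeta_S(q_i(x)) = \zeta_X(x)$.

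I do not expect a genuine obstacle here; the only point demanding attention is the correct reading of the dualised reflexivity condition, namely that the witnessing arrow $d$ is a common \emph{left} inverse of $q_0$ and $q_1$ (in contrast to the section one meets in the ordinary, non-dual definition of a reflexive relation). Once this is in place, everything reduces to composing with $d$ and invoking the denominator-decreasing property of arrows in $\MS$.
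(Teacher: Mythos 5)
Your proposal is correct and follows essentially the same argument as the paper: both parts hinge on the common left inverse $d$ provided by reflexivity, with part (1) obtained by composing with $d$ and part (2) by the two denominator inequalities $\zeta_X(x) \geq \zeta_S(q_i(x)) \geq \zeta_X(d(q_i(x))) = \zeta_X(x)$. Your closing remark about reading the dualised reflexivity condition correctly (the witness $d$ is a retraction, not a section) is exactly the right point of care, and it is handled properly.
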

\begin{proof}
By reflexivity, there exists a map $d \colon S \to X$ such that $d \circ q_0 = 1_X$, and $d \circ q_ 1 = 1_X$.
Therefore, if $q_i(x)=q_j(y)$, then $x = d(q_i(x)) = d(q_j(x)) = y$.

Since $q_i$ is an arrow of multisets, we have $\zeta_S(q_i(x)) \leq \zeta_X(x)$.
For the converse inequality, since $d$ decreases denominators, we have $\zeta_S(q_i(x)) \geq \zeta_X(d(q_i(x))) = \zeta_X(x)$.
Therefore, $\zeta_S(q_i(x)) = \zeta_X(x)$.
\end{proof}

It should be noted that also the converse of the previous lemma holds, thus providing a characterisation of reflexive co-relations in $\MS$. For a proof see \cref{l:char-refl} in the Appendix.
\begin{proposition} \label{p:refl-implies-effective}
Every reflexive co-relation in $\MS$ is an effective equivalence co-relation.
\end{proposition}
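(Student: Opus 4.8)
The plan is to exhibit, for a given reflexive co-relation $q_0, q_1 \colon X \rightrightarrows S$, an explicit arrow of which it is the co-kernel pair; since every co-kernel pair is automatically an equivalence co-relation (the dual of the folklore fact recalled in \cref{d:effective-exact}), this yields both ``equivalence'' and ``effective'' at once. The natural candidate is the equaliser of $q_0$ and $q_1$. Set $E \df \{x \in X \mid q_0(x) = q_1(x)\}$; as $S$ is Hausdorff this is a closed subspace of $X$, which I endow with the denominator map restricted from $X$, writing $e \colon E \hookrightarrow X$ for the inclusion. Then $e$ is an arrow of $\MS$ satisfying $q_0 \circ e = q_1 \circ e$, and the claim is that $q_0, q_1$ is the co-kernel pair of $e$ (the pushout of $e$ with itself).

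Because $U \colon \MS \to \BS$ is topological, \cref{l:colimits-MS} reduces the claim to two independent verifications. First, at the level of underlying sets: the pushout of $e$ with itself is the quotient of $X \sqcup X$ that identifies the two copies of each point of $E$, and the map sending the class of $(x,i)$ to $q_i(x)$ is a well-defined bijection onto $S$. Surjectivity holds because the combined arrow $[q_0,q_1] \colon X+X \to S$ is epic, hence surjective (\cref{l:arrows-in-MS}); injectivity holds because the first conclusion of \cref{l:char-co-refl} forces every coincidence $q_i(x)=q_j(y)$ to satisfy $x=y$, and to place $x$ in $E$ whenever $i \neq j$. Since the forgetful functor $\BS \to \Set$ reflects finite colimits (\cref{l:reflect-colimits}), it follows that $(S,q_0,q_1)$ is the pushout of $e$ with itself in $\BS$.

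Second, at the level of denominators: by the finite-colimit formula of \cref{l:colimits-MS}, the denominator of the co-kernel pair at a point $s$ is the infimum of the denominators $\zeta_X(y)$ over all preimages of $s$, taken in both copies of $X$ and in $E$. The injectivity analysis above shows that every preimage of $q_i(x)$ carries denominator $\zeta_X(x)$, so this infimum is $\zeta_X(x)$; on the other hand the second conclusion of \cref{l:char-co-refl} gives $\zeta_S(q_i(x)) = \zeta_X(x)$. Hence $\zeta_S$ coincides with the colimit denominator, and $(S,q_0,q_1)$ is genuinely the co-kernel pair of $e$ in $\MS$. The degenerate case $E = \emptyset$ is subsumed: there the co-kernel pair is just the pair of coproduct injections $X \rightrightarrows X+X$, matching the situation where $q_0$ and $q_1$ have disjoint images.

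I expect the crux to be the bookkeeping of identifications in the underlying-space pushout, specifically ruling out ``cross'' coincidences $q_0(x)=q_1(y)$ with $x \neq y$ --- exactly what the first conclusion of \cref{l:char-co-refl} supplies --- and, on the denominator side, ensuring that $E$ carries the \emph{restricted} denominator (so that $e$ is regular monic) so that the infimum in \cref{l:colimits-MS} returns $\zeta_S$ rather than a strictly smaller value.
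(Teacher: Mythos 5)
Your proof is correct and takes essentially the same route as the paper's: the paper likewise sets $Y \df \{x\in X\mid q_0(x)=q_1(x)\}$ with the subspace topology and restricted denominator, uses \cref{l:char-co-refl} to show the resulting square is a pushout in $\Set$, and then lifts it to $\BS$ via \cref{l:reflect-colimits} and to $\MS$ via the finite-colimit denominator formula of \cref{l:colimits-MS}. The only difference is presentational, in that you spell out the surjectivity/injectivity bookkeeping and the denominator infimum more explicitly than the paper does.
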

\begin{proof}
	Let $q_0, q_1 \colon X \rightrightarrows S$ be a reflexive co-relation on a multiset $X$.
	Set $Y\df \{x\in X\mid q_0(x)=q_1(x) \}$.
	By \cref{l:char-co-refl}, we have $q_i(x)=q_j(y) \Rightarrow  x = y$.
	Therefore, for every $x, y \in X$ and every $i \in \{0,1\}$, 
	\begin{enumerate}
		\item $q_0(x) = q_1 (y)$ if and only if $x \in Y$, $y \in Y$ and $x = y$.
		\item $q_i(x) = q_i (y)$ if and only if $x = y$.
	\end{enumerate}
	Therefore, the following is a pushout in $\Set$:
	\begin{equation}\label{e:pushout-effective}
	\begin{tikzcd}
	Y \arrow[hookrightarrow]{r}{}& X \arrow{d}{q_0} \\
	X \arrow[hookleftarrow]{u}{} \arrow[swap]{r}{q_1}& S
	\end{tikzcd}
	\end{equation}
	Equip $Y$ with the subspace topology.
	By \cref{l:reflect-colimits}, the forgetful functor from $\BS$ to $\Set$ reflects co-limits, and therefore \eqref{e:pushout-effective} is a pushout in $\BS$, as well.
	Equip $Y$ with the the denominator map $\zeta_Y \colon Y \to \NN$, $y \mapsto \zeta_X(y)$.
	By \cref{l:char-co-refl}, $\zeta_{S}(q_i(x))=\zeta_{X}(x)$.
	Hence, by the construction of finite co-limits (see \cref{l:colimits-MS}) in $\MS$, \eqref{e:pushout-effective} is a pushout also in $\MS$. 
\end{proof}
\begin{corollary}\label{c:refl-is-effective}
Every reflexive relation in $\MVlf$ is an effective equivalence relation. Thus in particular $\MVlf$ is a Mal'tsev category\footnote{Recall that a category is called \emph{Mal'tsev} if it has finite limits and every reflexive relation is an equivalence relation.}.
\end{corollary}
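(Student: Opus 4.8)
The plan is to derive this corollary as the dual of \cref{p:refl-implies-effective}, transported across the dual equivalence $\MVlf \simeq \MS^{\op}$ of \cref{t:duality}. First I would set up the dictionary translating internal relations into co-relations. Because the equivalence is contravariant, it sends products to coproducts and monic arrows to epic arrows; hence a relation on $A$ in $\MVlf$, i.e.\ a subobject $R \mono A \times A$, corresponds to a co-subobject on $X + X$ in $\MS$, where $X$ is the image of $A$ under the duality. Each defining diagram for reflexivity, symmetry, and transitivity (\cref{d:equivalence-relation}) is carried by the contravariant functor to exactly the diagram defining the corresponding property of a co-relation, as set up in the discussion preceding \cref{l:char-co-refl}. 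Similarly, a kernel pair is a pullback, so it is sent to a pushout, i.e.\ a co-kernel pair; thus effectiveness of an equivalence relation (\cref{d:effective-exact}) translates precisely into effectiveness of the dual equivalence co-relation.

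With this dictionary in place, the first assertion is immediate. Given any reflexive relation in $\MVlf$, I would transport it across the duality to obtain a reflexive co-relation in $\MS$. By \cref{p:refl-implies-effective}, this co-relation is an effective equivalence co-relation. Transporting back along the duality, the original relation is therefore an effective equivalence relation.

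For the Mal'tsev claim I would verify the two defining conditions. On the one hand, $\MS$ is co-complete by \cref{c:complete-and-co}, so $\MVlf \simeq \MS^{\op}$ is complete and in particular has finite limits. On the other hand, the first part of the corollary shows that every reflexive relation in $\MVlf$ is an equivalence relation (indeed an effective one). Together these are exactly the two requirements in the definition of a Mal'tsev category, so $\MVlf$ is Mal'tsev.

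The argument is a formal dualisation, so there is no analytic difficulty; the only point demanding care is the bookkeeping of the first paragraph—confirming that the contravariant equivalence faithfully translates each piece of structure (the reflexivity, symmetry, and transitivity diagrams, and the passage from kernel pairs to co-kernel pairs) into its dual. Once that translation is justified, the statement follows at once from \cref{p:refl-implies-effective}.
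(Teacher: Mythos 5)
Your proposal is correct and is exactly the argument the paper intends: the corollary is stated without proof precisely because it is the formal dualisation of \cref{p:refl-implies-effective} across the equivalence $\MVlf \simeq \MS^{\op}$ of \cref{t:duality}, with completeness of $\MVlf$ (hence finite limits) coming from co-completeness of $\MS$ in \cref{c:complete-and-co}. Your careful bookkeeping of how subobjects of $A \times A$ become co-subobjects of $X + X$ and kernel pairs become co-kernel pairs matches the dictionary the paper sets up at the start of \Cref{s:from-quasi-variety-to-variety}.
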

\begin{theorem}\label{t:MVlf-variety}
The category of locally finite MV-algebras is equivalent to a variety of algebras, with operations of at most countable arity.
\end{theorem}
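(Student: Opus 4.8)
The plan is to upgrade \cref{p:MVlf-quasi-variety} from a quasi-variety to a variety by feeding in the effectiveness of equivalence relations established in \cref{c:refl-is-effective}, and then reading off the arity bound exactly as before. All the genuinely hard work has already been done; the proof is an assembly of prior results through the characterisation theorem \cref{t:char-quasi-varieties}.

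First I would recall the ingredients already in place. By \cref{c:complete-and-co}, the category $\MVlf$ is co-complete. By \cref{l:reg-inj-reg-cog}, $\MS$ has a regular injective regular co-generator $C$ such that every arrow from a power of $C$ to $C$ factors through an at most countable sub-power; dualising along \cref{t:duality}, $\MVlf$ has a regular projective regular generator $G$ such that every arrow from $G$ to a co-power of $G$ factors through an at most countable sub-co-power. These are precisely hypotheses (a) and (b) of \cref{t:quasi-variety} in \cref{t:char-quasi-varieties}, and they already yielded the quasi-variety statement of \cref{p:MVlf-quasi-variety}.

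The new ingredient is condition (c): every equivalence relation in $\MVlf$ is effective. Since every equivalence relation is in particular reflexive, this is immediate from \cref{c:refl-is-effective}, which asserts that every reflexive relation in $\MVlf$ is an effective equivalence relation (itself resting on the dual statement \cref{p:refl-implies-effective} proved in $\MS$). With (a), (b), and (c) all verified, I would apply the ``variety'' strengthening \cref{t:quasi-variety-effective} of \cref{t:char-quasi-varieties} to the first item \cref{t:quasi-variety}, concluding that $\MVlf$ is equivalent not merely to a quasi-variety but to a variety of algebras. Finally, \cref{r:card-sorts-arity} translates the at-most-countable factorisation property of $G$ into the statement that every operation of the variety depends on at most countably many coordinates, i.e.\ has at most countable arity.

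I do not anticipate any real obstacle at this stage: the difficulty was concentrated in \cref{c:refl-is-effective} and in the construction of the regular injective regular co-generator of \cref{l:reg-inj-reg-cog}. The only point requiring mild care is the dualisation between $\MS$ and $\MVlf$, namely that ``regular injective regular co-generator'' transports to ``regular projective regular generator'' and ``effective equivalence co-relation'' to ``effective equivalence relation''; both are routine consequences of the duality \cref{t:duality} and the dual formulations set up at the start of \cref{s:from-quasi-variety-to-variety}.
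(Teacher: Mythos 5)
Your proposal is correct and follows essentially the same route as the paper: the paper's proof likewise combines \cref{p:MVlf-quasi-variety} (which already packages co-completeness, the regular projective regular generator dual to $C$, and the countable-arity bound via \cref{r:card-sorts-arity}) with \cref{c:refl-is-effective} and then applies \cref{t:quasi-variety-effective} of \cref{t:char-quasi-varieties}. Your version merely unfolds the hypotheses of \cref{p:MVlf-quasi-variety} explicitly, and your observation that effectiveness of all equivalence relations follows from \cref{c:refl-is-effective} because equivalence relations are in particular reflexive is exactly the intended reading.
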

\begin{proof}
By \cref{p:MVlf-quasi-variety}, $\MVlf$ is equivalent to a quasi-variety of algebras, with operations of at most countable arity.
By \cref{c:refl-is-effective}, every reflexive relation in $\MVlf$ is an effective equivalence relation.
The statement of the theorem follows by an application of \cref{t:char-quasi-varieties}.
\end{proof}
\begin{theorem}\label{t:sorted-variety}
The category of locally finite MV-algebras is equivalent to a countably-sorted variety of finitary algebras.
\end{theorem}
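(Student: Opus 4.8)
The plan is to combine the countably-sorted quasi-variety structure already established in \cref{p:sorted-quasi-variety} with the effectiveness of equivalence relations proved in \cref{c:refl-is-effective}, and then invoke the variety half of the abstract characterisation in \cref{t:char-quasi-varieties}.

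First I would recall that \cref{p:sorted-quasi-variety} exhibits $\{\Dn \mid n \in \{p^k \mid p \in \P, k \in \Nnot\} \cup \{1\}\}$ as an abstractly co-finite, regularly co-generating countable set of regular injective objects in $\MS$. Dualising, $\MVlf$ acquires an abstractly finite, regularly generating countable set of regular projective objects. Together with the co-completeness of $\MVlf$ (\cref{c:complete-and-co}), the hypotheses of \cref{t:quasi-variety-many-sorted} in \cref{t:char-quasi-varieties} are met, yielding the equivalence of $\MVlf$ with a countably-sorted quasi-variety of finitary algebras, exactly as in \cref{p:sorted-quasi-variety}.

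Next I would upgrade ``quasi-variety'' to ``variety''. By \cref{t:quasi-variety-effective}, the many-sorted characterisation of \cref{t:quasi-variety-many-sorted} remains valid for varieties provided one adds the requirement that every equivalence relation in $\C$ be effective. But \cref{c:refl-is-effective} asserts that every reflexive relation in $\MVlf$---and hence, a fortiori, every equivalence relation---is an effective equivalence relation. Thus condition (c) of \cref{t:quasi-variety-effective} is satisfied, and I conclude that $\MVlf$ is equivalent to a countably-sorted variety of finitary algebras.

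Since all the substantive work has already been carried out---the regular injectivity of the $\Dn$ in \cref{l:Dn-is-regular-injective}, the characterisation of regularly co-generating sets in \cref{t:how-reg-cogen}, and the effectiveness result in \cref{c:refl-is-effective}---there is no genuine obstacle left: the argument is a bookkeeping exercise that matches the already-verified categorical properties of $\MVlf$ against the clauses of \cref{t:char-quasi-varieties}. The only point requiring a moment's care is to confirm that the number of objects in the regularly generating set corresponds to the number of sorts, which is recorded in \cref{r:card-sorts-arity}; this gives the countably-sorted conclusion with operations of finite arity.
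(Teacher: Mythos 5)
Your proposal is correct and follows essentially the same route as the paper's own proof: invoke the countably-sorted quasi-variety equivalence, note via \cref{c:refl-is-effective} that every equivalence relation in $\MVlf$ is effective, and apply \cref{t:quasi-variety-effective} of \cref{t:char-quasi-varieties}. In fact your citation of \cref{p:sorted-quasi-variety} is the accurate one---the paper's proof cites \cref{p:MVlf-quasi-variety} while stating the countably-sorted conclusion, an apparent slip of reference that your write-up quietly corrects.
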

\begin{proof}
By \cref{p:MVlf-quasi-variety}, $\MVlf$ is equivalent to a countably-sorted quasi-variety of finitary algebras.
By \cref{c:refl-is-effective}, every reflexive relation in $\MVlf$ is an effective equivalence relation.
The statement of the theorem follows by an application of \cref{t:char-quasi-varieties}.
\end{proof}
We conclude the main development of our work by pointing out that from our results it follows that $\MVlf$ is an exact category (in the sense of Barr), i.e., a regular category in which every equivalence relation is effective.  For more details on regular and exact categories, we refer the reader to \cite{BarrGrilletvanOsdol1971}.


\section{Appendix}
\label{s:appendix}


\subsection{The topology of \texorpdfstring{$\NN$}{N}}
\label{ss:topology-on-N}

In \cite{CigDubMun04}, the authors erroneously state that the topology on $\NN$ having as an open basis the sets of the form $\{\nu\in \NN\mid\nu> \nu_n\}$  for $n\in \Nnot$ coincides with the topology having as an open subbasis the sets of the form $\{\nu\in \NN\mid\nu(p)> k\}$ for $p\in \P$ and $k\in \N$.  However, the error does not appear in \cite[Section 8.4]{Mun2011}.
The statement in \cite{CigDubMun04} is incorrect for two reasons.
First, the sets of the form $\{\nu\in \NN\mid\nu> \nu_n\}$  for $n\in \Nnot$ do not form a basis for any topology on $\NN$.
Indeed, these sets do not cover $\NN$ because, for every $n\in \Nnot$, we have $\nu_1\notin\{\nu\in \NN\mid\nu> \nu_n\}$.
Secondly: even if we replace ``basis'' by ``subbasis'', the statement remains incorrect: the topology on $\NN$ having as an open subbasis the sets of the form $\{\nu\in \NN\mid\nu> \nu_n\}$  for $n\in \Nnot$ does not coincide with the topology having as an open subbasis the sets of the form $\{\nu\in \NN\mid\nu(p)> k\}$ for $p\in \P$ and $k \in \N$.
This is implied by the statement $\tau_1\not\subseteq \tau_4$ in the following lemma.
\begin{lemma}\label{l:various-top}
Let $\tau_1$, $\tau_2$, $\tau_3$, $\tau_4$, $\tau_5$, $\tau_6$ be the topologies on $\NN$ generated by the following families of sets, respectively:
\begin{enumerate}
\item $\{\nu\in \NN\mid\nu(p)> k\}$ for $p\in \P$, $k\in \N$;
\item $\{\nu\in \NN\mid\nu(p)\geq k\}$ for $p\in \P$, $k\in \N$;
\item $\{\nu\in \NN\mid\nu\geq \nu_n\}$ for $n\in \Nnot$;
\item $\{\nu\in \NN\mid\nu> \nu_n\}$  for $n\in \Nnot$;
\item $\{\nu\in \NN\mid\nu\not\leq \nu_n\}$ for $n\in \Nnot$;
\item $\{\nu\in \NN\mid\nu\not< \nu_n\}$ for $n\in \Nnot$.
\end{enumerate}
Then $\tau_1 = \tau_2 = \tau_3$, and they all coincide with the topology we equipped $\NN$ with in \Cref{s:preliminaries}. Furthermore,  $\tau_5=\tau_6$, $\tau_4 \subsetneq \tau_1$ and $\tau_5\subsetneq\tau_1$, and the topologies $\tau_4$ and $\tau_5$ are not comparable.
\begin{center}
\begin{tikzpicture}[node distance=2cm]
\node(1)								{$\tau_1 = \tau_2 = \tau_3$};
\node(4)	[below left of=1]	{$\tau_4$};
\node(5)	[below right of=1]	{$\tau_5 = \tau_6$};

\draw(1) -- (4);
\draw(1) -- (5);

\end{tikzpicture}
\end{center}
\end{lemma}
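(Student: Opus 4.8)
The plan is to treat the lemma as a package of containments between topologies on $\NN$ and to prove each containment $\tau_i \seq \tau_j$ by the same device: rewrite every generator of $\tau_i$ as a $\tau_j$-open set, i.e.\ as a union of finite intersections of generators of $\tau_j$. Throughout I abbreviate the generators by $U_n \df \{\nu \mid \nu \ge \nu_n\}$ (for $\tau_3$), $C_n \df \{\nu \mid \nu > \nu_n\}$ (for $\tau_4$), $A_n \df \{\nu \mid \nu \not\le \nu_n\}$ (for $\tau_5$), and $B_n \df \{\nu \mid \nu \not< \nu_n\}$ (for $\tau_6$), keeping the notation $U_{p,k} = \{\nu \mid \nu(p) > k\}$ from \Cref{s:preliminaries}. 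The elementary facts used repeatedly are that $\nu \ge \nu_n$ is the conjunction over primes of $\nu(p) \ge \nu_n(p)$, that $\nu_n(p) \ne 0$ for only finitely many $p$, and that $\{\nu_d \mid d \mid n\}$ is exactly the set of elements below $\nu_n$.

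For the top block $\tau_1 = \tau_2 = \tau_3$ the identities are immediate: $U_{p,k} = \{\nu \mid \nu(p) \ge k+1\}$ exhibits $\tau_1 = \tau_2$, while $U_{p,k} = U_{p^{k+1}}$ together with the finite intersection $U_n = \bigcap_{p \mid n}\{\nu \mid \nu(p) \ge \nu_n(p)\}$ exhibits $\tau_1 = \tau_3$; these visibly agree with the topology of \Cref{s:preliminaries}. The same bookkeeping settles the two ``upper'' containments for $\tau_4$ and $\tau_5$: since $C_n = \bigcup_{q \in \P}(U_n \cap U_{q, \nu_n(q)})$ and $A_n = \bigcup_{p \in \P} U_{p, \nu_n(p)}$, both $C_n$ and $A_n$ are unions of $\tau_1$-open sets, so $\tau_4 \seq \tau_1$ and $\tau_5 \seq \tau_1$.

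The heart of the argument is $\tau_5 = \tau_6$. One inclusion is easy: the complement of $B_n$ is the finite set of \emph{proper} divisors of $\nu_n$, and a short computation gives $B_n = \bigcap_{m \mid n,\, m \ne n} A_m$, a finite intersection, so $B_n \in \tau_5$ and $\tau_6 \seq \tau_5$. The reverse inclusion is the subtle point. Naively $A_n = B_n \setminus \{\nu_n\}$ removes a single point and is not visibly open; the fix is the \emph{infinite} union $A_n = \bigcup_{\ell \in \P} B_{n\ell}$. Verifying this reduces to the arithmetic statement that a finite $x$ is a proper divisor of $n\ell$ for \emph{every} prime $\ell$ only if $x \mid n$: if $x \nmid n$, some prime $p$ has $\nu_x(p) > \nu_n(p)$, and choosing $\ell \ne p$ keeps $\nu_{n\ell}(p) = \nu_n(p) < \nu_x(p)$, so $x \nmid n\ell$. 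This is the step I expect to be the main obstacle, both to spot and to phrase cleanly; once it is in place $A_n \in \tau_6$, giving $\tau_5 \seq \tau_6$ and hence equality.

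It remains to separate the topologies, for which I will record two structural facts. First, every nonempty $\tau_5$-open set is cofinite: each $A_m$ has finite complement, finite intersections of the $A_m$ stay cofinite, and arbitrary unions of cofinite sets remain cofinite. Second, for a prime $q$ the only generator $C_m$ of $\tau_4$ containing $\nu_q$ is $C_1 = \NN \setminus \{\nu_1\}$ (since $\nu_q > \nu_m$ forces $m = 1$), so every $\tau_4$-open set containing $\nu_q$ must contain all of $\NN \setminus \{\nu_1\}$. With these in hand the separation claims follow from explicit witnesses. For a prime $p$: the set $U_p \in \tau_1$ has infinite complement, hence is not cofinite, so $U_p \notin \tau_5$; and since any $\tau_4$-open containing $\nu_p$ must contain $\NN \setminus \{\nu_1\}$ while $U_p$ does not, also $U_p \notin \tau_4$, giving $\tau_5 \subsetneq \tau_1$ and $\tau_4 \subsetneq \tau_1$. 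For incomparability, $C_p \in \tau_4$ has infinite complement so $C_p \notin \tau_5$, whereas $A_p = \NN \setminus \{\nu_1, \nu_p\} \in \tau_5$ contains $\nu_q$ for any prime $q \ne p$ but not all of $\NN \setminus \{\nu_1\}$, so $A_p \notin \tau_4$. These last two witnesses show $\tau_4$ and $\tau_5$ are incomparable, completing the diagram.
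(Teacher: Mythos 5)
Your proof is correct and follows essentially the same route as the paper's: the same generator-rewriting identities (including the crucial one for $\tau_5 \subseteq \tau_6$, namely $\{\nu \in \NN \mid \nu \not\leq \nu_n\} = \bigcup_{\ell \in \P}\{\nu \in \NN \mid \nu \not< \nu_{n\ell}\}$, verified by the same arithmetic argument), the same observation that every nonempty $\tau_5$-open set is cofinite, and the same separating witnesses up to the choice of primes (the paper uses $p=2$ and $\nu_3$ where you use generic $p$ and $q$). The only difference is cosmetic: you factor out, as a reusable fact, that any $\tau_4$-open set containing $\nu_q$ for a prime $q$ must contain $\NN \setminus \{\nu_1\}$, whereas the paper runs that contradiction argument twice inline.
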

\begin{proof}
\ 
\begin{description}
\item[$\tau_2\subseteq \tau_1$]
If $k\neq 0$, then $\{\nu\in \NN\mid\nu(p)\geq k\}=\{\nu\in \NN\mid\nu(p)> k-1\}$.
If $k=0$, then $\{\nu\in \NN\mid\nu(p)\geq k\}=\NN$.
\item[$\tau_3\subseteq \tau_2$]
Let $n=p_1^{k_1}\cdot\dots \cdot p_l^{k_l}$ be the unique prime factorisation of $n$.
Then 
\[
\{\nu\in \NN\mid\nu\geq \nu_n\}=\bigcap_{i=1}^l \{\nu\in \NN\mid\nu(p_i)\geq k_i\}.
\]
\item[$\tau_1\subseteq \tau_3$]
We have $\{\nu\in \NN\mid\nu(p)> k\}=\{\nu\in \NN\mid\nu\geq \nu_{p^{k+1}}\}$.
\item[$\tau_4\subseteq \tau_3$]
We have $\{\nu\in \NN\mid\nu> \nu_n\}=\bigcup_{m \in \Nnot: \nu_m>\nu_n} \{\nu\in \NN\mid\nu\geq \nu_m\}$.
\item[$\tau_5\subseteq \tau_1$]
We have $\{\nu\in \NN\mid\nu\not\leq \nu_n\}=\bigcup_{p\in \P}\{\nu\in \NN\mid \nu(p)>\nu_n(p) \}$.  
\item[$\tau_6\subseteq \tau_5$]
For all $\nu\in \NN$ and $n \in \Nnot$, the condition $\nu<\nu_n$ holds if and only if there exists $m \in \Nnot$ with $\nu_m<\nu_n$ such that $\nu \leq \nu_m$.
Contrapositively, the condition $\nu \not< \nu_n$ holds if and only if, for every $m \in \Nnot$ with $\nu_m < \nu_n$, we have $\nu \not\leq \nu_m$.
Then 
\[
\{\nu\in \NN\mid\nu\not< \nu_n\}=\bigcap_{m\in \Nnot:\nu_m<\nu_n}\{\nu\in \NN\mid\nu\not\leq \nu_m\}.
\]
Note that the intersection is finite.

\item[$\tau_5\subseteq \tau_6$]
Since $\{\nu\in \NN\mid\nu \leq \nu_n\}=\bigcap_{p\in \P}\{\nu\in \NN\mid\nu< \nu_{np}\}$, it follows
\[
\{\nu\in \NN\mid\nu\not\leq \nu_n\}=\bigcup_{p\in \P}\{\nu\in \NN\mid\nu\not< \nu_{np}\}.
\]

\item[$\tau_4\not\subseteq \tau_5$]
For every $n \in \Nnot$, the set $\{\nu \in \NN \mid \nu \not\leq \nu_n\} = \NN \setminus \{\nu \in \NN \mid \nu \leq \nu_n\}$ is co-finite.
Therefore, $\tau_5$ is generated by co-finite sets; thus, every nonempty element in $\tau_5$ is co-finite.
The topology $\tau_4$ contains nonempty elements which are not co-finite, such as $\{\nu\in \N\mid\nu>\nu_{2}\}$: indeed, for every $m \in \P \setminus \{2\}$, the element $\nu_m$ belongs to $\NN \setminus \{\nu\in \N\mid\nu>\nu_{2}\}$.

\item[$\tau_5\not\subseteq \tau_4$]
We have $\NN\setminus\{\nu_1,\nu_2\}=\{\nu\in \NN\mid \nu\not\leq \nu_2\}\in \tau_5$.
We claim that $\NN\setminus\{\nu_1,\nu_2\}\notin \tau_4$.
We show that there is no finite list $n_1,\dots,n_k$ of elements of $\Nnot$ such that $\nu_3\in \bigcap_{i=1}^k\{\nu\in \NN\mid\nu> \nu_{n_i}\}\subseteq \NN\setminus\{\nu_1,\nu_2\}$.
Indeed, suppose by way of contradiction that such a list exists.
Then for every $i\in \{1,\dots,k\}$,  $\nu_3\in \{\nu\in \NN\mid\nu> \nu_{n_i}\}$, i.e.\ $\nu_3>\nu_{n_i}$, that in turn implies.\ $n_i=1$.
Thus, 
\begin{equation*}
\bigcap_{i=1}^k\{\nu\in \NN\mid\nu> \nu_{n_i}\}=\bigcap_{i=1}^k\{\nu\in \NN\mid\nu> \nu_{1}\}=\bigcap_{i=1}^k\NN\setminus\{\nu_1\} = \begin{cases}
\NN \setminus \{\nu_1\} & \mbox{if } k \neq 0;\\
\NN							& \mbox{if } k = 0.
\end{cases}
\end{equation*}
This set contains the element $\nu_2$, whence it is not included in $\NN \setminus \{\nu_1, \nu_2\}$: a contradiction.

\item[$\tau_1\not\subseteq \tau_4$]
This statement follows from the previous one; here is a direct proof.
Set $A\coloneqq \{\nu\in\NN\mid \nu(2)>0 \}$ and notice that $A\in \tau_1$.
We claim that $A\notin \tau_4$.
The element $\nu_2$ belongs to $A$, in contrast we show that there is no finite list $n_1,\dots,n_k$ of elements of $\Nnot$ such that $\nu_2\in \bigcap_{i=1}^k\{\nu\in \NN\mid\nu> \nu_{n_i}\}\subseteq A$.
Indeed, suppose by way of contradiction that such a list exists.
Then for every $i\in \{1,\dots,k\}$, it holds that $\nu_2\in \{\nu\in \NN\mid\nu> \nu_{n_i}\}$, i.e.\ $\nu_2>\nu_{n_i}$, which in turn implies $n_i=1$.
Thus,
\begin{equation*}
\bigcap_{i=1}^k\{\nu\in \NN\mid\nu> \nu_{n_i}\}=\bigcap_{i=1}^k\{\nu\in \NN\mid\nu> \nu_{1}\}=\bigcap_{i=1}^k\NN\setminus\{\nu_1\} = \begin{cases}
\NN \setminus \{\nu_1\} & \mbox{if } k \neq 0;\\
\NN							& \mbox{if } k = 0.
\end{cases}
\end{equation*}
This set contains $\nu_3$, thus it is not included in $A$ (because $\nu_3 \notin A$): a contradiction.\qedhere
\end{description}
\end{proof}

\begin{remark}
By \cref{l:various-top}, for every $p\in \P$, every $k\in \N$ and every $n\in \Nnot$, the following subsets of $\NN$ are open in the topology $\tau_{1}$:
\begin{enumerate}
\item $\{\nu\in \NN\mid\nu(p)> k\}$;
\item $\{\nu\in \NN\mid\nu(p)\geq k\}$;
\item $\{\nu\in \NN\mid\nu\geq \nu_n\}$;
\item $\{\nu\in \NN\mid\nu> \nu_n\}$;
\item $\{\nu\in \NN\mid\nu\not\leq \nu_n\}$;
\item $\{\nu\in \NN\mid\nu\not< \nu_n\}$.
\end{enumerate}
\end{remark}

Recall that an ideal of a lattice is a subset that is downward closed and closed under finite joins. Notice that $\NN$ can be identified with the ``ideal-completion'' of $\Nnot$ with the divisibility order.  By \cite[Lemma 1.1 and 1.2]{Hoffmann:1979aa}, $\NN$ can be regarded as the ``sobrification'' of $\Nnot$ endowed with the Alexandrov topology induced by the divisibility order\footnote{We are indebted to A.\ Moshier for this remark.}.

The topology $\tau_{1}$ on $\NN$ is not Hausdorff. So $\NN$ with the identity function is not a multiset, which seems somewhat unnatural. However, $\NN$ can be made into a Boolean space by taking the ``patch topology'', i.e., taking as an open subbasis the family of subsets 
\[U_{p,k}\df\{\nu\in \NN\mid\nu(p)> k\} \text{ and } \NN\setminus U_{p,k}\df\{\nu\in \NN\mid\nu(p)\leq k\}\qquad\text{ for }p\in \P, k\in \N.\]
We let $\bar{\NN}$ denote the set $\NN$ with the patch topology.

The multiset $\bar{\NN}$ is not a regular co-generator, because it does not contain two distinct points of denominator $\nu_1$. However, $\bar{\NN}$ is ``almost a regular co-generator'' because, for every $p\in \P$ and $k \in \N$, it contains an element of denominator $\nu_{p^k}$, thus satisfying   \cref{i:exists} of \cref{l:char-reg-cogen}.
Moreover, one can show $\bar{\NN}$ to be regular injective. The set of multisets $\{\2, \bar{\NN}\}$ is then a regularly co-generating set of regular injective objects, and the multiset $\bar{\NN} \times \2$ is a regular injective regular co-generator object of $\MS$.


\subsection{Representation of co-subobjects}\label{ss:adjunction}

Recall that, given a multiset $X$, a \emph{co-subobject of $X$} is an equivalence class of epic arrows of multisets with domain $X$, where two epic arrows $f\colon X\epi Y$ and $g\colon X\epi Z$ are equivalent if there exists an iso $h \colon Y \to Z$ such that $g = h \circ f$. 
We denote with $\Quot(X)$ the set of co-subobjects of $X$ and we equip $\Quot(X)$ with a partial order defined as follows: the equivalence class of $f\colon X\epi Y$ is below the equivalence class of $g\colon X\epi Z$ if and only if there exists a (necessarily unique and epic) arrow $h \colon Y \to Z$ such that the following diagram commutes:
\[
\begin{tikzcd}
X \arrow[two heads]{r}{f} \arrow[swap, two heads]{rd}{g}& Y \arrow[dashed]{d}{h} \\
& Z
\end{tikzcd}
\]

Our next goal is to represent co-subobjects on $X$ internally on $X$.
We recall that this is possible for Boolean spaces thanks to the notion of \emph{Boolean relation}. 
\begin{definition} \label{d:Boolean-rel}
Let $R$ be an equivalence relation on a Boolean space $X$.
A subset $P$ of $X$ is said to be \emph{compatible} with $R$ if $P$ is a union of equivalence classes of $R$. In other words, if an element $x$ is in $P$, then the entire equivalence class of $x$ (modulo $R$) is included in $P$.
The equivalence relation $R$ is called \emph{Boolean} if for any two distinct equivalence classes of $R$, there is a clopen subset of $X$ that is compatible with $R$ and that includes one of the equivalence classes, but not the other.
\end{definition}
\begin{lemma}[\mbox{\cite[Lemma 1, Chapter 37]{givant2008introduction}}]\label{l:Boole-eq-rel}
Let $X$ be a Boolean space and $R$ be an equivalence relation on $X$.
The quotient space $X/R$ is Boolean if and only if the relation $R$ is Boolean.
\end{lemma}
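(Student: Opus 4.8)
The plan is to analyse the quotient map $\pi\colon X\to X/R$ and to transfer the combinatorics of $R$-compatible clopen sets in $X$ into the separation properties of $X/R$. First I would set up the basic dictionary between the two spaces. Since $X$ is compact and $\pi$ is a continuous surjection, $X/R$ is automatically compact, so (matching the definition of Boolean space as compact, Hausdorff, with a basis of clopens) the whole content of the statement concerns Hausdorffness and the existence of a clopen basis. The crucial observation is that the clopen subsets of $X/R$ correspond exactly to the $R$-compatible clopen subsets of $X$: if $C\subseteq X/R$ is clopen, then $\pi^{-1}[C]$ is clopen (by continuity) and is automatically a union of $R$-classes; conversely, if $P\subseteq X$ is clopen and compatible with $R$, then $\pi^{-1}[\pi[P]]=P$, so $\pi[P]$ is clopen because $\pi$ is a quotient map. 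Under this correspondence, the phrase ``a clopen $R$-compatible set contains one class but not the other'' becomes exactly ``a clopen subset of $X/R$ separates the two corresponding points''.

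For the direction ``$X/R$ Boolean $\Rightarrow R$ Boolean'', I would use the standard fact that a Boolean space is totally separated: given distinct points, Hausdorffness yields an open neighbourhood of the first missing the second, and the clopen basis refines it to a clopen set containing the first point but not the second. Concretely, given two distinct $R$-classes, viewed as distinct points $[x]\neq[y]$ of $X/R$, I would pick a clopen $D\subseteq X/R$ with $[x]\in D$ and $[y]\notin D$; then $\pi^{-1}[D]$ is a clopen $R$-compatible subset of $X$ containing the class of $x$ and disjoint from the class of $y$, which is precisely the Boolean condition on $R$.

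For the converse, assume $R$ is Boolean. By the dictionary above, the clopen subsets of $X/R$ separate points, so in particular $X/R$ is Hausdorff. The remaining task is to produce a basis of clopen sets, and this is the step where I expect the genuine work to lie: I would upgrade pointwise separation to a basis using compactness. Fix $[x]\in X/R$ and an open neighbourhood $V$ of $[x]$. The complement $X/R\setminus V$ is compact, and for each of its points $[z]$ there is a clopen $U_{[z]}$ with $[x]\in U_{[z]}$ and $[z]\notin U_{[z]}$, so the open sets $X/R\setminus U_{[z]}$ cover $X/R\setminus V$. Extracting a finite subcover and intersecting the corresponding $U_{[z]}$ yields a clopen set $C$ with $[x]\in C\subseteq V$. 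Hence $X/R$ is compact, Hausdorff, and zero-dimensional, i.e.\ a Boolean space.

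The two genuinely delicate points are the verification that $\pi$ carries $R$-compatible clopen sets to clopen sets (which rests on the identity $\pi^{-1}[\pi[P]]=P$ for saturated $P$ together with $\pi$ being a quotient map) and the compactness argument converting separation of points into a clopen basis; everything else is a routine translation through $\pi$.
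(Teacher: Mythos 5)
Your proof is correct, and since the paper offers no proof of its own for this lemma --- it is quoted verbatim from Givant--Halmos \cite[Lemma 1, Chapter 37]{givant2008introduction} --- there is nothing in the paper to diverge from; your argument (the correspondence between clopen subsets of $X/R$ and $R$-compatible clopen subsets of $X$ via $\pi^{-1}[\pi[P]]=P$ for saturated $P$, Hausdorffness from clopen separation, and the compactness argument upgrading point-separation to a clopen basis) is essentially the standard one found in that reference. All the delicate steps you flag are handled correctly, including the use of the quotient topology to push saturated clopens forward and the finite-subcover extraction to obtain $[x]\in C\subseteq V$.
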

Similarly, we will identify a co-subobject $f \colon X \epi Y$ with a pair $(\sim_f,\mu_f)$, where $\sim_f$ is an equivalence relation and $\mu_f$ is a function from $X$ to $\NN$, as follows.
\begin{definition}\label{n:les-f}
Given an epic (= surjective) arrow $f\colon X \to Y$ of multisets, we set
\begin{equation*}
{\sim_f}=\{(x_1,x_2)\in X\times X\mid f(x_1)= f(x_2)\}.
\end{equation*}
and we denote by $\mu_f\colon X\to \NN$ the composite $\zeta_Y \circ f$.
\end{definition}
The idea is that, up to an iso, an epic arrow $f$ can be recovered from $(\sim_f,\mu_f)$.
In order to establish an inverse for the assignment $f \mapsto (\sim_f,\mu_f)$, we identify the characterising properties of $(\sim_f,\mu_f)$.
\begin{definition}\label{d:comp-strucutre}
Given a multiset $X$, we call \emph{multiset relation on $X$} a pair $(\sim, \mu)$, such that
\begin{enumerate}
\item \label{d:comp-strucutre:item1} $\sim$ is a Boolean relation on $X$,
\item \label{d:comp-strucutre:item2} $\mu\colon X\to \NN$ is a continuous function such that $\mu \leq \zeta$, 
\item \label{d:comp-strucutre:item3} for all $x,y\in X$, if $x\sim y$, then $\mu(x)=\mu(y)$.
\end{enumerate}
\end{definition}
\begin{lemma}
If $f\colon X \to Y$ is an epic arrow of multisets, then $(\sim_f, \mu_f)$ is a multiset relation on $X$.
\end{lemma}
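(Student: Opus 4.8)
The plan is to verify directly the three defining conditions of a multiset relation from \cref{d:comp-strucutre}, noting first that $\sim_f$ is plainly an equivalence relation, being the kernel pair of the function $f$. Condition \cref{d:comp-strucutre:item3} is immediate: if $x \sim_f y$ then $f(x) = f(y)$ by definition, whence $\mu_f(x) = \zeta_Y(f(x)) = \zeta_Y(f(y)) = \mu_f(y)$. For condition \cref{d:comp-strucutre:item2}, the map $\mu_f = \zeta_Y \circ f$ is continuous as a composite of continuous functions, and since $f$ is an arrow of multisets it decreases denominators, so $\mu_f(x) = \zeta_Y(f(x)) \leq \zeta_X(x)$ for every $x \in X$; thus $\mu_f \leq \zeta_X$.

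The substance of the argument is condition \cref{d:comp-strucutre:item1}, namely that $\sim_f$ is a Boolean relation in the sense of \cref{d:Boolean-rel}, and I would establish this directly. Since $f$ is epic, it is surjective by \cref{l:arrows-in-MS}, so its equivalence classes are exactly the fibres $f^{-1}[\{y\}]$ for $y \in Y$, and two distinct classes correspond to two distinct points $y_1 \neq y_2$ of the Boolean space $Y$. As $Y$ is Hausdorff with a basis of clopen sets, I can choose a clopen $C \seq Y$ with $y_1 \in C$ and $y_2 \notin C$. Its preimage $f^{-1}[C]$ is clopen in $X$ by continuity of $f$, and it is compatible with $\sim_f$: if $x \in f^{-1}[C]$ and $x \sim_f x'$, then $f(x') = f(x) \in C$, so $x' \in f^{-1}[C]$. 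Finally $f^{-1}[\{y_1\}] \seq f^{-1}[C]$ while $f^{-1}[\{y_2\}]$ is disjoint from $f^{-1}[C]$, so $f^{-1}[C]$ is a clopen compatible set that contains one class and excludes the other. Hence $\sim_f$ is Boolean, and $(\sim_f, \mu_f)$ is a multiset relation.

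I do not expect a genuine obstacle here; the only point requiring care is to take the separating clopen from $Y$ and pull it back along $f$, which makes compatibility with $\sim_f$ automatic rather than something to be arranged by hand. As an alternative to this direct argument, one could observe that $f$, being a continuous surjection from a compact space to a Hausdorff space, is a closed and hence a quotient map, so that $Y$ is homeomorphic to $X/{\sim_f}$; since $Y$ is Boolean, \cref{l:Boole-eq-rel} would then yield that $\sim_f$ is Boolean. The direct route above avoids this detour and keeps the proof self-contained.
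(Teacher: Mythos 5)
Your proof is correct, and for the substantive condition it takes a different route from the paper. The paper disposes of condition \ref{d:comp-strucutre:item1} in one line by invoking \cref{l:Boole-eq-rel}: since $f$ is a continuous surjection from a compact space to a Hausdorff space, it is a closed map and hence a topological quotient map, so $Y$ is homeomorphic to $X/{\sim_f}$, and $Y$ being Boolean forces $\sim_f$ to be Boolean --- this is exactly the ``detour'' you sketch at the end as an alternative (note that the paper leaves the quotient-map step implicit, whereas your alternative spells it out). Your main argument instead verifies \cref{d:Boolean-rel} directly: the classes of $\sim_f$ are the fibres of $f$ (surjectivity, via \cref{l:arrows-in-MS}, guarantees every fibre is nonempty), two distinct classes come from distinct points $y_1 \neq y_2$ of $Y$, and pulling back a clopen separating $y_1$ from $y_2$ yields a clopen subset of $X$ that is automatically compatible with $\sim_f$ and contains one class while missing the other. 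What each approach buys: the paper's proof is shorter and reuses a lemma it has already stated for the representation machinery of that subsection, while yours is self-contained, avoids the appeal to \cref{l:Boole-eq-rel} (and the implicit Closed Map Lemma step needed to apply it), and makes visible why compatibility costs nothing --- preimages of sets in $Y$ are unions of fibres by construction. Your treatment of conditions \ref{d:comp-strucutre:item2} and \ref{d:comp-strucutre:item3} matches the paper's ``immediate'' with the details filled in.
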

\begin{proof}
\Cref{d:comp-strucutre:item1} in \cref{d:comp-strucutre} follows from \cref{l:Boole-eq-rel}.
\Cref{d:comp-strucutre:item2,d:comp-strucutre:item3} in \cref{d:comp-strucutre} are immediate.
\end{proof}
We will now illustrate how to recover an epic arrow $f$ from the multiset relation $(\sim, \mu)$.
\begin{definition}\label{r:sym}
Let $(X,\zeta_{X})$ be a multiset and $(\sim,\mu)$ be a multiset relation on $X$.
By \cref{l:Boole-eq-rel}, $X/{\sim}$ is a Boolean space.
Furthermore, define $\zeta_{\mu}\colon X/{\sim}\to \NN$ by setting $\zeta_{\mu}([x])=\mu(x)$.
It is immediate that this function is well defined and continuous, so we obtain an epic arrow of multisets $\pi_{\sim}\colon (X,\zeta_{X})\epi (X/{\sim},\zeta_{\mu})$.
\end{definition}
For $X$ a multiset, we let $\CS(X)$ denote the set of multiset relations on $X$.
We turn $\CS(X)$ into a partially ordered set by setting $({\sim_1}, \mu_1) \leq ({\sim_2}, \mu_2)$ if and only if ${\sim_1} \seq {\sim_2}$ and $\mu_1 \geq \mu_2$.

\begin{theorem}\label{p:bijection-Q-P}
The assignments
\begin{equation*}
\begin{tabular}{r c l c r c l}
$G\colon\Quot(X)$ & $ \longrightarrow$ & $\CS(X)$ &\hspace{1cm}  and \hspace{1cm}  &  $F\colon\CS(X)$ & $\longrightarrow$ & $\Quot(X)$\\
$(f\colon X\epi Y)$ & $ \longmapsto$ & ${(\sim_f,\mu_f)}$ &\hspace{1cm}    &  ${(\sim, \mu)}$ & $\longmapsto$ & $ ( \pi_{\sim}\colon (X,\zeta_{X})\epi (X/{\sim},\zeta_{\mu}))$
\end{tabular}
\end{equation*}
establish an isomorphism of partially ordered sets.
\end{theorem}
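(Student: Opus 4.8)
The plan is to verify that $F$ and $G$ are mutually inverse monotone maps; since an order-preserving bijection whose inverse is also order-preserving is exactly an isomorphism of posets, this will suffice. Well-definedness of both assignments has already been recorded (the preceding lemma shows $(\sim_f,\mu_f)$ is a multiset relation, and \cref{r:sym} produces the epic arrow $\pi_{\sim}$), so the work splits into three parts: $F\circ G = \mathrm{id}_{\Quot(X)}$, $G\circ F = \mathrm{id}_{\CS(X)}$, and compatibility with the two orders.

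For $F\circ G = \mathrm{id}$, I would start with an epic (hence surjective, by \cref{l:arrows-in-MS}) arrow $f\colon X\epi Y$ and compare $f$ with $\pi_{\sim_f}\colon X\epi X/{\sim_f}$. On underlying sets, the universal property of the set-quotient yields a unique bijection $\bar f\colon X/{\sim_f}\to Y$ with $f = \bar f\circ\pi_{\sim_f}$. Because $f$ is a continuous surjection between Boolean spaces it is a quotient map (Closed Map Lemma), so $\bar f$ is continuous; being a continuous bijection of compact Hausdorff spaces, it is a homeomorphism. Finally $\zeta_Y(\bar f([x])) = \zeta_Y(f(x)) = \mu_f(x) = \zeta_{\mu_f}([x])$, so $\bar f$ preserves denominators and is therefore an iso of multisets by \cref{l:arrows-in-MS}. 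Hence $\pi_{\sim_f}$ and $f$ represent the same co-subobject. The identity $G\circ F = \mathrm{id}$ is then a direct computation: for a multiset relation $(\sim,\mu)$, the fibres of $\pi_{\sim}$ are exactly the $\sim$-classes, so $\sim_{\pi_\sim} = {\sim}$, while $\mu_{\pi_\sim}(x) = \zeta_{\mu}([x]) = \mu(x)$, giving $\mu_{\pi_\sim}=\mu$.

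The core of the argument is the order correspondence, and I expect the backward implication to require the most care. I would prove the equivalence $[f]\leq[g] \Leftrightarrow G(f)\leq G(g)$, that is, that $g$ factors through $f$ via an epic arrow $h$ if and only if ${\sim_f}\seq{\sim_g}$ and $\mu_f\geq\mu_g$. If $g = h\circ f$, then $f(x_1)=f(x_2)$ forces $g(x_1)=g(x_2)$, giving ${\sim_f}\seq{\sim_g}$, while $\mu_g(x)=\zeta_Z(h(f(x)))\leq\zeta_Y(f(x))=\mu_f(x)$ since $h$ decreases denominators. Conversely, assuming ${\sim_f}\seq{\sim_g}$ and $\mu_f\geq\mu_g$, the inclusion ${\sim_f}\seq{\sim_g}$ lets me define a unique surjection $h\colon Y\to Z$ with $g=h\circ f$ at the level of sets; $h$ is continuous because $f$ is a quotient map, and it decreases denominators because for $y=f(x)$ we have $\zeta_Z(h(y))=\mu_g(x)\leq\mu_f(x)=\zeta_Y(y)$. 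Thus $h$ is an epic arrow of $\MS$ and $[f]\leq[g]$.

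This equivalence shows that $G$ is simultaneously monotone and order-reflecting; together with the two inverse identities it establishes that $G$, with inverse $F$, is an isomorphism of partially ordered sets. The only genuinely delicate points are the repeated appeals to the fact that a continuous surjection between Boolean spaces is a quotient map, which is what legitimises all the continuity claims for the induced maps $\bar f$ and $h$.
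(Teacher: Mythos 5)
Your proposal is correct and takes essentially the same approach as the paper: you verify $F\circ G=\mathrm{id}_{\Quot(X)}$ via the induced denominator-preserving homeomorphism $\bar f\colon X/{\sim_f}\to Y$, compute $G\circ F=\mathrm{id}_{\CS(X)}$ directly, and establish the order correspondence in both directions just as the paper does (the paper phrases the backward direction with the canonical quotients $X/{\sim_1}\to X/{\sim_2}$, which is equivalent to your formulation with arbitrary representatives once $F\circ G=\mathrm{id}$ is known). One small precision: continuity of $\bar f$ follows from the quotient topology on $X/{\sim_f}$ (since $\bar f\circ\pi_{\sim_f}=f$ is continuous), not from $f$ being a quotient map --- the latter is what yields continuity of $\bar f^{-1}$, though your compact-Hausdorff bijection argument covers that step in any case.
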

\begin{proof}
Let $f \colon X \to Y$ be an arrow of multisets. We want to prove that $f = F (G(f))=\pi_{\sim_{f}}$ in $\Quot(X)$.
It is sufficient to show that there is an isomorphism $\eps_f\colon X/{\sim_f}\to Y$ such that the following triangle commutes:
\[
\begin{tikzcd}
X \arrow{r}{\pi_{\sim_f}}\arrow[swap]{rd}{f}& X/{\sim_f}\arrow{d}{\eps_f}\\
& Y
\end{tikzcd}
\]
Define  $\eps_f$ by setting  $\eps_f([x])\df f(x)$. Observe that $x \sim_f y \Longleftrightarrow f(x) = f(y)$, for every $x, y \in X$.  Therefore, the function  $\eps_f$ is well defined and injective.  By definition, $f=\eps_f\circ \pi_{\sim_{f}}$.  Furthermore $\eps_f$ is surjective, for $f$ is so.
We now claim that $\eps_f$ is an arrow of multisets.
By definition of quotient topology, $\eps_f$ is continuous if and only if the composite $\eps_f\circ\pi_{\sim_{f}}$ is continuous; this holds because the composite is $f$.
By definition, for every $x\in X$, $\mu_{f}(x)=\zeta_Y(f(x))$; since the former is the denominator of $[x]$ and the latter is the denominator of the image of $[x]$ under $\eps_{F}$, the map $\eps_{f}$ preserves denominators.
Finally, since isomorphisms in $\MS$ are precisely the bijective denominator-preserving arrows (\cref{l:arrows-in-MS}), we conclude that $\eps_f$ is an isomorphism.

Let us now check that $(\sim, \mu)=G(F(\sim, \mu))=(\sim_{\pi_{\sim}}, \mu_{\pi_{\mu}})$.  By definition, for every $x, y \in X$,  $x \sim_{\pi_{\sim}} y$ if and only if $\pi_{\sim}(x) = \pi_{\sim}(y)$ if and only if $x\sim y$. Again a simple inspection of the definitions shows that, for every $x \in X$, 
\[
	\mu_{\pi_{\mu}}(x)=(\zeta_{\mu}\circ \pi_{\mu})(x) = \zeta_{\mu}(\pi_{\mu}(x))= \zeta_{\mu}([x]) = \mu(x),
\]
so that $\mu_{\pi_{\mu}} = \mu$.

We conclude the proof showing that the assignments in the statement preserve the order.
Suppose that $f\colon X\to Y$, $g\colon X\to Z$ are in $\Quot(X)$ and there exists an arrow $h \colon Y \to Z$  in $\MS$ such that  $h\circ f=g$. Then 
\[
	\mu_{g}=\zeta_{Z}\circ g=\zeta_{Z}\circ h\circ f\leq\zeta_{Y}\circ f=\zeta_{f},
\]
and if $f(x)=f(y)$ then also $g(x)=h(f(x))=h(f(y))=g(y)$.  Thus ${\mu_g}\leq{\mu_f}$ and ${\sim_f}\seq {\sim_{g}}$, which gives exactly $(\sim_f, \mu_f) \leq (\sim_g, \mu_g)$. 

Vice versa, suppose $(\sim_1, \mu_1), (\sim_2, \mu_2)$ are two multiset relations such that $(\sim_1, \mu_1) \leq (\sim_2, \mu_2)$. Thus, ${\sim_1}\seq {\sim_{2}}$ and ${\mu_2}\leq{\mu_1}$.  The former inclusion entails that the function $h\colon X/{\sim_{1}}\to X/{\sim_{2}}$ that maps $[x]_{\sim_{1}}$ to $[x]_{\sim_{2}}$ is well-defined and the latter entails that $h$ decreases denominators. Continuity follows from the obvious fact that $\pi_{2}=h\circ \pi_{1}$ and the definition of quotient topology. This concludes the proof.
\end{proof}

The representation of co-subobjects obtained in \cref{p:bijection-Q-P} is quite useful to study co-relations in $\MS$.
We illustrate this by characterising reflexive co-relations in $\MS$.  The next lemma re-proves \cref{l:char-co-refl} and shows that also the converse holds.

By \cref{p:bijection-Q-P}, we have a bijective correspondence between multisets relations on $X + X$ (i.e., elements of $\CS(X+X)$) and equivalence classes of co-relations on $X$ (i.e., elements of $\Quot(X+X)$).

Let us recall that a co-relation $(q_0,q_1) \colon X + X \to S$ is reflexive if the following diagram commutes:
\[
	\begin{tikzcd}
		{} 										& X+X \arrow[two heads,swap]{dl}{(q_0,q_1)} \arrow[two heads]{dr}{(1_X,1_X)}	& \\
		S \arrow[dashed]{rr}{\exists d}	& 																															& X
	\end{tikzcd}
\]
\begin{lemma}\label{l:char-refl}
	For any multiset $X$, a multiset relation $(\sim,\mu)$ on $X + X$ corresponds to a reflexive co-relation on $X$ if and only if the following hold:
\begin{enumerate}
\item $(x,i)\sim (y,j)\Longrightarrow x=y,$
\item $\mu(x,i)=\zeta_X(x).$
\end{enumerate}
\end{lemma}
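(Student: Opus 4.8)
The plan is to transport the problem through the order isomorphism of \cref{p:bijection-Q-P} and then to exhibit a single witnessing morphism. Under that isomorphism the multiset relation $(\sim,\mu)$ on $X+X$ represents the epic arrow $\pi_{\sim}$ onto the quotient $S\df (X+X)/{\sim}$ equipped with the denominator map $\zeta_{\mu}$, as in \cref{r:sym}. The two legs of the corresponding co-relation are $q_i\colon X\to S$, $q_i(x)=[(x,i)]_{\sim}$, and by construction $(x,i)\sim(y,j)$ holds iff $q_i(x)=q_j(y)$, while $\mu(x,i)=\zeta_S(q_i(x))$. Reading the definition of reflexivity through these identifications, the co-relation is reflexive precisely when there is an arrow $d\colon S\to X$ in $\MS$ with $d\circ q_0=d\circ q_1=1_X$. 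So I must show that (1) and (2) together are equivalent to the existence of such a $d$.

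For the forward direction I would simply invoke \cref{l:char-co-refl}, which is exactly this statement read through the dictionary above; for completeness I would reproduce its two short arguments. Applying $d$ to an identity $q_i(x)=q_j(y)$ and using $d\circ q_i=1_X$ yields $x=d(q_i(x))=d(q_j(y))=y$, which is (1). For (2), since $q_i$ decreases denominators we have $\zeta_S(q_i(x))\le\zeta_X(x)$, and since $d$ decreases denominators we have $\zeta_X(x)=\zeta_X(d(q_i(x)))\le\zeta_S(q_i(x))$; combining the two gives $\mu(x,i)=\zeta_S(q_i(x))=\zeta_X(x)$.

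For the converse, assuming (1) and (2), I would construct the required $d\colon S\to X$ by $d([(x,i)]_{\sim})\df x$. Condition (1) guarantees this is well defined, and then $d\circ q_i=1_X$ holds by construction. It remains to check that $d$ is a genuine arrow of $\MS$. Continuity is automatic: the composite $d\circ\pi_{\sim}$ is the codiagonal $(1_X,1_X)\colon X+X\to X$, $(x,i)\mapsto x$, which is continuous, so $d$ is continuous by the universal property of the quotient topology (the same reasoning used in \cref{r:sym}). For the denominator condition, (2) gives $\zeta_S([(x,i)]_{\sim})=\zeta_{\mu}([(x,i)])=\mu(x,i)=\zeta_X(x)=\zeta_X(d([(x,i)]_{\sim}))$, so $d$ in fact preserves denominators and hence decreases them. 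Thus $d$ is a legitimate morphism witnessing reflexivity.

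I do not expect a serious obstacle here: the substantive content is already packaged into \cref{p:bijection-Q-P,l:char-co-refl,r:sym}. The only point requiring genuine care is keeping the identifications between $(\sim,\mu)$ and the pair $(q_0,q_1)$ straight, and then verifying that the single map $d$ really is a morphism of multisets, which reduces to the two routine checks of continuity via the quotient topology and of the denominator inequality.
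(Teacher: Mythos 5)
Your proof is correct, but it takes a more hands-on route than the paper's. The paper never constructs the witnessing arrow $d$ at all: it observes that reflexivity of the co-relation is, by definition of the order on $\Quot(X+X)$, exactly the inequality $(q_0,q_1)\leq (1_X,1_X)$, transports this through the poset isomorphism of \cref{p:bijection-Q-P} to the inequality $({\sim},\mu)\leq(\approx,\eta)$ in $\CS(X+X)$, where $(\approx,\eta)$ is the multiset relation associated with the codiagonal $(1_X,1_X)$, and then simply unwinds the definition of the order: ${\sim}\subseteq{\approx}$ is condition (1), while $\mu\geq\eta$ combines with the standing constraint $\mu\leq\zeta_{X+X}=\eta$ to force $\mu=\eta$, which is condition (2). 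Your argument instead proves the equivalence at the level of morphisms: the forward direction reproduces \cref{l:char-co-refl} (legitimately, since that lemma is proved independently earlier, so there is no circularity), and your converse --- defining $d([(x,i)]_\sim)\coloneqq x$, checking well-definedness via (1), continuity via the quotient topology, and denominator preservation via (2) --- essentially re-derives, specialised to the codiagonal, the ``vice versa'' paragraph of the proof of \cref{p:bijection-Q-P}, where a morphism $h$ is built from an inequality of multiset relations. What each approach buys: yours is self-contained and exhibits the reflexivity witness explicitly, which is arguably more instructive; the paper's is shorter and demonstrates that the representation theorem, once established, settles such questions by pure order-theoretic bookkeeping with no further topology or denominator checks. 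All your individual steps are sound, including the identification $\mu(x,i)=\zeta_S(q_i(x))$ and the observation that $d$ preserves (hence decreases) denominators.
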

\begin{proof}
	Let $(\approx,\eta)$ be the relational structure associated with $(1_X, 1_X)$, i.e.
\begin{enumerate}
\item $(x,i) \approx (y,j) \Longleftrightarrow x = y$,
\item	$\eta(x, i) = \zeta_X(x)$.
\end{enumerate}
	By \cref{p:bijection-Q-P}, $({\sim}, \mu)$ is reflexive if and only if $({\sim}, \mu) \leq (\approx, \eta)$, i.e.\ ${\sim} \seq {\approx}$ and $\mu \geq \eta$, by definition of the partial order on $\CS(X)$.
	The condition ${\sim} \seq {\approx}$ is precisely the condition $(x,i)\sim (y,j) \Longrightarrow x=y$.
	
Finally, $\eta = \zeta_{X + X}$ and by definition of multiset relation it holds that $\mu \leq \zeta_{X+X}$; thus $\mu \leq \eta$ and the condition $\eta \leq \mu$ is equivalent to $\eta = \mu$, i.e. $\mu(x,i) = \zeta_X(x)$.
\end{proof}

\section*{Acknowledgements}
The research of both authors was supported by the Italian Ministry of University and Research through the PRIN project  n.\ 20173WKCM5 \emph{Theory and applications of resource sensitive logics}.


\end{document}